\documentclass[a4paper, 11pt]{article}

\usepackage[utf8]{inputenc}
\usepackage[T1]{fontenc}
\usepackage{geometry}
\usepackage[francais]{layout}
\usepackage{amsthm}
\usepackage{amsmath}
\usepackage{array,booktabs}
\usepackage{multirow}
\usepackage{array}
\usepackage{comment}
\usepackage{listings}
\usepackage{verbatim}
\usepackage[nottoc, notlof, notlot]{tocbibind}
\usepackage{textcomp}
\usepackage{lmodern}
\usepackage{amssymb}
\usepackage{fancyvrb}
\usepackage{graphicx}
\usepackage{enumerate}
\usepackage{varioref}
\usepackage[dvipsnames]{xcolor}
\usepackage{caption}
\usepackage{wrapfig}
\usepackage{tikz}
\usepackage{hyperref}
\hypersetup{
colorlinks=true,
linkcolor=RoyalBlue,
urlcolor=PineGreen,
citecolor=OliveGreen,
}
\usepackage[capitalise]{cleveref}
\usetikzlibrary{shapes.multipart}
\usetikzlibrary{positioning}
\usetikzlibrary{arrows}
\usepackage{subcaption}
\usepackage{comment}
\usepackage{epsfig}
\usepackage[
backend=biber,
style=alphabetic,
citestyle=authoryear,firstinits=true,abbreviate=true,isbn = false, url = false,doi=false,eprint=false, backend=bibtex
]{biblatex}
\renewbibmacro{in:}{}
\AtEveryBibitem{%
  \clearfield{month}%
}
\emergencystretch=1em

\usepackage{comment}

\newcommand{\bl}[1]{{\color{Black}{#1}}}

\addbibresource{biblio.bib}
    \usepackage{etoolbox}

    \makeatletter
    % d'apr\`es http://tex.stackexchange.com/q/69472
    \patchcmd{\lsthk@SelectCharTable}{%
      \lst@ifbreaklines\lst@Def{`)}{\lst@breakProcessOther)}\fi}{}{}{}

    \makeatother
    \setcounter{secnumdepth}{3}
    \setcounter{tocdepth}{3}
    \makeatletter
    \newcounter {subsubsubsection}[subsubsection]
    \renewcommand\thesubsubsubsection{\thesubsubsection .\@alph\c@subsubsubsection}
    \newcommand\subsubsubsection{\@startsection{subsubsubsection}{4}{\z@}%
                                         {-3.25ex\@plus -1ex \@minus -.2ex}%
                                         {1.5ex \@plus .2ex}%
                                         {\normalfont\normalsize\bfseries}}
    \newcommand\l@subsubsubsection{\@dottedtocline{3}{10.0em}{4.1em}}
    \newcommand{\subsubsubsectionmark}[1]{}
    \makeatother
\emergencystretch=1em
\newcommand\R{\mathbb{R}}

\newcommand{\e}{{\mathrm e}}

\newcounter{exo}
\setcounter{exo}{0}

\newcommand{\vertiii}[1]{{\left\vert\kern-0.25ex\left\vert\kern-0.25ex\left\vert #1 
    \right\vert\kern-0.25ex\right\vert\kern-0.25ex\right\vert}}
\renewcommand{\textbf}[1]{\begingroup\bfseries\mathversion{bold}#1\endgroup}
\newtheorem{prop}{Proposition}[section]
\labelformat{prop}{Proposition~#1}

\labelformat{formalprop}{Formal Proposition~#1}

\newtheorem{theorem}{Theorem}[section]

\newtheorem{lem}{Lemma}[section]

\newtheorem{rem}{Remark}[section]

\labelformat{cor}{Corollary~#1}
\labelformat{fig}{Figure~#1}

\addtolength{\textwidth}{1cm}
\addtolength{\oddsidemargin}{-0.5cm}
\addtolength{\textheight}{2.5cm}
\addtolength{\topmargin}{-1cm}

\pagestyle{plain}

\begin{document}
\title{Dynamics of dirac concentrations in the evolution of quantitative alleles with sexual reproduction}
\author{L. Dekens\footnote{Institut Camille Jordan, UMR5208 UCBL/CNRS, Universit\'e de Lyon, 69100 Villeurbanne, France. Email: \href{mailto:dekens@math.univ-lyon1.fr}{dekens@math.univ-lyon1.fr}.} and S. Mirrahimi\footnote{{\color{Black}Institut Montpelliérain Alexander Grothendieck, Univ Montpellier, CNRS, Montpellier, France. Email: \href{mailto:sepideh.mirrahimi@umontpellier.fr}{sepideh.mirrahimi@umontpellier.fr}.}}}
\date{\today}

\maketitle
\begin{abstract}
{\color{black}A proper understanding of the links between varying gene expression levels and complex trait adaptation is still lacking}, despite recent {\color{black}advances} in sequencing techniques leading to new insights on their importance in some evolutionary processes. This calls for extensions of the continuum-of-alleles framework first introduced by \textcite{Kimura731} that bypass the classical Gaussian approximation. Here, we propose a novel mathematical framework to study the evolutionary dynamics of quantitative alleles for sexually reproducing populations under natural selection and competition through an integro-differential equation. It involves a new reproduction operator {\color{black}which is nonlinear and nonlocal. This reproduction operator is different from the infinitesimal operator used in other studies with sexual reproduction because of different underlying genetic structures}. In an asymptotic regime where {\color{black}initially the population has a small phenotypic variance}, we analyse the long-term dynamics of the {\color{black}phenotypic distributions} according to the methodology of small variance (\cite{Diekmann_Jabin_Mischler_Perthame_2005}). {\color{black}In particular, we prove that the reproduction operator strains the limit distribution to be a product measure.} {\color{black} Under some assumptions on the limit equation, we show that the population remains monomorphic, that is the phenotypic distribution remains concentrated as a moving Dirac mass. Moreover, in the case of a monomorphic distribution, we derive a canonical equation describing the dynamics of the dominant alleles.}
\end{abstract}

\tableofcontents

\section{Introduction}

\subsection{Model and biological motivations.}

The development and popularization of sequencing techniques of the last twenty years has been leading to a greater understanding of regulatory mechanisms of gene expression levels and to new insights on their importance in evolutionary trajectories of complex traits (see the recent theory of degeneration of the Y chromosome \textcite{Lenormand_Fyon_Sun_Roze_2020}).  However, a complete picture of the relationship between varying gene expression levels and phenotypic adaptation is yet to be drawn (\cite{Romero_Ruvinsky_Gilad_2012}). To model varying gene expression levels on a trait under selection, one has to think of the effects of a gene as quantitative rather than discrete. One class of models that was motivated by a similar perspective stems from the reference study \textcite{Kimura731}: the continuum-of-alleles models in quantitative genetics, that assume that mutations produce always slightly new allelic effects, so that the allelic effect space is considered as continuous. The method indicated by \textcite{Kimura731} is adapted for asexual populations, or haploid sexual populations with only one locus contributing to the trait under quadratic stabilizing selection. Under these specific assumptions, \textcite{Kimura731} shows that the allelic effects are normally distributed under mutation-selection balance. Several studies (\cite{Latter_1970,Lande_1975}) extended the model to account for finite {\color{black}number of loci} with additive effects on the trait for sexual reproducing populations, still relying on the essential link between quadratic stabilizing selection and multivariate normal allelic distributions to derive quantitative information from their non-linear model. The aim of this paper is therefore to first study a quantitative genetics model that can account for polygenic traits under general {\color{black}selection functions} (not restricted to quadratic {\color{black}and considering situations where the alleles do not necessarily have additive effects}), in a sexually reproducing population regulated by competition for resources. 
More precisely, we are interested in the following integro-differential equation, where $\boldsymbol{t}\geq0$ denotes the time:

\begin{equation}
    \label{P_n_ante}
\tag{$P\left(\boldsymbol{n}\right)$}
    \begin{aligned}
    \begin{cases}
    \partial_{\boldsymbol{t}} \boldsymbol{n}(\boldsymbol{t},x,y) = \frac{r}{2}\,\left[\frac{\boldsymbol{\rho^Y}(\boldsymbol{t},x)\,\boldsymbol{\rho^X }(\boldsymbol{t},y)}{\boldsymbol{\rho}(\boldsymbol{t})}+\boldsymbol{n}(\boldsymbol{t},x,y)\right] - \left( m(x,y)+\kappa \, \boldsymbol{\rho}(\boldsymbol{t})\right)\,\boldsymbol{n}(\boldsymbol{t},x,y),\\
    \\
    \boldsymbol{\rho^X}(\boldsymbol{t},y) = \displaystyle\int_I \boldsymbol{n}(\boldsymbol{t},x',y)\,dx',\quad \boldsymbol{\rho^Y}(\boldsymbol{t},x) = \displaystyle\int_J \boldsymbol{n}(\boldsymbol{t},x,y')\,dy', \quad \boldsymbol{\rho}(t) = \displaystyle\int_{I\times J} \boldsymbol{n}(\boldsymbol{t},x',y')\,dx'\,dy',\\
    \\
    \boldsymbol{n}(0,x,y) = \boldsymbol{n^0}(x,y).
    \end{cases}
    \end{aligned}
\end{equation}

Here, $\boldsymbol{n}(t,x,y)$ denotes the allelic density of individuals of a haploid sexually reproducing population carrying the {\color{black}quantitative} alleles $x$ and $y$ at two unlinked loci of interest. {\color{black}The alleles $x$ and $y$ are taken in compact allelic spaces $I$ and $J$}. Individuals experience mortality by natural selection at a rate $m(x,y)\in C^1(I\times J)$ depending on their genotype $(x,y) \in I\times J$ and regulated by a uniform competition for resources with intensity $\kappa$. The first term in the r.h.s of \ref{P_n_ante} is the reproduction term, which translates how alleles are transmitted across generations under random mating \bl{at rate $r$}. According to Mendel's laws, \bl{there are two equiprobable configurations which lead to an offspring being born with $x$ and $y$ alleles. In the first configuration, each allele comes from a different parent, and the complementary alleles of both parents can be chosen arbitrarily, which results in the non-linear term involving the marginal contributions of each parent $\frac{\boldsymbol{\rho^Y}(\boldsymbol{t},x)\,\boldsymbol{\rho^X }(\boldsymbol{t},y)}{\boldsymbol{\rho}(\boldsymbol{t})}$. In the second configuration, both alleles come from the same parent and the other parent can be chosen arbitrarily in the population, which results in the simpler term $\boldsymbol{n}(\boldsymbol{t},x,y)$}. 

\begin{rem}[One-locus diploid population.]
\label{rem:diploid}
One can notice that up to setting $\tilde{m}:=m-\frac{r}{2}$, $\tilde{r} = \frac{r}{2}$, \ref{P_n_ante} also describes the dynamics of a population of diploid individuals (\bl{each individual has two alleles at each locus}) whose adaptation is determined by the two quantitative alleles $(x,y)$ carried at a single focal locus. The following equation was derived as deterministic limit of an individual-based model in \textcite{Collet_Méléard_Metz_2013}
\begin{equation}
    \label{P_n_ante_diploid}
\tag{$P_{\text{diploid}}\left(\boldsymbol{n}\right)$}
    \begin{aligned}
    \begin{cases}
    \partial_{\boldsymbol{t}} \boldsymbol{n}(\boldsymbol{t},x,y) = \tilde{r}\frac{\boldsymbol{\rho^Y}(\boldsymbol{t},x)\,\boldsymbol{\rho^X }(\boldsymbol{t},y)}{\boldsymbol{\rho}(\boldsymbol{t})} - \left( \tilde{m}(x,y)+\kappa \, \boldsymbol{\rho}(\boldsymbol{t})\right)\,\boldsymbol{n}(\boldsymbol{t},x,y),\\
    \\
    \boldsymbol{\rho^X}(\boldsymbol{t},y) = \displaystyle\int_I \boldsymbol{n}(\boldsymbol{t},x',y)\,dx',\quad \boldsymbol{\rho^Y}(\boldsymbol{t},x) = \displaystyle\int_J \boldsymbol{n}(\boldsymbol{t},x,y')\,dy', \quad \boldsymbol{\rho}(t) = \displaystyle\int_{I\times J} \boldsymbol{n}(\boldsymbol{t},x',y')\,dx'\,dy',\\
    \\
    \boldsymbol{n}(0,x,y) = \boldsymbol{n^0}(x,y).
    \end{cases}
    \end{aligned}
\end{equation} According to Mendel's laws, the copies $x$ and $y$ must be inherited {\color{black}each from a different parent} \bl{and the other copy of each parent can be chosen arbitrarily, which results in the same non-linear term as in the first configuration for the two-locus haploid case}. In the diploid case, $\tilde{r}$ is the reproduction rate and both the selection function $\tilde{m}$ and the initial genotypic density $\boldsymbol{n_0}$ are assumed symmetrical (requiring $I=J)$ (one can verify that the genotypic density $\boldsymbol{n}$ remains symmetrical at all times). {\color{black}All qualitative results will also be presented for this case in \cref{sec:formal}.}
\end{rem}

We place our analysis in an asymptotic regime where we consider that the initial distribution is concentrated, with a small variance $\varepsilon$ so that it is convenient to introduce the following transformation \bl{of the initial distribution}:
\begin{equation*}
    {\color{Black}\boldsymbol{n}^{0}} = \frac{e^{\frac{\boldsymbol{u^0_\varepsilon}}{\varepsilon}}}{\varepsilon}.
\end{equation*}
The motivation behind \bl{the latter} comes from a future project that will include mutations on the alleles with a small mutational variance of order {\color{black}$\varepsilon^2$}, \bl{which will allow the population to explore the allelic space beyond the support of the initial distribution (which it cannot do in the present model)}. Here, we expect that starting with an initial condition with such a small variance, the population density ${\color{Black}\boldsymbol{n}}$ solution of \ref{P_n_ante} would keep the same exponential form as above and would remain asymptotically concentrated with a small variance. Consequently, the dynamics of its mean, driven by natural selection with an intensity correlated to its variance, cannot be observed at shallow time scales, and \eqref{P_n_ante} needs to be adequately rescaled in order to explore long-term dynamics. To that effect, let us define the following rescaling in time:

\begin{equation*}
    t = \varepsilon \,\boldsymbol{t},\quad n_\varepsilon(t,\cdot,\cdot) = {\color{Black}\boldsymbol{n}}(\boldsymbol{t},\cdot,\cdot),\quad \rho^X_\varepsilon(t,\cdot) = {\color{Black}\boldsymbol{\rho^X}}(\boldsymbol{t},\cdot),\quad \rho^Y_\varepsilon(t,\cdot) = {\color{Black}\boldsymbol{\rho^Y}}(\boldsymbol{t},\cdot),\quad \rho_\varepsilon(t) = {\color{Black}\boldsymbol{\rho}}(\boldsymbol{t}).
\end{equation*}
Under the latter, the problem \eqref{P_n_ante} becomes, for $t\geq 0$, $(x,y) \in I\times J$:
\begin{equation}
\label{P_n}
\tag{$P\left(n_\varepsilon\right)$}
    \begin{aligned}
    \begin{cases}
    \varepsilon \,\partial_tn_\varepsilon(t,x,y) = \frac{r}{2}\,\left[\frac{\rho^Y_{\varepsilon}(t,x)\,\rho^X_{\varepsilon}(t,y)}{\rho_\varepsilon(t)} +n_\varepsilon(t,x,y)\right]- \left( m(x,y)+\kappa \, \rho_\varepsilon(t)\right)\,n_\varepsilon(t,x,y),\\
    \\
    \rho^X_\varepsilon(t,y) = \displaystyle\int_I n_\varepsilon(t,x',y)\,dx',\quad \rho^Y_\varepsilon(t,x) = \displaystyle\int_J n_\varepsilon(t,x,y')\,dy', \quad \rho_\varepsilon(t) = \displaystyle\int_{I\times J}\!\! n_\varepsilon(t,x',y')\,dx'\,dy',\\
    \\
    n_\varepsilon(0,x,y) = n^0_\varepsilon(x,y).
    \end{cases}
    \end{aligned}
\end{equation}
As we expect the density $n_\varepsilon$ to remain concentrated in our regime, the objective is to analytically describe the dynamics of the Dirac masses (ie. of the dominant alleles in the population), for various {\color{black}selection functions}.

\subsection{State of the art}

Integro-{\color{black}differential models} for quantitative genetics modelling the evolutionary dynamics of large sexually reproducing populations with selection have been on the rise recently, especially those that model the phenotypic trait inheritance according to the non-linear infinitesimal model introduced by \textcite{Fisher_1919} (\cite{Mirrahimi_Raoul_2013,Raoul_2017,Bourgeron_Calvez_Garnier_Lepoutre_2017,Calvez_Garnier_Patout_2019,Patout_2020,dekens_lavigne,dekens2020evolutionary,raoul2021exponential,dekens2021best}). According to the latter, the offspring's trait deviates from the mean parental trait according to a Gaussian kernel of fixed segregational variance. The classical interpretation is that the trait under consideration results from the combination of {\color{black} a large number of loci with small additive allelic effects} (\cite{Lange_1978,Bulmer_1980,Turelli_Barton_1994,Tufto_2000,Turelli_2017}), a framework rigorously justified in \textcite{Barton_Etheridge_Veber_2017}. In another study \textcite{perthame2021selectionmutation}, asymmetrical kernels are considered to model the effect of asymmetrical trait inheritance or fecundity on the asymptotic behaviour of the trait distribution. {\color{black}The present work also studies sexually reproducing populations, but the genetical framework  is different from the ones aforementioned: here, we consider that the allelic effects at the two loci are continuous and not necessarily small nor additive}.
\paragraph{Small variance methodology and long term-dynamics.}
We choose to place our study in the small variance methodology, introduced for quantitative genetics studies in \textcite{Diekmann_Jabin_Mischler_Perthame_2005} from a high-frequency method used in geometric optics. When the variance introduced by events of reproduction (by mutations, segregation...) is small compared to the reduction of diversity following natural selection, they propose to unfold Dirac singularities that are expected to arise using the so-called Hopf-Cole transform:

\begin{equation*}
    \boldsymbol{n}_{\varepsilon} = \frac{e^{\frac{\boldsymbol{u}_{\varepsilon}}{\varepsilon}}}{\varepsilon}.
\end{equation*}
The idea behind considering $u_\varepsilon$ instead of $n_\varepsilon$ stems from the fact that, when $\varepsilon$ vanishes, the limit $u$ (to be characterized) is expected to have more regularity than the (measure) limit $n$, making it more suitable for analysis. Moreover, $u$ would retain important quantitative information on the support of $n$. 

The small variance methodology has first been applied successfully to several quantitative genetics settings for \bl{asexually} reproducing populations in the regime of small variance of mutations:
adaptation to homogeneous environments \textcite{Perthame_Barles_2008,Barles_Mirrahimi_Perthame_2009}, to spatially heterogeneous environments \textcite{Mirrahimi_2017,Mirrahimi_Gandon_2020}, in a time-periodic environment \textcite{figueroa_mirrahimi}. Recently, it has been extended to quantitative genetics models for sexually reproducing populations characterized by complex traits inherited {\color{black}according to the aforementioned infinitesimal model} (\cite{Calvez_Garnier_Patout_2019,Patout_2020,dekens_lavigne,dekens2020evolutionary,dekens2021best}). However, the asymptotic analysis of this non-local, non-monotone, non-linear operator of reproduction presents great analytical challenges, and it has only been rigorously derived in a model for homogeneous environments (\cite{Calvez_Garnier_Patout_2019, Patout_2020}). {\color{black}The same methodology is used in \textcite{perthame2021selectionmutation} to study the asymptotic behaviour of the trait distribution under asymmetrical reproduction kernels. Here, as described above, our genetical framework differs significantly from the infinitesimal model's one. Therefore, it yields a different reproduction operator (see \cref{P_n}), which} is in fact {\color{Black}closer} to the ones used for asexual populations (\cite{Perthame_Barles_2008,Barles_Mirrahimi_Perthame_2009}), \bl{since integrating the reproduction term in \ref{P_n} with regard to $x$ or $y$ results in the same reproduction term as with clonal reproduction with a single trait and no mutations (.} However, here, the nonlinear nonlocal term describing the reproduction operator \bl{along with the fully general bivariate selection function $m$} still lead to new difficulties to be \bl{overcome}.

Let us then consider $\left(\boldsymbol{u^0_\varepsilon}\right)_{\varepsilon>0}$ a sequence in $C^1\left(I\times J\right)$, uniformly bounded when $\varepsilon$ vanishes. It defines subsequently a sequence of concentrated initial genotypic densities with decreasingly small variance (Hopf-Cole transform):

\begin{equation}
    \boldsymbol{n^0_\varepsilon} = \frac{e^{\frac{\boldsymbol{u^0_\varepsilon}}{\varepsilon}}}{\varepsilon}.
    \label{eq:IC_HC}
\end{equation}
Let us define $\boldsymbol{n}_\varepsilon$ \bl{the} solution of \eqref{P_n_ante} with initial distribution $\bl{\boldsymbol{n}^0_{\varepsilon}}$, and $\boldsymbol{u}_\varepsilon$ similarly as above:

\begin{equation*}
    \boldsymbol{n}_{\varepsilon} = \frac{e^{\frac{\boldsymbol{u}_{\varepsilon}}{\varepsilon}}}{\varepsilon}.
\end{equation*}
We expect indeed that starting with such an initial condition \eqref{eq:IC_HC}, the population density $\boldsymbol{n}_{\varepsilon}$ would keep the same exponential form and would remain asymptotically concentrated with a small variance. Consequently, the dynamics of its mean, driven by natural selection with an intensity correlated to its variance, cannot be observed at shallow time scales, and \eqref{P_n_ante} needs to be adequately rescaled in order to explore long term dynamics.

Moreover, in order to study the asymptotic properties of $n_\varepsilon$, we align with \textcite{Perthame_Barles_2008,Barles_Mirrahimi_Perthame_2009,Mirrahimi_2017,Mirrahimi_Gandon_2020}, and introduce the derived problem on $u_\varepsilon := \varepsilon \log\left(\varepsilon\,n_\varepsilon\right)$:
\begin{equation}
\begin{aligned}
\begin{cases}
\partial_t\,u_\varepsilon(t,x,y) = \bl{\frac{r}{2}\nu_\varepsilon(t,x,y)}- \left(m(x,y)+\kappa \,\rho_\varepsilon(t)-\frac{r}{2}\right),\\
u_\varepsilon(0,\cdot,\cdot) = u^0_\varepsilon,\\
\rho_\varepsilon = \displaystyle\iint_{I\times J} \frac{1}{\varepsilon} \exp\left[{\frac{u_\varepsilon(x',y')}{\varepsilon}}\right] dx'\,dy',
\end{cases}
\end{aligned}
\label{P_u}
\tag{$P_{u_\varepsilon}$}
\end{equation}
\bl{where
\[\nu_\varepsilon(t,x,y) := \frac{\rho^X_\varepsilon(t,y)\,\rho^Y_\varepsilon(t,x)}{n_\varepsilon(t,x,y)\rho_\varepsilon(t)}=\frac{1}{\rho_\varepsilon(t)} \displaystyle\iint_{I\times J} \frac{1}{\varepsilon}\,\exp\left[{\frac{u_\varepsilon(t,x,y')+u_\varepsilon(t,x',y)-u_\varepsilon(t,x,y)}{\varepsilon}}\right]\bl{dx'\,dy'}.\]}
\subsection{Assumptions}

We assume that the selection term $m$ satisfies the following \bl{regularity and technical bound}:
\begin{equation*}
\tag{H1}
{\color{black}m \in C^1(I\times J, \R_+), \quad  4\,\|m\|_\infty < r.}   
\label{hyp:m}
\end{equation*}
For $\varepsilon>0$, let $u^0_{\varepsilon}\in C^1(I\times J)$ be such that:
\begin{equation*}
\exists M>0,\quad \forall \varepsilon \leq 1,\quad
    \left\|u^0_{\varepsilon}\right\|_{W^{1,\infty}(I\times J)}\leq M.
    \tag{H2}
    \label{hyp:u0W1infty}
\end{equation*}
Then we define the initial state by
\begin{equation*}
    n^0_{\varepsilon} = \frac{e^{\frac{u^0_{\varepsilon}}{\varepsilon}}}{\varepsilon}.
\end{equation*}
Let us define the following uniform bounds:
\[\rho_0^- := \frac{r-\|m\|_\infty}{\kappa}, \quad \rho_0^+ := \frac{r}{\kappa}.\]
We assume that the initial size of population is bounded uniformly by $\rho_0^-$ and $\rho_0^+$:
\begin{equation*}
\tag{H3}
 \forall \varepsilon>0,\quad \rho_\varepsilon^0 := \iint_{I\times J} n^0_\varepsilon(x,y)\,dx\,dy \;\in \left]\rho_0^-,\rho_0^+\right[.
 \label{hyp:rho0unifbound}
\end{equation*}
Next, to prepare \ref{prop:nu}, we assume that there exists $0<\nu_m\leq 1-4\frac{\|m\|_\infty}{r}<1+4\frac{\|m\|_\infty}{r}\leq\nu_M$ such that:
\begin{equation*}
   \forall\varepsilon,\;\forall (x,y)\in I\times J,\quad \nu_m\leq \nu_\varepsilon^0(x,y) : = \frac{\rho_\varepsilon^{X,0}(y)\,\rho_\varepsilon^{Y,0}(x)}{n_\varepsilon^0(x,y)\,\rho_\varepsilon^0}\leq \nu_M.
    \tag{H4}
    \label{hyp:nu0}
\end{equation*}

\subsection{Presentation of the results and outline}

First, we show some preliminary results of well-posedness of \ref{P_n}:
\begin{theorem}
\label{thm:well_posedness}
Under the assumption \ref{hyp:rho0unifbound}, \eqref{P_n} has a unique solution with positive values $n_\varepsilon$ in \bl{$C^1(\R_+\times I\times J)$}. \bl{Moreover}, we have for all $\varepsilon$:
\[\forall t \in \R_+,\quad \rho_0^-\leq\rho_\varepsilon(t) \leq \rho_0^+.\]
Hence, for all $T>0$, $\left(n_\varepsilon\right)$ converges along subsequences in $L^\infty(w^*-[0,T],M(I\times J))$ toward a measure $n$ when $\varepsilon$ vanishes \bl{(where $M(I\times J)$ stands for the set of Radon measures equipped with the total variation norm)}.
\end{theorem}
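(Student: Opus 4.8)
The plan is to exploit that \eqref{P_n} carries no derivatives in $(x,y)$: at each fixed allele pair it is an ODE in $t$, the only coupling being through the non-local quantities $\rho_\varepsilon,\rho^X_\varepsilon,\rho^Y_\varepsilon$. I would therefore start from the Duhamel representation
\begin{equation*}
n_\varepsilon(t,x,y) = n^0_\varepsilon(x,y)\,e^{\frac1\varepsilon\int_0^t\left(\frac r2-m(x,y)-\kappa\rho_\varepsilon(s)\right)ds} + \frac{r}{2\varepsilon}\int_0^t e^{\frac1\varepsilon\int_s^t\left(\frac r2-m(x,y)-\kappa\rho_\varepsilon(\sigma)\right)d\sigma}\,\frac{\rho^Y_\varepsilon(s,x)\,\rho^X_\varepsilon(s,y)}{\rho_\varepsilon(s)}\,ds,
\end{equation*}
and set up a Banach fixed point for the map sending $n\in C\bigl([0,T];C^1(I\times J)\bigr)$ to the right-hand side above, with $\rho,\rho^X,\rho^Y$ recomputed from $n$. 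Here $n^0_\varepsilon=e^{u^0_\varepsilon/\varepsilon}/\varepsilon\in C^1(I\times J)$ and is bounded below by a positive constant on the compact $I\times J$, so on a small ball around the constant-in-time function $n^0_\varepsilon$ one keeps $\rho[n](t)$ bounded away from $0$; there the factor $1/\rho$ is Lipschitz, the map contracts for $T=T(\varepsilon)$ small, and one obtains a unique local $C^1$ solution. Positivity is read directly off the formula: the exponential prefactor multiplies the positive datum $n^0_\varepsilon$ and the integral term is non-negative.

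Next I would prove the a priori bounds on the total mass. Integrating \eqref{P_n} over $I\times J$ and using $\iint_{I\times J}\tfrac{\rho^Y_\varepsilon(t,x)\rho^X_\varepsilon(t,y)}{\rho_\varepsilon(t)}\,dx\,dy=\rho_\varepsilon(t)$ and $\iint n_\varepsilon=\rho_\varepsilon$, one gets the logistic-type relation
\begin{equation*}
\varepsilon\,\rho_\varepsilon'(t) = r\,\rho_\varepsilon(t) - \iint_{I\times J} m(x,y)\,n_\varepsilon(t,x,y)\,dx\,dy - \kappa\,\rho_\varepsilon(t)^2 ,
\end{equation*}
whence, since $0\le m\le\|m\|_\infty$ and $n_\varepsilon\ge 0$,
\begin{equation*}
\kappa\,\rho_\varepsilon(t)\bigl(\rho_0^- - \rho_\varepsilon(t)\bigr)\ \le\ \varepsilon\,\rho_\varepsilon'(t)\ \le\ \kappa\,\rho_\varepsilon(t)\bigl(\rho_0^+ - \rho_\varepsilon(t)\bigr).
\end{equation*}
Comparison with the two scalar logistic ODEs, whose attracting equilibria are exactly $\rho_0^-$ and $\rho_0^+$ (both positive by \ref{hyp:m}), together with \ref{hyp:rho0unifbound} (so that $\rho^0_\varepsilon\in(\rho_0^-,\rho_0^+)$), yields $\rho_0^-\le\rho_\varepsilon(t)\le\rho_0^+$ on the whole existence interval, uniformly in $\varepsilon$.

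Then I would globalize and conclude. As long as $\rho_\varepsilon$ stays in $[\rho_0^-,\rho_0^+]$ — which the previous step guarantees on the whole maximal interval — the coefficients in the Duhamel formula are uniformly controlled, the formula is affine in $n_\varepsilon$, and, differentiating it in $x$ and $y$ (legitimate since $m\in C^1$, $n^0_\varepsilon\in C^1$, and the marginals are $C^1$), a Grönwall estimate bounds $\|n_\varepsilon(t)\|_{C^1(I\times J)}$ finitely on each $[0,T]$. Hence no norm blows up in finite time, the maximal existence time is $+\infty$, the solution lies in $C^1(\R_+\times I\times J)$ ($C^1$ in $t$ following from the equation once continuity is known), is positive, and is unique (contraction applied on successive intervals). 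Finally, $\|n_\varepsilon(t,\cdot,\cdot)\|_{L^1(I\times J)}=\rho_\varepsilon(t)\le\rho_0^+$ for every $t$ and $\varepsilon$, so $(n_\varepsilon)$ is bounded in $L^\infty\bigl([0,T];M(I\times J)\bigr)$; since $M(I\times J)$ is the dual of the separable space $C(I\times J)$, this space is the dual of $L^1\bigl([0,T];C(I\times J)\bigr)$, and Banach--Alaoglu provides a weak-$*$ convergent subsequence with measure-valued limit $n$.

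The point requiring the most care — rather than being genuinely deep — is the interplay between the fixed-point/continuation argument and the lower bound on $\rho_\varepsilon$: the local theory must be run in a region where $\rho_\varepsilon$ is bounded away from zero so that the non-local term $\tfrac{\rho^Y_\varepsilon\rho^X_\varepsilon}{\rho_\varepsilon}$ is harmless, and one has to check that the a priori estimate $\rho_\varepsilon\ge\rho_0^->0$ is indeed available on the interval produced by the local argument, so that the bootstrap to global existence closes. The propagation of the $C^1$ bounds in $(x,y)$ (differentiating under the integral sign and a vector-valued Grönwall argument) is the other mildly technical step.
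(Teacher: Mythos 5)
Your proposal is correct and follows essentially the same route as the paper: the paper likewise treats \eqref{P_n} as an ODE in the Banach space $C^1(I\times J)$ (local Cauchy--Lipschitz with a locally Lipschitz right-hand side, which is the abstract form of your Duhamel fixed point), globalizes via the same logistic comparison for $\rho_\varepsilon$ and Gr\"onwall bounds on $n_\varepsilon$, $\partial_x n_\varepsilon$, $\partial_y n_\varepsilon$, and concludes weak-$*$ compactness from the uniform $L^1$ bound. The only point to tighten is the positivity claim, which as stated is mildly circular (non-negativity of the integral term uses non-negativity of the marginals); it is fixed by taking the fixed-point ball of radius smaller than $\min n^0_\varepsilon$, or by a continuation-of-positivity argument.
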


We recall that we expect $n_\varepsilon$ to concentrate as $\varepsilon$ vanishes. As such, we expect the weak limit $n$ to be a sum of Dirac masses. The aim of this paper is to determine where $n$ is supported, that is to determine which alleles become dominant in the population. To study the asymptotic properties of $n$, it is more convenient to shift the asymptotic analysis from $n_\varepsilon$ on $u_\varepsilon = \varepsilon \log\left(\varepsilon\,n_\varepsilon\right)$. Consequently, the main result of this paper {\color{black}focuses} on the asymptotic behaviour of $u_\varepsilon$:

\begin{theorem}
Under the assumptions \ref{hyp:u0W1infty}-\ref{hyp:nu0}, for all $T>0$, $u_\varepsilon \underset{\varepsilon \to 0}{\longrightarrow} u$ in $C^0([0,T]\times I\times J)$ (along subsequences). Additionally, $u$ satisfies the following properties:
\begin{enumerate}
\item[(i)] $u$ is Lipschitz continuous,
    \item[(ii)] $u$ is non-positive and satisfies an additive separation of variables property:
{\color{black}\begin{equation}
\label{eq:u_additive}
    \forall (t,x,y) \in [0,T]\times I\times J,\quad u(t,x,y) = u^Y(t,x) + u^X(t,y):=\max u(t,x,\cdot) + \max u(t,\cdot,y).
\end{equation}}
Furthermore, we have at all time $t$: $\max u^Y(t,\cdot) = \max u^X(t,\cdot) = 0$.
\item[(iii)] $n(t,\cdot,\cdot)$ is supported at the zeros of $u(t,\cdot,\cdot)$ {\color{black}for} a.e. $t$:
    \begin{align*}
        \operatorname{supp}(n(t,\cdot,\cdot)) &\subset \{(x,y)\,|\,u(t,x,y) =0\}\\
        &=\{x\,|\,u^Y(t,x) =0\}\times\{y\,|\,u^X(t,y) =0\} .
    \end{align*}
    \item[(iv)] $u^X$ (resp. $u^Y)$ satisfies the following limit equation {\color{black}for} a.e. $y$:
    \begin{equation}
    \label{eq:limit_equation}
    \forall t \in [0,T]\quad u^X(t,y) = u^X(0,y) + r\,t-\kappa\,\int_0^t\rho(s)\,ds - \int_0^t\left\langle \phi^X(t,\cdot,y),m(\cdot,y)\right\rangle\,ds,
\end{equation}
    where \bl{$\rho = \langle n,\mathbf{1}_{I\times J}\rangle\in L^\infty([0,T])$} and $\phi^X$ is the limit of $\frac{n_\varepsilon}{\rho_\varepsilon^X}$ in $L^\infty\left(w^*-[0,T]\times I, M(I)\right)$. Moreover, {\color{black}for a.e. (t,y)
    \begin{equation*}
        \operatorname{supp}\left(\phi^X(t,\cdot,y)\right) =\{x\,|\,u^Y(t,x) =0\}.
    \end{equation*}}
\end{enumerate}
\label{thm:convergence_u}
\end{theorem}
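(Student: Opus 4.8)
I would run the small-variance (Hopf--Cole) programme of \textcite{Perthame_Barles_2008,Barles_Mirrahimi_Perthame_2009}, the genuinely new point being that the nonlocal reproduction term is what forces the additive splitting \eqref{eq:u_additive}. \emph{Compactness.} From \cref{thm:well_posedness} we already have $\rho_0^-\le\rho_\varepsilon\le\rho_0^+$ on $[0,T]$, and \cref{prop:nu} gives $\nu_m\le\nu_\varepsilon\le\nu_M$ uniformly; inserting these into \eqref{P_u} bounds $|\partial_t u_\varepsilon|$ uniformly, which together with \eqref{hyp:u0W1infty} yields a uniform $L^\infty$ bound and time-Lipschitz bound on $u_\varepsilon$ over $[0,T]\times I\times J$. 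For spatial regularity I would differentiate \eqref{P_u} in $x$ (resp.\ $y$), control $\partial_x\nu_\varepsilon$ through the integral formula for $\nu_\varepsilon$ in terms of $\nabla u_\varepsilon$, and close a differential inequality on $\|\nabla u_\varepsilon(t,\cdot,\cdot)\|_\infty$ with initial datum bounded by \eqref{hyp:u0W1infty}, giving a uniform $W^{1,\infty}$ bound. Arzelà--Ascoli then provides $u_\varepsilon\to u$ in $C^0([0,T]\times I\times J)$ along a subsequence, with $u$ Lipschitz and (along a further subsequence) $u^0_\varepsilon\to u(0,\cdot,\cdot)$ uniformly; this is the stated convergence and (i).

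\emph{Sign and splitting ((ii), (iii)).} The identity $\max_{I\times J}u(t,\cdot,\cdot)=0$ is read off $\rho_\varepsilon(t)=\iint\varepsilon^{-1}e^{u_\varepsilon/\varepsilon}$: a positive value of $u$ somewhere would blow $\rho_\varepsilon$ up (contradicting $\rho_\varepsilon\le\rho_0^+$), a negative maximum would send $\rho_\varepsilon\to0$ (contradicting $\rho_\varepsilon\ge\rho_0^-$). For the splitting I would apply Laplace's method to the integral defining $\nu_\varepsilon$: since $u_\varepsilon\to u$ uniformly,
\[
\varepsilon\log\big(\rho_\varepsilon(t)\,\nu_\varepsilon(t,x,y)\big)\ \longrightarrow\ \max_{y'}u(t,x,y')+\max_{x'}u(t,x',y)-u(t,x,y)=u^Y(t,x)+u^X(t,y)-u(t,x,y),
\]
where $u^Y(t,x):=\max_y u(t,x,y)$ and $u^X(t,y):=\max_x u(t,x,y)$; but $\rho_\varepsilon\nu_\varepsilon$ is squeezed between two positive constants, so the left-hand side tends to $0$, which is \eqref{eq:u_additive}. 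Feeding the splitting back into $u^Y(t,x)=\max_y u(t,x,y)$ forces $\max u^X(t,\cdot)=\max u^Y(t,\cdot)=0$. For (iii): on any compact subset of $\{u(t,\cdot,\cdot)<0\}$ one has $n_\varepsilon\to0$ uniformly, hence for a.e.\ $t$ (Lebesgue points of the weak-$\ast$ limit) $\operatorname{supp}n(t,\cdot,\cdot)\subset\{u(t,\cdot,\cdot)=0\}$, and since $u^Y,u^X\le0$ this zero set factorises as $\{u^Y(t,\cdot)=0\}\times\{u^X(t,\cdot)=0\}$.

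\emph{Limit equation (iv).} Integrating the first line of \eqref{P_n} over $x\in I$ and using $\int_I\rho^Y_\varepsilon(t,x)\,dx=\rho_\varepsilon(t)$, the reproduction terms collapse to $r\,\rho^X_\varepsilon(t,y)$, so that
\[
\varepsilon\,\partial_t\log\rho^X_\varepsilon(t,y)=r-\kappa\,\rho_\varepsilon(t)-\big\langle\phi^X_\varepsilon(t,\cdot,y),\,m(\cdot,y)\big\rangle,\qquad \phi^X_\varepsilon(t,\cdot,y):=\frac{n_\varepsilon(t,\cdot,y)}{\rho^X_\varepsilon(t,y)}\in\mathcal P(I).
\]
I would integrate this in $t$ and pass to the limit term by term: Laplace's method gives $\varepsilon\log\rho^X_\varepsilon(t,y)\to\max_x u(t,x,y)=u^X(t,y)$ pointwise (at $t=0$ using the convergence $u^0_\varepsilon\to u(0,\cdot,\cdot)$); $\rho_\varepsilon\rightharpoonup\rho=\langle n,\mathbf{1}_{I\times J}\rangle$ handles the $\kappa$-term; and $\phi^X_\varepsilon$, uniformly bounded of total mass $1$, converges weak-$\ast$ along a further subsequence to $\phi^X$, so $\int_0^t\langle\phi^X_\varepsilon(\cdot,\cdot,y),m(\cdot,y)\rangle\to\int_0^t\langle\phi^X(\cdot,\cdot,y),m(\cdot,y)\rangle$ for a.e.\ $y$ (matching this weak-$L^1$ limit with the pointwise limit forced by the other two terms). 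As both sides of \eqref{eq:limit_equation} are Lipschitz in $t$ for fixed $y$, the identity --- a priori for a.e.\ $y$ and rational $t$ --- extends to all $t$. Finally $\operatorname{supp}\phi^X(t,\cdot,y)\subset\{u^Y(t,\cdot)=0\}$ follows from $u_\varepsilon(t,\cdot,y)-\varepsilon\log\rho^X_\varepsilon(t,y)\to u^Y(t,\cdot)$ uniformly (using the splitting) and $\max u^Y(t,\cdot)=0$; for the reverse inclusion I would seek a matching lower bound from the normalisation $\int_I\phi^X_\varepsilon(t,\cdot,y)\,dx=1$ combined with the uniform Lipschitz control of $u_\varepsilon$ in $x$.

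The real technical weight lies upstream of this statement: the uniform two-sided control of the nonlinear, nonlocal ratio $\nu_\varepsilon$ (\cref{prop:nu}) and the uniform $W^{1,\infty}$ estimate on $u_\varepsilon$, both of which hinge on the structural bound $4\|m\|_\infty<r$ in \eqref{hyp:m}. Within the present argument, the subtle step is the reverse inclusion for $\operatorname{supp}\phi^X$: uniform convergence of the $u_\varepsilon$ alone does not by itself prevent the Gibbs measures $\phi^X_\varepsilon$ from collapsing onto a proper subset of $\{u^Y(t,\cdot)=0\}$, so one genuinely has to quantify how spread out they remain.
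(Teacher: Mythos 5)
Your plan follows the paper's own route almost step for step: uniform $L^\infty$, time- and space-Lipschitz bounds plus Arzel\`a--Ascoli for the convergence; Laplace's method applied to the integral defining $\nu_\varepsilon$, squeezed between $\nu_m$ and $\nu_M$ (\cref{prop:nu}), to force $u=u^X+u^Y$; the bounds $\rho_0^-\le\rho_\varepsilon\le\rho_0^+$ to pin $\max u=0$; local uniform smallness of $n_\varepsilon$ on $\{u<0\}$ for (iii); and, for (iv), integrating the equation on $\rho^X_\varepsilon$, proving $\varepsilon\log\rho^X_\varepsilon\to u^X$ (the paper gets uniform convergence, using a zero $x_0(t)$ of $u^Y(t,\cdot)$ and the Lipschitz bound to control $\varepsilon\log\int_I e^{u^Y/\varepsilon}dx$), and identifying the $m$-term through the weak-$*$ limit of $\phi^X_\varepsilon$ tested against $\mathbf{1}_{[0,t]}(s)f(y)m(x,y)$. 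One caution on the spatial Lipschitz step: "closing a differential inequality on $\|\nabla u_\varepsilon\|_\infty$" only works if you exploit the sign of the nonlocal term at an extremum of $\partial_x u_\varepsilon$ over $I\times J$ (the factor $\partial_x u_\varepsilon(x_0,y')-\partial_x u_\varepsilon(x_0,y_0)\le 0$ kills the $O(1/\varepsilon)$ contribution); a crude Gronwall bound on the sup-norm would produce $e^{CT/\varepsilon}$ and be useless. This is exactly what the paper's comparison functions $w_\varepsilon$ and $v_\varepsilon$ in the proof of \ref{prop:uniform_estimates_u} do.

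The one genuine gap is the last claim of (iv), which you yourself flag: you only obtain $\operatorname{supp}\phi^X(t,\cdot,y)\subset\{u^Y(t,\cdot)=0\}$ and have no argument preventing the Gibbs measures from collapsing onto a proper subset. The paper closes this with a tool you already have in hand: since $\nu_\varepsilon=\frac{\rho^X_\varepsilon\rho^Y_\varepsilon}{n_\varepsilon\rho_\varepsilon}$, one has $\phi^X_\varepsilon=\frac{n_\varepsilon}{\rho^X_\varepsilon}=\frac{\rho^Y_\varepsilon}{\nu_\varepsilon\,\rho_\varepsilon}$, so \cref{prop:nu} and the bounds on $\rho_\varepsilon$ sandwich $\phi^X_\varepsilon$ between fixed positive multiples of $\rho^Y_\varepsilon$; passing to the limit against nonnegative test functions shows that $\phi^X(t,\cdot,y)$ and $\rho^Y(t,\cdot)$ have the same support for a.e.\ $(t,y)$ --- in particular a support independent of $y$, which is precisely what is needed later in \ref{prop:yield_monomorphism} --- and item (iii) then places this common support in the zero set of $u^Y(t,\cdot)$. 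So no quantitative "spreading" estimate on $\phi^X_\varepsilon$ is needed; the two-sided control of $\nu_\varepsilon$ does the job, and you should substitute this comparison for your tentative lower-bound argument.
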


The second and third point of the results in \cref{thm:convergence_u} highlight the originality of this problem: the limit $u$ separates the variables additively and therefore, the limit measure $n$ is a product measure. This asymptotic decorrelation of the effects of the two loci relies on the following proposition, that is key to establish the convergence stated in \cref{thm:convergence_u}:
\begin{prop}
Let us assume \ref{hyp:nu0}. \bl{For} all $T>0$, let $n_\varepsilon$ be the positive solution of \eqref{P_n} on $[0,T]$. Then\bl{, the following holds}:
\[\forall t\in [0,T],\quad \forall (x,y) \in I\times J,\quad 0<\nu_m\leq \nu_\varepsilon(t,x,y)\leq \nu_M.\]
\label{prop:nu}
\end{prop}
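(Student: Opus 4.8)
The plan is to derive, for each fixed $(x,y)\in I\times J$, a scalar evolution equation in time satisfied by $\nu_\varepsilon(\cdot,x,y)$, of the form
\[\varepsilon\,\partial_t\log\nu_\varepsilon(t,x,y) = \frac{r}{2}\bigl(1-\nu_\varepsilon(t,x,y)\bigr) + R_\varepsilon(t,x,y),\qquad |R_\varepsilon(t,x,y)|\le 2\,\|m\|_\infty ,\]
and then to run an invariance argument showing that the interval $[\nu_m,\nu_M]$ is stable under this dynamics. As a preliminary: by \cref{thm:well_posedness} the solution $n_\varepsilon\in C^1(\R_+\times I\times J)$ is everywhere positive, hence $\rho^X_\varepsilon$, $\rho^Y_\varepsilon$, $\rho_\varepsilon$ are positive and $C^1$ in $t$ (differentiation under the integral sign on the compact sets $I$, $J$), so $\nu_\varepsilon = \frac{\rho^X_\varepsilon\,\rho^Y_\varepsilon}{n_\varepsilon\,\rho_\varepsilon}>0$ is $C^1$ in $t$ and $\log\nu_\varepsilon$ is legitimately differentiable.

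The derivation of the ODE is routine bookkeeping on \eqref{P_n}. Dividing its first line by $n_\varepsilon$ gives $\varepsilon\,\partial_t\log n_\varepsilon = \frac{r}{2}(\nu_\varepsilon+1) - m(x,y) - \kappa\rho_\varepsilon$. Integrating \eqref{P_n} in $x'$ over $I$ and using $\int_I\rho^Y_\varepsilon(t,x')\,dx' = \rho_\varepsilon(t)$ gives $\varepsilon\,\partial_t\rho^X_\varepsilon = r\rho^X_\varepsilon - \kappa\rho_\varepsilon\rho^X_\varepsilon - \int_I m(x',y)\,n_\varepsilon(t,x',y)\,dx'$, hence $\varepsilon\,\partial_t\log\rho^X_\varepsilon = r - \kappa\rho_\varepsilon - \langle m(\cdot,y),\phi^X_\varepsilon(t,\cdot,y)\rangle$, where $\phi^X_\varepsilon:=n_\varepsilon/\rho^X_\varepsilon$ is a probability density in its first variable; symmetrically $\varepsilon\,\partial_t\log\rho^Y_\varepsilon = r - \kappa\rho_\varepsilon - \langle m(x,\cdot),\phi^Y_\varepsilon(t,x,\cdot)\rangle$ with $\phi^Y_\varepsilon:=n_\varepsilon/\rho^Y_\varepsilon$, and integrating over $I\times J$, $\varepsilon\,\partial_t\log\rho_\varepsilon = r - \kappa\rho_\varepsilon - \langle m,\psi_\varepsilon\rangle$ with $\psi_\varepsilon:=n_\varepsilon/\rho_\varepsilon$. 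Writing $\log\nu_\varepsilon = \log\rho^X_\varepsilon + \log\rho^Y_\varepsilon - \log n_\varepsilon - \log\rho_\varepsilon$ and combining these four identities, the $\kappa\rho_\varepsilon$ terms cancel exactly and the $r$ constants collapse to $\frac{r}{2}$, leaving the announced equation with
\[R_\varepsilon(t,x,y) = m(x,y) + \langle m,\psi_\varepsilon\rangle - \langle m(\cdot,y),\phi^X_\varepsilon(t,\cdot,y)\rangle - \langle m(x,\cdot),\phi^Y_\varepsilon(t,x,\cdot)\rangle .\]
Since $\phi^X_\varepsilon$, $\phi^Y_\varepsilon$, $\psi_\varepsilon$ are probability densities and $0\le m\le\|m\|_\infty$ by \ref{hyp:m}, each of the four terms lies in $[0,\|m\|_\infty]$, so $|R_\varepsilon|\le 2\|m\|_\infty$ pointwise, uniformly in $\varepsilon$ and independently of the (unknown) solution.

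The last step is to combine this with the calibration of $\nu_m,\nu_M$ in \ref{hyp:nu0}. Fix $(x,y)$ and set $g(\nu,t):=\frac{r}{2}(1-\nu) + R_\varepsilon(t,x,y)$, which is strictly decreasing in $\nu$. Because $\nu_M\ge 1 + 4\|m\|_\infty/r$, $g(\nu_M,t)\le -2\|m\|_\infty + 2\|m\|_\infty = 0$ for every $t$, and because $0<\nu_m\le 1 - 4\|m\|_\infty/r$ — a nonempty constraint precisely because $4\|m\|_\infty<r$ — $g(\nu_m,t)\ge 0$. Hence whenever $\nu_\varepsilon(t,x,y)\ge\nu_M$ one has $\partial_t\log\nu_\varepsilon(t,x,y) = \frac{1}{\varepsilon}\,g(\nu_\varepsilon(t,x,y),t)\le\frac{1}{\varepsilon}\,g(\nu_M,t)\le 0$ by monotonicity, and symmetrically $\partial_t\log\nu_\varepsilon\ge 0$ whenever $\nu_\varepsilon(t,x,y)\le\nu_m$. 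Together with $\nu_\varepsilon(0,x,y)=\nu_\varepsilon^0(x,y)\in[\nu_m,\nu_M]$ from \ref{hyp:nu0}, a routine continuity argument (if $\nu_\varepsilon$ first strictly exceeded $\nu_M$ at some $t_0<t_1$, then $\log\nu_\varepsilon$ would be nonincreasing on $(t_0,t_1]$, where $\nu_\varepsilon\ge\nu_M$, contradicting $\nu_\varepsilon(t_1)>\nu_M=\nu_\varepsilon(t_0)$; likewise for the lower bound) yields $\nu_m\le\nu_\varepsilon(t,x,y)\le\nu_M$ for all $t\in[0,T]$ and all $(x,y)$.

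The argument is essentially mechanical, and the only delicate points will be the cancellations in the derivation — in particular the identities $\int_I\rho^Y_\varepsilon\,dx' = \int_J\rho^X_\varepsilon\,dy' = \rho_\varepsilon$, which are exactly what make the reproduction contribution to $\partial_t\log\rho^X_\varepsilon$, $\partial_t\log\rho^Y_\varepsilon$ and $\partial_t\log\rho_\varepsilon$ reduce to the clean constant $r$ — and the degenerate case $g(\nu_M,t_0)=0$ in the invariance step, which is why one exploits the monotonicity of $g$ in $\nu$ rather than merely the sign of $\partial_t\log\nu_\varepsilon$ at the boundary. Note also that the equation for $\nu_\varepsilon$ is neither autonomous nor closed, since $R_\varepsilon$ is a functional of the whole solution $n_\varepsilon$; but only the uniform a priori bound $\|R_\varepsilon\|_\infty\le 2\|m\|_\infty$ enters the comparison, so this causes no difficulty.
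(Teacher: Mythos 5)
Your proposal is correct and follows essentially the paper's own route: you differentiate $\nu_\varepsilon$ in time (via $\log\nu_\varepsilon$, i.e.\ summing the logarithmic derivatives of $\rho^X_\varepsilon$, $\rho^Y_\varepsilon$, $n_\varepsilon$, $\rho_\varepsilon$), isolate the logistic term $\frac{r}{2}(1-\nu_\varepsilon)$ plus a selection remainder bounded by $2\|m\|_\infty$ because it is a combination of averages of $m\geq 0$ against probability densities — which is exactly the paper's computation \eqref{eq:calcul_dt_nu} divided by $\nu_\varepsilon$. The only (harmless) difference is in the endgame: you conclude by a forward-invariance argument for the interval $[\nu_m,\nu_M]$, using that \ref{hyp:nu0} and \ref{hyp:m} make its endpoints sub- and supersolutions, whereas the paper compares with the explicit solution of the logistic differential inequality; both are valid.
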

Indeed, the compactness result of \ref{prop:nu} together with some a priori estimates relying on a maximum principle  yield the convergence of \cref{thm:convergence_u} thanks to the Arzela-Ascoli theorem (see \bl{\cref{fig:layout_convergence}} for a flowchart that exposes the layout of the different results). 
 \begin{figure}
     \centering
    \begin{tikzpicture}
        \node[rounded corners=3pt, draw, color = ForestGreen] at (-5,0) (Pneps) {\begin{math}
    \begin{aligned}
    &\text{$\boldsymbol{\boldsymbol{P(n_\varepsilon)}}$}\\
    &\text{\scriptsize $n_\varepsilon \in C^1([0,T]\times(I\times J))$}
    \end{aligned}
    \end{math}};
        \node[rounded corners=3pt, draw, color = ForestGreen] at (5,0) (Pn) 
        {\begin{math}
    \begin{aligned}
    &\text{$\boldsymbol{\boldsymbol{P(n)}}$}\\
    &\text{\scriptsize $n \in L^\infty([0,T],M(I\times J))$}
    \end{aligned}
    \end{math}};
    \draw[-stealth, dashed, color = ForestGreen] (Pneps) -- (Pn);
    \node[rounded corners=3pt, draw, color = ForestGreen] at (0, 1.3) (CVn) {\begin{math}
    \begin{aligned}
    &\text{\textbf{\cref{thm:well_posedness}}}\\
    &\text{\small Well-posedness}\\
    &\text{\small Weak convergence}
    \end{aligned}
    \end{math}};
    \draw[-stealth,color = ForestGreen] (CVn.180) -- (Pneps.90);
    \node[color = ForestGreen] at (0, -0.3) {$\varepsilon \rightarrow 0$};
    \node[rounded corners=3pt, draw, color = RoyalBlue] at (-5,-5) (Pueps) {\begin{math}
    \begin{aligned}
    &\text{$\boldsymbol{P(u_\varepsilon)}$}\\
    &\text{\small $u_\varepsilon \in C^1([0,T]\times I\times J)$}    \end{aligned}
    \end{math}};
    \node[rounded corners=3pt, draw, color = RoyalBlue] at (5,-5) (Pu) {\begin{math}
    \begin{aligned}
    &\text{$\boldsymbol{P(u)}$}\\
    &\text{\small $u \in C^0([0,T]\times I\times J)$}\\
    &\text{\small $u(x,y) =u^X(y)+u^Y(x)$}\end{aligned}
    \end{math}};
    \draw[-stealth, thick, color = RoyalBlue] (Pueps) -- (Pu);
     \draw[-stealth] (Pneps) -- (Pueps);
     \node[rounded corners=3pt, draw, color =Gray] at (-6.5,-2.5) {\begin{math}
    \begin{aligned}
    &\text{\small\textbf{Hopf-Cole}}\\
    &\text{\small $u_\varepsilon = \varepsilon\log\left(\varepsilon\,n_\varepsilon\right)$}    \end{aligned}
    \end{math}};
    \node[rounded corners=3pt, draw, color =RoyalBlue, thick] at (0,-10) (thmu) {\begin{math}
    \begin{aligned}
    &\text{\textbf{\cref{thm:convergence_u}}}\\
    &\text{\small Strong convergence}   \end{aligned}
    \end{math}};
    \node[color = RoyalBlue] at (0, -5.25) (epsu) {$\varepsilon\rightarrow 0$};
    \draw[-stealth, thick, dotted, color = RoyalBlue] (thmu) -- (epsu);
    \node[rounded corners=3pt, draw, color =RoyalBlue] at (-1.8,-7.3) (cvu) {\begin{math}
    \begin{aligned}
    &\text{\scriptsize\textbf{Regularity estimates}}\\
    &\text{\scriptsize\ref{prop:uniform_estimates_u}}\end{aligned}
    \end{math}};
    \draw[-stealth, thick, dotted, color = RoyalBlue] (thmu) -- (Pu);
    \node[rounded corners=3pt, draw, color =RoyalBlue] at (3.7,-8.3) (cvu) {\begin{math}
    \begin{aligned}
    &\text{\scriptsize\textbf{Additivity of $u$}}\\
    &\text{\scriptsize\ref{prop:nu}}\end{aligned}
    \end{math}};
    \draw[stealth-stealth] (Pu) -- (Pn);
    \node[rounded corners=3pt, draw] at (3.7,-2.2) (zerossupport) {\begin{math}\begin{aligned}
    \text{\scriptsize\color{ForestGreen}Support of $n$}\\
    \text{\scriptsize\color{RoyalBlue}$\subset$ zeros of $u$}
    \end{aligned}\end{math}};
    \draw[-stealth, thick, dotted, color = RoyalBlue] (thmu) -- (zerossupport);
    \end{tikzpicture}     
\caption{Flowchart of the analytical results of \cref{sec:well_posedness}, \cref{sec:lipschitz} and \cref{sec:u}.}
     \label{fig:layout_convergence}
 \end{figure}

Moreover, although \ref{P_u} involves an equation on $u_\varepsilon$, one can notice that \cref{thm:convergence_u} states limit equations on $u^X$ and $u^Y$ \eqref{eq:limit_equation}. Instead of passing to the limit in the equation of $u$ in \ref{P_u} once the convergence is established (as it is done in most asexual studies in the regime of small variance), the separation of variables $u(x,y) = u^X(y) +u^Y(x)$ allows us to take {\color{black}an} alternative approach. In the proof \bl{of} \cref{thm:convergence_u}, we will show indeed that $u^X = \underset{\varepsilon\rightarrow 0}{\lim}\;\varepsilon\log\left(\rho^X_\varepsilon\right)$ and $u^Y = \underset{\varepsilon\rightarrow 0}{\lim}\;\varepsilon\log\left(\rho^Y_\varepsilon\right)$. The idea is then to focus on the equations satisfied by $\rho_\varepsilon^X$ and $\rho_\varepsilon^Y$ instead of the equation satisfied by $n_\varepsilon$:
\begin{equation}
    \begin{aligned}
    \begin{cases}
    \varepsilon\, \partial_t \rho^X_\varepsilon(t,y) = \left(r - \kappa\,\rho_\varepsilon(t)\right)\rho^X_\varepsilon(t,y) - \int_{I} m(x,y)\,n_\varepsilon(t,x,y) \,dx,\\
    \varepsilon\, \partial_t \rho^Y_\varepsilon(t,x) = \left(r - \kappa\,\rho_\varepsilon(t)\right)\rho^Y_\varepsilon(t,x) - \int_{J} m(x,y)\,n_\varepsilon(t,x,y) \,dy.
    \end{cases}
    \end{aligned}
    \label{eq:rhoX_rhoY}
\end{equation}
The advantage of considering \eqref{eq:rhoX_rhoY} over \eqref{P_u} is that the reproduction terms involved are linear, much simpler than the integral operator involved in the equation on $u_\varepsilon$. However, the difficulties are transferred on the \bl{selection terms} $\int_{I} m(x,y)\,n_\varepsilon(t,x,y) \,dx$ and $\int_{I} m(x,y)\,n_\varepsilon(t,x,y) \,dx$ that asymptotically lead to involve $\phi^X$ and $\phi^Y$ in \eqref{eq:limit_equation}. These terms are new compared to the typical asexual studies, which only present two unknown variables in their constrained limit equation: $u$ and $\rho$. Consequently, here, regularity in time, which would allow us to write the limit equation \eqref{eq:limit_equation} under a differential form, is harder to get for $\rho$ and $\phi^X$ (resp. $\phi^Y$).

Nevertheless, under an additional hypothesis on the selection term $m$ being additive, we show that the limit size of population $\rho$ is BV. This result aligns with the typical analogous regularity obtained on the asymptotic size of population in aforementioned asexual studies.
 
\begin{theorem}
\label{thm:BV}
Suppose that there exists $m^X:I\rightarrow \R$ and $m^Y:J\rightarrow \R$ such that: \begin{equation}
\label{eq:m_add}
\tag{$\text{H}_{m,\text{add}}$}
    m(x,y) = m^X(x)+m^Y({\color{black}y}).
\end{equation}
Let $n_\varepsilon$ be the solution to $\cref{P_n}$. Then, $\rho_\varepsilon$ is locally uniformly bounded in $W^{1,1}(\R_+)$.
Consequently, after extraction of a subsequence, $\rho_\varepsilon$ converges to a BV-function $\rho$ as $\varepsilon$ vanishes. The limit $\rho$ is non-decreasing as soon as there exists a constant $C>0$ such that:\begin{equation}
\label{eq:I_0}
(r-\kappa\,\rho^0_\varepsilon)\,\rho^0_\varepsilon -\iint_{\R^2}m(x,y)\,n^0_\varepsilon(x,y)\,dx\,{\color{black}dy} \geq -C\,e^{\frac{o(1)}{\varepsilon}}.
\end{equation}
\end{theorem}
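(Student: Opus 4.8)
The plan is to reduce everything to a scalar differential (in)equality for the ``flux''
\[
 P_\varepsilon(t):=\big(r-\kappa\rho_\varepsilon(t)\big)\rho_\varepsilon(t)-\mu_\varepsilon(t),\qquad \mu_\varepsilon(t):=\iint_{I\times J} m\,n_\varepsilon(t,\cdot,\cdot).
\]
Integrating \eqref{P_n} over $I\times J$ and using $\int_J\rho^X_\varepsilon(t,\cdot)=\int_I\rho^Y_\varepsilon(t,\cdot)=\rho_\varepsilon(t)$ — so that the quadratic reproduction term contributes exactly $\tfrac r2(\rho_\varepsilon+\rho_\varepsilon)$ — one gets $\varepsilon\,\rho_\varepsilon'=P_\varepsilon$, hence $\int_0^T|\rho_\varepsilon'|\,dt=\tfrac1\varepsilon\int_0^T|P_\varepsilon|\,dt$, and the whole point is to control the sign changes of $P_\varepsilon$. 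Since $n_\varepsilon\in C^1$ by \cref{thm:well_posedness}, $\rho_\varepsilon$ and $\mu_\varepsilon$ are $C^1$ in time, and I would differentiate $P_\varepsilon$, using \eqref{P_n} to express $\varepsilon\mu_\varepsilon'=\iint m\,\varepsilon\partial_tn_\varepsilon$.

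The one delicate point — and the step I expect to be the genuine obstacle — is that $\varepsilon\mu_\varepsilon'$ contains the term $\iint_{I\times J}m(x,y)\,\rho^Y_\varepsilon(t,x)\rho^X_\varepsilon(t,y)/\rho_\varepsilon(t)\,dx\,dy$, which in general does not close in terms of $\rho_\varepsilon$, $\mu_\varepsilon$ and $\sigma_\varepsilon:=\iint m^2 n_\varepsilon$. This is precisely where \eqref{eq:m_add} enters: with $m(x,y)=m^X(x)+m^Y(y)$, using again $\int_J\rho^X_\varepsilon=\int_I\rho^Y_\varepsilon=\rho_\varepsilon$ and $\int_I m^X\rho^Y_\varepsilon=\iint m^X n_\varepsilon$, that integral collapses exactly to $\mu_\varepsilon$ — against $m$, the nonlinear reproduction term behaves like the linear one — giving $\varepsilon\mu_\varepsilon'=(r-\kappa\rho_\varepsilon)\mu_\varepsilon-\sigma_\varepsilon$. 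Substituting then $\varepsilon\rho_\varepsilon'=P_\varepsilon$, $\mu_\varepsilon=(r-\kappa\rho_\varepsilon)\rho_\varepsilon-P_\varepsilon$, and $\sigma_\varepsilon=\mu_\varepsilon^2/\rho_\varepsilon+V_\varepsilon$ with $V_\varepsilon:=\sigma_\varepsilon-\mu_\varepsilon^2/\rho_\varepsilon\ge0$ by Cauchy--Schwarz ($V_\varepsilon$ being $\rho_\varepsilon$ times the variance of $m$ under $n_\varepsilon/\rho_\varepsilon$), a short computation should yield the Riccati-type identity
\[
 \varepsilon\,P_\varepsilon'(t)=-\kappa\,\rho_\varepsilon(t)\,P_\varepsilon(t)+\frac{P_\varepsilon(t)^2}{\rho_\varepsilon(t)}+V_\varepsilon(t),\qquad V_\varepsilon(t)\ge0 .
\]

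From here the $W^{1,1}$ bound is essentially immediate. Since $V_\varepsilon\ge0$ and $\rho_0^-\le\rho_\varepsilon\le\rho_0^+$ (\cref{thm:well_posedness}), wherever $P_\varepsilon<0$ one has $\varepsilon P_\varepsilon'\ge-\kappa\rho_\varepsilon P_\varepsilon>0$, so $P_\varepsilon$ is strictly increasing on the open set $\{P_\varepsilon<0\}$; consequently this set is an initial interval $[0,t^*_\varepsilon)$ (the left endpoint of a later component would be a zero of $P_\varepsilon$ from which $P_\varepsilon$ immediately increases — impossible), i.e.\ $P_\varepsilon$ changes sign at most once, from $-$ to $+$. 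Hence $\rho_\varepsilon$ is non-increasing on $[0,t^*_\varepsilon]$ and non-decreasing afterwards, so $\int_0^T|\rho_\varepsilon'|\,dt\le 2(\rho_0^+-\rho_0^-)$ uniformly in $\varepsilon$ (and $T$); together with $0\le\rho_\varepsilon\le\rho_0^+$ this is the claimed local uniform bound in $W^{1,1}(\R_+)$, and the compactness of $BV$ in $L^1_{loc}$ (Helly's selection theorem, plus a diagonal extraction over $T\to\infty$) yields a subsequence converging in $L^1_{loc}$, and a.e., to some $\rho\in BV_{loc}(\R_+)$.

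Finally, for the monotonicity of $\rho$: for $t_1<t_2$ one has $\rho_\varepsilon(t_2)-\rho_\varepsilon(t_1)=\tfrac1\varepsilon\int_{t_1}^{t_2}P_\varepsilon\ge-\tfrac1\varepsilon\int_{t_1}^{t_2}\max(0,-P_\varepsilon)$, and the inequality $\varepsilon P_\varepsilon'\ge-\kappa\rho_\varepsilon P_\varepsilon$ gives, by Gronwall on $[0,t^*_\varepsilon)$ (and trivially once $P_\varepsilon$ has turned nonnegative), $\max(0,-P_\varepsilon(t))\le\max(0,-P_\varepsilon(0))\,e^{-\kappa\rho_0^- t/\varepsilon}$ for every $t\in[0,T]$. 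Hence
\[
 \rho_\varepsilon(t_2)-\rho_\varepsilon(t_1)\ \ge\ -\,\frac{\max\big(0,-P_\varepsilon(0)\big)}{\kappa\rho_0^-}\,e^{-\kappa\rho_0^- t_1/\varepsilon},
\]
and since $-P_\varepsilon(0)=\iint m\,n^0_\varepsilon-(r-\kappa\rho^0_\varepsilon)\rho^0_\varepsilon\le C\,e^{o(1)/\varepsilon}$ by \eqref{eq:I_0}, the right-hand side is at most $\tfrac{C}{\kappa\rho_0^-}e^{(o(1)-\kappa\rho_0^- t_1)/\varepsilon}$, which tends to $0$ as $\varepsilon\to0$ for every fixed $t_1>0$ (the fixed relaxation rate $\kappa\rho_0^- t_1$ dominates the vanishing exponent $o(1)$). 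Passing to the limit along the a.e.-convergent subsequence gives $\rho(t_2)\ge\rho(t_1)$ for a.e.\ $t_1<t_2$, i.e.\ $\rho$ non-decreasing. To repeat the key point: without \eqref{eq:m_add} the term $\iint m\,\rho^Y_\varepsilon\rho^X_\varepsilon/\rho_\varepsilon$ carries the full two-locus correlation structure, the scalar equation for $P_\varepsilon$ does not close, and neither the sign-change property nor the uniform $W^{1,1}$ estimate survive — which is exactly why additive selection is assumed here.
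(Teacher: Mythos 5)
Your proof is correct, and its core ingredients are the same as the paper's: differentiate the flux $\varepsilon\rho_\varepsilon'=(r-\kappa\rho_\varepsilon)\rho_\varepsilon-\iint m\,n_\varepsilon$ in time, use \eqref{eq:m_add} to collapse $\iint m\,\rho^X_\varepsilon\rho^Y_\varepsilon/\rho_\varepsilon$ to $\iint m\,n_\varepsilon$, use Cauchy--Schwarz to dominate $-\iint m^2 n_\varepsilon$ by $-\bigl(\iint m\,n_\varepsilon\bigr)^2/\rho_\varepsilon$, and conclude by exponential relaxation of the negative part of $\rho_\varepsilon'$, which gives both the uniform $W^{1,1}$ bound and, under \eqref{eq:I_0}, the monotonicity of the limit. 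The packaging differs in a way worth noting: the paper works with $\mathcal{I}_\varepsilon=\rho_\varepsilon'$ and only derives the one-sided differential inequality $\frac{d(\mathcal{I}_\varepsilon)_-}{dt}\le\frac{2\|m\|_\infty-r}{\varepsilon}(\mathcal{I}_\varepsilon)_-$, so the decay of the negative part (rate $r-2\|m\|_\infty$) uses the quantitative part of \ref{hyp:m}, and the $W^{1,1}$ bound comes from integrating that decay from a crudely bounded initial value. You instead close an exact Riccati-type identity $\varepsilon P_\varepsilon'=-\kappa\rho_\varepsilon P_\varepsilon+P_\varepsilon^2/\rho_\varepsilon+V_\varepsilon$ with $V_\varepsilon\ge0$ (your computation checks out), which buys two refinements: the sign-change argument shows $\rho_\varepsilon$ is unimodal in the ``bathtub'' sense, giving the cleaner total-variation bound $2(\rho_0^+-\rho_0^-)$ that needs only $\rho_0^->0$ (i.e.\ $r>\|m\|_\infty$) rather than $r>2\|m\|_\infty$; and your Gronwall rate $\kappa\rho_0^-=r-\|m\|_\infty$ is sharper than the paper's $r-2\|m\|_\infty$. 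The only phrasing I would tighten is the parenthetical justification that $\{P_\varepsilon<0\}$ is an initial interval: the clean argument is that on a component $(a,b)$ with $a>0$ one has $P_\varepsilon(a)=0$ while $P_\varepsilon$ is increasing and negative on $(a,b)$, so $P_\varepsilon(a)\le P_\varepsilon(t)<0$ for $t\in(a,b)$, a contradiction; your monotonicity-on-the-component reasoning delivers exactly this, whereas ``immediately increases from a zero'' alone would not (at a zero the identity only gives $\varepsilon P_\varepsilon'=V_\varepsilon\ge0$).
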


The paper is organized as follows. In \cref{sec:formal}, we present qualitative results and numerical analysis that stem from the analysis of the subsequent sections, and demonstrate the interest of the model by exploring some biologically relevant situations. Next, in \cref{sec:well_posedness}, we prove the well-posedness of \ref{P_n}.  \cref{sec:lipschitz} is dedicated to show \ref{prop:nu} and derive uniform $L^\infty$ and Lipschitz bounds for $u_\varepsilon$, which prepares the proof of the main result in \cref{sec:u}. The interplay between the different results until that point is displayed in \cref{fig:layout_convergence}. Finally, in \cref{sec:BV}, we show that $\rho$ is a BV-function, under the additional hypothesis \eqref{eq:m_add}.

\section{Qualitative results and numerical analysis}
\label{sec:formal}
In this section, we explore the insights on the dynamics of the allelic distribution in a population following the main result of the paper (\cref{thm:convergence_u}), assuming that $(t,y)\mapsto\phi^X(t,\cdot,y)$, $(t,x)\mapsto\phi^Y(t,x,\cdot)$ and $t\mapsto n(t,\cdot,\cdot)$ (and by extension $t\mapsto \rho^X(t,\cdot)$, $t\mapsto \rho^Y(t,\cdot)$ and $\rho$) are continuous so that we can formally write:
\begin{equation}
\begin{aligned}
\begin{cases}
    \forall (t,y) \in [0,T]\times J,\quad \partial_t u^X(t,y) = r-\kappa\,\rho (t) - \left\langle \phi^X(t,\cdot,y),m(\cdot,y)\right\rangle,\\
    \forall (t,x) \in [0,T]\times I,\quad \partial_t u^Y(t,x) = r-\kappa\,\rho (t) - \left\langle \phi^Y(t,x,\cdot),m({\color{black}x},\cdot)\right\rangle.
\end{cases}
\end{aligned}
    \label{eq:formal_limit_equation_haploid}
\end{equation}
We first show that under a hypothesis of strict monotony \ref{hyp:monotony_m} on the selection, the population is strained to be monomorphic, i.e. all individuals share the same alleles $\left(\bar{x}(t),\bar{y}(t)\right)$ at all times. Then, we derive canonical equations describing the dynamics of $\left(\bar{x}(t),\bar{y}(t)\right)$ under monomorphism.
\subsection{Monotonic selection yields monomorphism}
We first show that a condition of monotony on $m$ (in both variables) yields the limit allelic distribution to be monomorphic at all times:

\begin{prop} For $T>0$, assume that \eqref{eq:formal_limit_equation_haploid} holds and that $m$ satisfies:
\begin{equation}
\label{hyp:monotony_m}
    \tag{$\text{H}_\text{increasing}$}
    \forall (x,y) \in I\times J, m(x,\cdot) \text{ and } m(\cdot,y) \text{ are increasing (resp. decreasing).}
\end{equation}
Then\bl{, the following holds}: $\forall t \in [0,T], \; \exists !\, (\bar{x}(t),\bar{y}(t)) \in I\times J,$
\begin{align*}
    \text{Supp}\left(\rho^X(t,\cdot)\right)=\left[u^X(t,\cdot)\right]^{-1}\left(\{0\}\right)=\{\bar{y}(t)\},\quad\text{Supp}\left(\rho^Y(t,\cdot)\right)=\left[u^Y(t,\cdot)\right]^{-1}\left(\{0\}\right)=\{\bar{x}(t)\}.
\end{align*}
\label{prop:yield_monomorphism}
\end{prop}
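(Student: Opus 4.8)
The plan is to exploit the obstacle-type constraint satisfied by $u^X$ and $u^Y$ — both are $\le 0$ for every $t$ and vanish exactly on their zero sets — together with the evolution equations \eqref{eq:formal_limit_equation_haploid} and the strict monotonicity of $m$. Write $Z^X(t)=[u^X(t,\cdot)]^{-1}(\{0\})$ and $Z^Y(t)=[u^Y(t,\cdot)]^{-1}(\{0\})$; these are nonempty compact sets by \cref{thm:convergence_u}(ii) and continuity. I treat the case where $m(x,\cdot)$ and $m(\cdot,y)$ are increasing (the decreasing case is symmetric, exchanging $\min$ and $\max$ below). Recall that $\phi^X(t,\cdot,y)$, resp. $\phi^Y(t,x,\cdot)$, is a probability measure — it is the limit of $n_\varepsilon/\rho^X_\varepsilon$, whose $x$-integral is $1$ — supported on $Z^Y(t)$, resp. $Z^X(t)$, by \cref{thm:convergence_u}(iv) (upgraded from a.e. to every $(t,y)$ by the continuity postulated at the start of \cref{sec:formal}). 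The first step is a first-order condition: fix $t_0$ in the interior of $[0,T]$ and $y_0\in Z^X(t_0)$; since $u^X(\cdot,y_0)\le 0$ with $u^X(t_0,y_0)=0$, the map $t\mapsto u^X(t,y_0)$ attains its maximum over $[0,T]$ at the interior point $t_0$, so (by the regularity of \cref{sec:formal}) $\partial_t u^X(t_0,y_0)=0$, and \eqref{eq:formal_limit_equation_haploid} yields $\langle \phi^X(t_0,\cdot,y_0),m(\cdot,y_0)\rangle = r-\kappa\rho(t_0)=:\lambda(t_0)$, a value independent of $y_0\in Z^X(t_0)$. The same reasoning on $u^Y$ gives $\langle \phi^Y(t_0,x_0,\cdot),m(x_0,\cdot)\rangle=\lambda(t_0)$ for every $x_0\in Z^Y(t_0)$.

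Next I would identify $\lambda(t_0)$ with an extremal value of $m$ and deduce that the zero sets are singletons. Set $a_\pm=\min/\max Z^Y(t_0)$ and $b_\pm=\min/\max Z^X(t_0)$. Testing the first identity with $y_0=b_-$: since $m(\cdot,b_-)$ is increasing and $\phi^X(t_0,\cdot,b_-)$ is a probability measure supported in $[a_-,a_+]$, its integral against $m(\cdot,b_-)$ is $\le m(a_+,b_-)$, so $\lambda(t_0)\le m(a_+,b_-)$. Testing the second identity with $x_0=a_+$: since $m(a_+,\cdot)$ is increasing and $\phi^Y(t_0,a_+,\cdot)$ is a probability measure supported in $[b_-,b_+]$, its integral against $m(a_+,\cdot)$ is $\ge m(a_+,b_-)$, so $\lambda(t_0)\ge m(a_+,b_-)$. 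Hence $\lambda(t_0)=m(a_+,b_-)=\max_{Z^Y(t_0)}m(\cdot,b_-)$, and because $m(\cdot,b_-)$ is \emph{strictly} increasing this maximum is attained only at $a_+$; a probability measure whose integral against $m(\cdot,b_-)$ equals its maximum must be $\delta_{a_+}$, so $\phi^X(t_0,\cdot,b_-)=\delta_{a_+}$. As $\operatorname{supp}\phi^X(t_0,\cdot,b_-)=Z^Y(t_0)$, this forces $Z^Y(t_0)=\{a_+\}=:\{\bar x(t_0)\}$. Then for every $y_0\in Z^X(t_0)$ the measure $\phi^X(t_0,\cdot,y_0)$, being supported on the one-point set $Z^Y(t_0)$, equals $\delta_{\bar x(t_0)}$, so the first identity becomes $m(\bar x(t_0),y_0)=\lambda(t_0)$; since $y\mapsto m(\bar x(t_0),y)$ is strictly increasing, $Z^X(t_0)$ is a single point $\{\bar y(t_0)\}$. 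Combined with $\operatorname{supp}(\rho^X(t_0,\cdot))=Z^X(t_0)$ and $\operatorname{supp}(\rho^Y(t_0,\cdot))=Z^Y(t_0)$ (from \cref{thm:convergence_u}), this is the assertion for interior $t_0$, and $t_0=T$ follows by running the same argument on a slightly larger time interval.

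The hard part is the identification of $\lambda(t_0)$: a priori $\phi^X(t_0,\cdot,y)$ may depend on $y$, so one cannot simply argue that the strictly increasing map $y\mapsto\langle\phi^X(t_0,\cdot,y),m(\cdot,y)\rangle$ is injective — that reasoning would be circular, since it presumes the measure's independence of $y$, which itself follows from $Z^Y(t_0)$ being a singleton. It is precisely the joint use of the two first-order conditions to pin $\lambda(t_0)$ to the extremal value $m(a_+,b_-)$, followed by the support identity of \cref{thm:convergence_u}(iv), that breaks this circularity. (If one establishes separately that $\phi^X$ does not depend on $y$ — which follows from the additive structure of $u$ by a Laplace/Varadhan-type argument — the whole second paragraph collapses to the remark that $y\mapsto\langle\phi^X(t_0,\cdot),m(\cdot,y)\rangle$ is strictly increasing, hence takes the value $\lambda(t_0)$ at most once.) The only genuinely delicate endpoint is $t_0=0$, where only the one-sided inequality $\partial_t u^X(0,y_0)\le 0$ is available; there the statement should be read in the limit $t_0\downarrow 0$, or under the mild additional assumption that $u^X(0,\cdot)$ and $u^Y(0,\cdot)$ have unique maximizers.
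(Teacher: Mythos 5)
Your proof is correct and follows essentially the same route as the paper: evaluate the limit equation at the zeros of $u^X,u^Y$ (time-maximum points, so $\partial_t u=0$), equate the selection averages to $r-\kappa\rho(t)$, and use the monotonicity of $m$ together with the fact that $\phi^X,\phi^Y$ are probability measures whose supports coincide with the zero sets to force Dirac masses at extreme points and hence singletons. The only differences are cosmetic — you work with the single corner $(a_+,b_-)$ and close via strict monotonicity in $y$, where the paper uses both corner pairs and the $y$-independence of $\operatorname{supp}\phi^X$, and you are more careful than the paper about the endpoint times $t=0,T$.
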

{\color{black}\paragraph{Diploid case: homozygosity.}In the diploid case, the symmetries indicated in \cref{rem:diploid} yield $u^X = u^Y$ and therefore $\bar{x}(t)=\bar{y}(t)$ for all $t\in[0,T]$. All individuals are therefore homozygote in a monomorphic population.}
\begin{proof}[Proof of \ref{prop:yield_monomorphism}]
For $t\in [0,T]$, since $n(t,\cdot,\cdot)$ is supported at the zeros of $u(t,\cdot,\cdot)$ (see \cref{thm:convergence_u}), $\rho^X(t,\cdot)$ is supported on the set of the zeros of $u^X(t,\cdot)$, that we denote {\color{black}by} $F_X(t)$, and $\rho^Y(t,\cdot)$ is supported on the set of the zeros of $u^Y(t,\cdot)$, that we denote {\color{black}by} $F_Y(t)$.  It is therefore sufficient to prove that $F_X(t)$ and $F_Y(t)$ are both singletons for all $t\in[0,T]$.

\bl{Since $u^X(t,\cdot)$ and $u^Y(t,\cdot)$ are continuous}, $F_X(t)$ and $F_Y(t)$ are closed subsets of $I$ and $J$, and are therefore compact sets. In particular, the extreme points of $F_X(t)$ (resp. $F_Y(t)$) denoted by $y_\text{inf}(t)$ and $y_\text{sup}(t)$ (respectively, $x_\text{inf}(t)$ and $x_\text{sup}(t)$) lie in $F_X(t)$ (respectively, $F_Y(t)$).  As $(t,y_\text{inf}(t))$ and $(t,y_\text{sup}(t))$ maximise $u^X$ and $(t,x_\text{inf}(t))$ and $(t,x_\text{sup}(t))$ maximise $u^Y$ (since $u^X$ are $u^Y$ are non-positive, from \cref{thm:convergence_u}), we obtain that \[0= \partial_t u^X(t,y_\text{inf}(t)) =\partial_t u^X(t,y_\text{sup}(t))=\partial_t u^Y(t,x_\text{inf}(t)) =\partial_t u^Y(t,x_\text{sup}(t)).\] 

The equations \cref{eq:formal_limit_equation_haploid} next implies that
\begin{align*}
    \forall t\in[0,t], \left\langle \phi^X(t,\cdot,y_\text{inf}(t)),m(\cdot,y_\text{inf}(t))\right\rangle= \left\langle \phi^X(t,\cdot,y_\text{sup}(t)),m(\cdot,y_\text{sup}(t))\right\rangle\\ = \left\langle \phi^Y(t,x_\text{inf}(t),\cdot),m(x_\text{inf}(t),\cdot)\right\rangle = \left\langle \phi^Y(t,x_\text{sup}(t),\cdot),m(x_\text{sup}(t),\cdot)\right\rangle.
\end{align*}
Recall that, for $(t,x,y)\in[0,T]\times I\times J$, $\phi^X(t,\cdot,y)$ and $\phi^X(t,x,\cdot)$ are probability distributions supported respectively on a subset of $F_X(t)$ and $F_Y(t)$ (from \cref{thm:convergence_u}). Then, we deduce from \ref{hyp:monotony_m} that:
\begin{align*}
    m\left(x_\text{inf}(t),y_\text{sup}(t)\right) &\leq \left\langle \phi^X(t,\cdot,y_\text{sup}(t)),m(\cdot,y_\text{sup}(t))\right\rangle\\
    &= \left\langle \phi^Y(t,x_\text{inf}(t),\cdot),m(x_\text{inf}(t),\cdot)\right\rangle\\
    & \leq m\left(x_\text{inf}(t),y_\text{sup}(t)\right).
\end{align*}
Similarly, we obtain
\begin{align*}
    m\left(x_\text{sup}(t),y_\text{inf}(t)\right) &{\color{black}\leq} \left\langle \phi^Y(t,x_\text{sup}(t),\cdot),m(x_\text{sup}(t),\cdot)\right\rangle\\
    &= \left\langle \phi^X(t,\cdot,y_\text{inf}(t)),m(\cdot,y_\text{inf}(t))\right\rangle\\
    & \leq m\left(x_\text{sup}(t),y_\text{inf}(t)\right).
\end{align*}
All the inequalities above must be equalities, which implies 
\begin{equation}
\begin{aligned}
    &\phi^X(t,x,y_{\sup{}}(t)) = \delta_{x = x_{\inf}(t)},\quad \phi^X(t,x,y_{\inf{}}(t)) = \delta_{x = x_{\sup}(t)},\\
    &\phi^Y(t,x_{\sup{}}(t),y) = \delta_{y = y_{\inf}(t)},\quad \phi^Y(t,x_{\inf{}}(t),y) = \delta_{y = y_{\sup}(t)}.
\end{aligned}
\label{eq:phi_dirac}
\end{equation}
Since the support of $\phi^X(t,\cdot,y)$ (resp. $\phi^Y(t,x,\cdot)$) does not depend on $y$ (resp. $x$) (see $(iv)$ of \cref{thm:convergence_u}), we obtain from \eqref{eq:phi_dirac} that $x_{\inf}(t) = x_{\sup}(t)$ and $y_{\inf}(t)=y_{\sup}(t)$. The latter yields the result.
\end{proof}

\paragraph{Numerical simulations: robustness of monomorphism with regard to \ref{hyp:monotony_m} of \ref{prop:monomorphism} with dimorphic initial densities.}

We show in \cref{fig:monomorphism_robustness} the result of numerical simulations solving a discretized version of \eqref{P_n} \bl{with initial dimorphic} states to test the robustness of monomorphic trajectories with regard to \ref{hyp:monotony_m}. We consider three different {\color{black}selection functions} $m(x,y) = x^2+y^2$, $m(x,y) = (x+y)^2$, $m(x,y) = (1-xy)^2$. \cref{fig:monomorphism_robustness} seems to indicate that monomorphic trajectories occur under a wider scope \bl{than the one required by} \ref{prop:yield_monomorphism}. \cref{fig:monomorphism_robustness} also gives some insights on the diversity of trajectories that can arise under different \bl{selection functions} (see the next subsection for a more complete view).
\begin{figure}
\centering
        \begin{subfigure}{.45\textwidth}
            \centering
            \includegraphics[height=0.25\textheight]{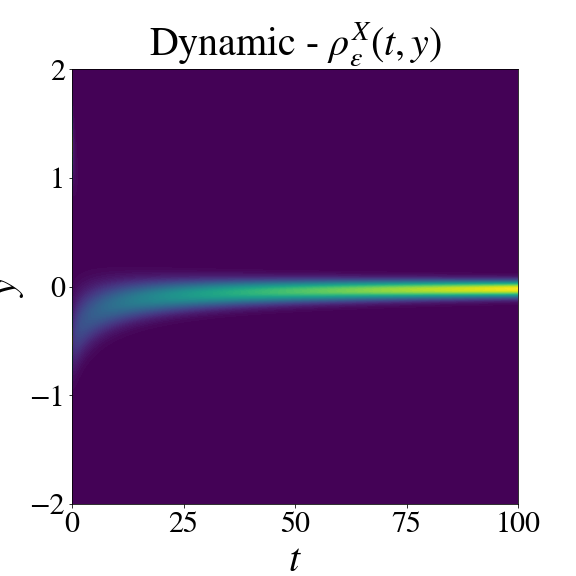}
            \subcaption{$m(x,y) = x^2+y^2$}
            \label{fig:sum_of_squares_0_1}
        \end{subfigure}
        \begin{subfigure}{.45\textwidth}
           \centering \includegraphics[height=0.25\textheight]{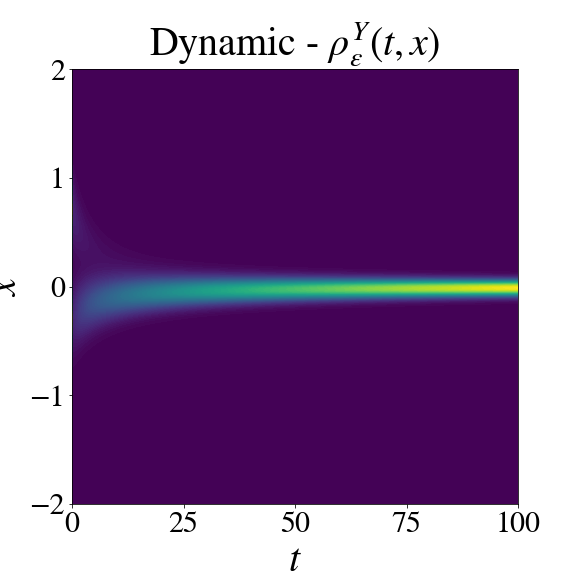}
            \subcaption{$m(x,y) = x^2+y^2$.}
            \label{fig:sum_of_squares_-2_2}
        \end{subfigure}\\
        \begin{subfigure}{.45\textwidth}
            \centering
            \includegraphics[height=0.25\textheight]{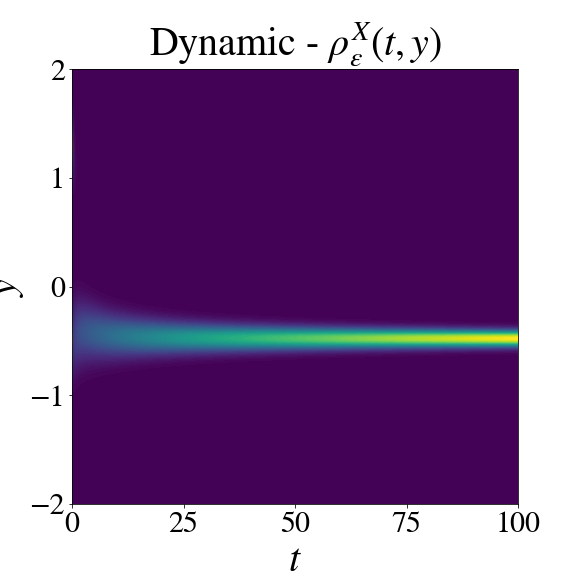}
            \subcaption{$m(x,y) = (x+y)^2.$}
            \label{fig:squared_sum_0_1}
            \end{subfigure}
            \begin{subfigure}{.45\textwidth}
            \centering
            \includegraphics[height=0.25\textheight]{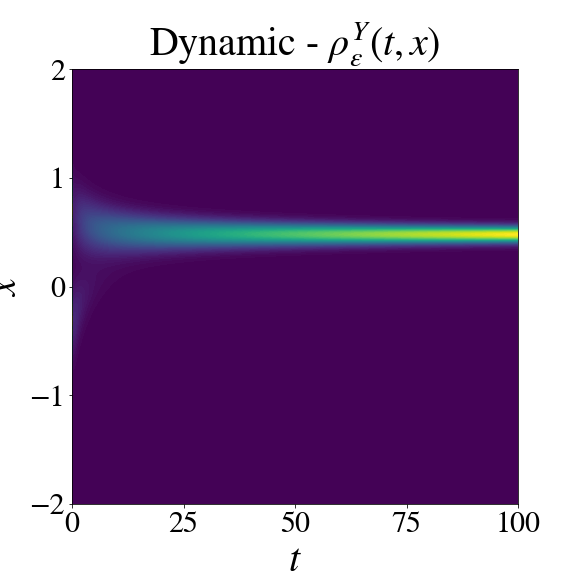}
            \subcaption{$m(x,y) = (x+y)^2.$}
            \label{fig:squared_sum_-2_2}
        \end{subfigure}\\
        \begin{subfigure}{.45\textwidth}
            \centering
            \includegraphics[height=0.25\textheight]{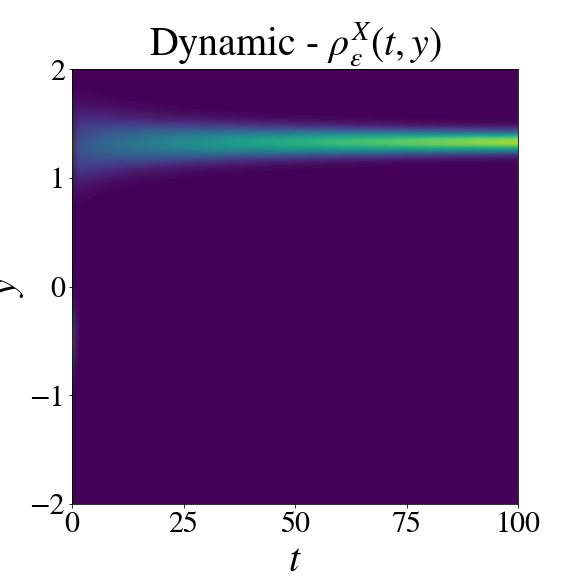}
            \subcaption{$m(x,y) = (1-x\,y)^2.$}
            \label{fig:squared_sum_0_1}
            \end{subfigure}
            \begin{subfigure}{.45\textwidth}
            \centering
            \includegraphics[height=0.25\textheight]{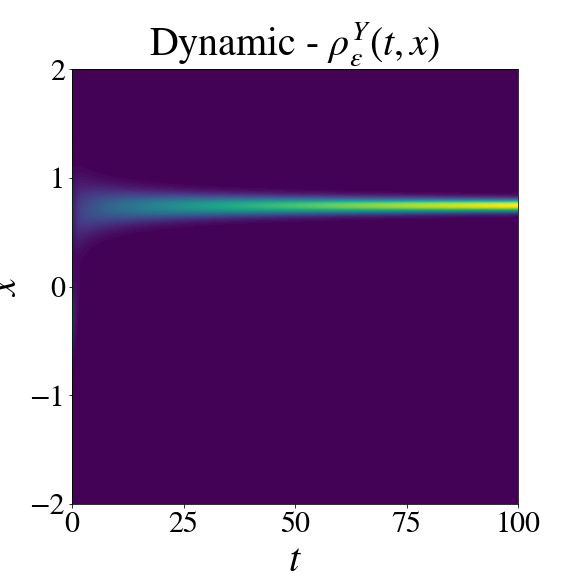}
            \subcaption{$m(x,y) = (1-x\,y)^2$.}
            \label{fig:squared_sum_-2_2}
        \end{subfigure}
    \caption{\textbf{Robustness of monomorphism with regard to {\color{black}assumption} \ref{hyp:monotony_m} of \ref{prop:monomorphism} with dimorphic initial densities.} For each selection function (by row), we display the numerically solved dynamics of $\rho^X(t,y)$ (left panel) and $\rho^Y(t,x)$ (right panel) ($(x,y) \in [-2,2]$). {\color{Black} The colors correspond to isolines of $\rho^X$ and $\rho^Y$.} {\color{black}The initial state is sum of two Gaussians centered in $(x_1,y_1) = (-0.3, 1.3)$ and $(x_2,y_2) = (0.7, -0.5)$ and of variance $\varepsilon = 0.05$}. Lighter colors indicate stronger densities. The figures seem to indicate that the trajectories become monomorphic almost instantaneously and that this phenomenon actually occur under {\color{black}weaker} conditions than \ref{hyp:monotony_m} of \ref{prop:monomorphism}. One can also notice that the stationary dominant alleles that arise vary greatly from one selection function to another.}
    \label{fig:monomorphism_robustness}
\end{figure}

\subsection{Canonical equations under monomorphism}

In all this section, let us fix $T>0$ and let us assume that for all time $t\in[0,T]$, there exists {\color{black}unique points $\bar{x}(t)$ and $\bar{y}(t)$} such that:
\begin{equation}
    \forall t \in [0,T]\quad u(t,\cdot,\cdot)^{-1}\left(\{0\}\right) = \{(\bar{x}(t),\bar{y}(t))\}.
    \label{eq:monomorphism_hyp}
\end{equation}
In that case, for all $(t,x,y) \in [0,T]\times I\times J$, we deduced from \cref{thm:convergence_u} that:
\begin{equation*}
    \phi^X(t,\cdot,y) = \delta_{\bar{x}(t)}, \quad \phi^Y(t,x,\cdot) = \delta_{\bar{y}(t)}.
\end{equation*}
Hence, \eqref{eq:formal_limit_equation_haploid} reads:
\begin{equation}
\begin{aligned}
\begin{cases}
    \forall (t,y) \in [0,T]\times J,\quad \partial_t u^X(t,y) = r-\kappa\,\rho (t) - \bl{m(\bar{x}(t),y)},\\
    \forall (t,x) \in [0,T]\times I,\quad \partial_t u^Y(t,x) = r-\kappa\,\rho (t) - m(x,\bar{y}(t)).
\end{cases}
\end{aligned}
    \label{eq:formal_limit_equation_haploid_monom}
\end{equation}
\begin{prop}
\label{prop:monomorphism}
For $T>0$, {\color{black}assume that $m, u_0 \in C^2(I\times J)$ and that \eqref{eq:formal_limit_equation_haploid_monom} holds.}
Then: $u\in C^0([0,T],C^2(I\times J))$ and the dynamics of the dominant alleles $(\bar{x}(t),\bar{y}(t))$ read:
\begin{equation}
\begin{aligned}
\begin{cases}
{\partial_{xx} u^Y(\bar{x}(t))}\,\frac{d\bar{x}}{dt} =\partial_x m(\bar{x}(t),\bar{y}(t)),
\\
{\partial_{yy} u^X(\bar{y}(t))}\,\frac{d\bar{y}}{dt} =\partial_y m(\bar{x}(t),\bar{y}(t)).
\end{cases}
\end{aligned}
    \label{eq:canonical_equation}
\end{equation}
\end{prop}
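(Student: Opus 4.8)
The plan is to treat \eqref{eq:formal_limit_equation_haploid_monom} as an explicit representation of $u^X,u^Y$ rather than as a differential system. Integrating its two lines in time yields, for $(t,x,y)\in[0,T]\times I\times J$,
\[
u^X(t,y) = u^X(0,y) + r\,t - \kappa\int_0^t \rho(s)\,ds - \int_0^t m(\bar{x}(s),y)\,ds,
\]
and the symmetric formula for $u^Y(t,x)$ with $m(x,\bar{y}(s))$ in place of $m(\bar{x}(s),y)$. The first step is to record that $\bar{x}$ and $\bar{y}$ are continuous on $[0,T]$: since $u$ is continuous (\cref{thm:convergence_u}) and $\bar{x}(t)$ is the unique point where $u^Y(t,\cdot)$ attains its maximum value $0$, any limit point of $\bar{x}(t_n)$ along $t_n\to t$ is a zero of $u^Y(t,\cdot)$, hence equals $\bar{x}(t)$ by \eqref{eq:monomorphism_hyp}; likewise for $\bar{y}$. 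Then, because $\rho$ is continuous (assumed in this section) and $m,u_0\in C^2$, one differentiates under the integral sign — legitimate as $m(\bar{x}(s),\cdot),\,m(\cdot,\bar{y}(s))\in C^2$ — to get $u^X(t,\cdot)\in C^2(J)$, $u^Y(t,\cdot)\in C^2(I)$ with $C^2$-norms Lipschitz-continuous in $t$ (the increments being controlled by $|t-t_0|\,\|m\|_{C^2}$). Since $u=u^Y+u^X$ is a sum of $C^2$ functions of separate variables, this gives $u\in C^0([0,T],C^2(I\times J))$, the first claim.

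For the canonical equations, the idea is to differentiate the first-order optimality condition at the dominant allele. By \eqref{eq:u_additive} and \eqref{eq:monomorphism_hyp}, $\bar{x}(t)$ is the unique maximiser of $u^Y(t,\cdot)$ on $I$; assuming (as is implicit in the statement) it lies in the interior of $I$, this gives $\partial_x u^Y(t,\bar{x}(t))=0$ for all $t$. From the integral formula, $\partial_x u^Y(t,x)=\partial_x u^Y(0,x)-\int_0^t\partial_x m(x,\bar{y}(s))\,ds$, so $G(t,x):=\partial_x u^Y(t,x)$ satisfies $\partial_t G(t,x)=-\partial_x m(x,\bar{y}(t))$ and $\partial_x G(t,x)=\partial_{xx}u^Y(t,x)$, both continuous in $(t,x)$; hence $G\in C^1([0,T]\times I)$ with $G(t,\bar{x}(t))=0$. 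Under the non-degeneracy $\partial_{xx}u^Y(t,\bar{x}(t))\neq 0$, the implicit function theorem gives $\bar{x}\in C^1$ near $t$, and differentiating $G(t,\bar{x}(t))=0$:
\[
\partial_t G(t,\bar{x}(t)) + \partial_{xx}u^Y(t,\bar{x}(t))\,\frac{d\bar{x}}{dt} = 0,
\]
which, using $\partial_t G(t,\bar{x}(t))=-\partial_x m(\bar{x}(t),\bar{y}(t))$, is exactly the first line of \eqref{eq:canonical_equation}. The second line follows by the symmetric argument applied to the maximiser $\bar{y}(t)$ of $u^X(t,\cdot)$, with $\partial_t\big(\partial_y u^X(t,y)\big)=-\partial_y m(\bar{x}(t),y)$.

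The main obstacle is precisely the passage from the pointwise stationarity $\partial_x u^Y(t,\bar{x}(t))=0$ to the differentiability of $t\mapsto\bar{x}(t)$: this rests on the non-vanishing of the second derivative $\partial_{xx}u^Y(t,\bar{x}(t))$ at the dominant allele, the exact analogue of the non-degeneracy of the peak of $u$ used in the small-variance literature (\cite{Diekmann_Jabin_Mischler_Perthame_2005,Barles_Mirrahimi_Perthame_2009}); where it fails the canonical equation degenerates and only holds in the implicit form $\partial_{xx}u^Y(\bar{x}(t))\,\frac{d\bar{x}}{dt}=\partial_x m(\bar{x}(t),\bar{y}(t))$. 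A secondary point, needed for the first-order conditions, is that $\bar{x}(t),\bar{y}(t)$ stay in the interiors of $I,J$; on the boundary the derivation must be modified since the dynamics are then constrained. Everything else reduces to differentiation under the integral sign and the chain rule; in the diploid case (\cref{rem:diploid}) the symmetry $u^X=u^Y$ makes the two equations of \eqref{eq:canonical_equation} coincide, consistently with $\bar{x}(t)=\bar{y}(t)$.
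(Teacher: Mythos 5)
Your proof is correct and follows essentially the same route as the paper: differentiate the stationarity condition $\partial_x u^Y(t,\bar{x}(t))=0$ in time and substitute $\partial_t\partial_x u^Y(t,x)=-\partial_x m(x,\bar{y}(t))$ obtained from \eqref{eq:formal_limit_equation_haploid_monom}. The extra material you supply (the time-integrated representation giving $u\in C^0([0,T],C^2(I\times J))$, the continuity of $\bar{x},\bar{y}$, and the implicit function theorem under the non-degeneracy $\partial_{xx}u^Y(t,\bar{x}(t))\neq 0$ to justify differentiability of $\bar{x}$) only fills in points the paper leaves implicit in this qualitative section, and your caveats about degeneracy and boundary maximisers are consistent with the remark following the proposition.
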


{\color{black}\paragraph{Diploid case: canonical equations.} In the diploid case, the symmetries indicated in \cref{rem:diploid} yield $u^X = u^Y$ and $\bar{x}=\bar{y}$, so the canonical equations \eqref{eq:canonical_equation} reduce to
\begin{equation}
    \label{eq:canonical_diploid}
    {\partial_{xx} u^X(\bar{x}(t))}\,\frac{d\bar{x}}{dt} =\partial_x m(\bar{x}(t),\bar{x}(t)).
\end{equation}
}
\begin{proof}
Let us show {\color{black}how to obtain} the first equation of \eqref{eq:canonical_equation} on $\bar{x}(t)${\color{black}. The equation on $\bar{y}(t)$ can be obtained similarly}.

As $0 = \partial_x u^Y(t,\bar{x}(t))=u^Y(t,\bar{x}(t)) = \max u^Y(t,\cdot) $ for all $t \in[0,T]$, we get:
\[0 = \frac{d\,\partial_x u^Y(t,\bar{x}(t))}{dt} = \partial_t\partial_x u^Y(t,\bar{x}(t)) + \partial_{xx}u^Y(t,\bar{x}(t))\,\frac{d\bar{x}}{dt}.\]
Differentiating \eqref{eq:formal_limit_equation_haploid_monom} with regard to $x$ reads:
\begin{equation*}
    \forall (t,x) \in [0,T]\times I,\quad \partial_x\partial_t u^Y(t,x) =  - \partial_x m(x,\bar{y}(t)).
\end{equation*}
By substitution, we obtain:
\[    \frac{d\bar{x}}{dt} \,\partial_{xx}u^Y(t,\bar{x}(t)) = \partial_x m(\bar{x}(t),\bar{y}(t)).\]
\end{proof}
\begin{rem}
As $\bar{x}(t)$ maximizes $u^Y$, we have $\partial_{xx}u^Y(t,\bar{x}(t)) \leq 0$ for all $t\in[0,T]$. If $\partial_{xx}u^Y(t,\bar{x}(t))<0$ for all $t\in[0,T]$, then we obtain:
\[ \forall t \in [0,T],\quad   \frac{d\bar{x}}{dt}  = \frac{\partial_x m(\bar{x}(t),\bar{y}(t))}{\partial_{xx}u^Y(t,\bar{x}(t))}.\]
As $\partial_t\partial_{xx} u^Y(t,x) = -\partial_{xx} m(x,\bar{y}(t))$, we obtain 
\begin{equation*}
    \partial_{xx} u^Y(t,x) = \partial_{xx} u^Y(0,x) - \int_0^t \partial_{xx} m(x,\bar{y}(s))\,ds.
\end{equation*}
Consequently, the strict inequality is ensured if $u_0$ is strictly concave and $m$ is convex.
\end{rem}

\paragraph{Three examples.}

In this paragraph, we illustrate the insights provided by \ref{prop:monomorphism} through the study of the system for three given selection functions $m$. In all examples, we consider that $I = J = [-2,2]$ and the initial state $u_0$ is given by $(\bar{x}_0, \bar{y}_0)\in [-2,2]^2$ and:
\begin{equation*}
    u_{0}(x,y) = -((x-\bar{x}_0)^2+(y-\bar{y}_0)^2).
\end{equation*}

\emph{1) $m(x,y) = x^2+y^2,\quad \partial_x m(x,y) = 2x, \quad \partial_y m(x,y) = 2y, \quad\partial_{xx} m(x,y) = \partial_{yy} m(x,y) = 2$.}

This selection function separates additively {\color{black}the} variables. The canonical equation \cref{eq:canonical_equation} then reads:
\begin{equation*}
\begin{aligned}
\begin{cases}
    \frac{d \bar{x}(t)}{dt} = -\frac{\bar{x}(t)}{t+1},\\
    \frac{d \bar{y}(t)}{dt} = -\frac{\bar{y}(t)}{t+1}.
\end{cases}
\end{aligned}
\end{equation*}
We obtain that, for $t\geq 0$
\begin{equation*}
        \bar{x}(t) = \frac{\bar{x}_0}{t+1},\quad \bar{y}(t)= \frac{\bar{y}_0}{t+1}.
\end{equation*}

{\color{black}Consequently, the system remains monomorphic and the dominant alleles evolve and converge \bl{toward} $(0,0)$.}

\emph{2) $m(x,y) = (x+y)^2,\quad \partial_x m(x,y) = \partial_y m(x,y) = 2(x+y), \quad \partial_{xx} m(x,y) = \partial_{yy} m(x,y) = 2$.}

The canonical equation \cref{eq:canonical_equation} then reads:
\begin{equation*}
    \frac{d \bar{x}(t)}{dt} = \frac{d \bar{y}(t)}{dt}= -\frac{\bar{x}(t)+\bar{y}(t)}{t+1}.
\end{equation*}
We deduce that, for $t\geq 0$
\begin{equation*}
    \bar{x}(t) + \bar{y}(t) = \frac{\bar{x}_0+\bar{y}_0}{(t+1)^2},\quad \bar{x}(t) - \bar{y}(t) = \bar{x}_0-\bar{y}_0,
\end{equation*}
which leads to:
\begin{equation*}
    \bar{x}(t) = \frac{\bar{x}_0-\bar{y}_0}{2}+\frac{\bar{x}_0+\bar{y}_0}{2(t+1)^2},\quad \bar{y}(t) = \frac{\bar{y}_0-\bar{x}_0}{2}+\frac{\bar{x}_0+\bar{y}_0}{2(t+1)^2}.
\end{equation*}
On the contrary to the previous example, {\color{black}the dominant alleles of the monomorphic system evolve to converge \bl{toward} a state that is} dependent on the initial state of the system. Geometrically, it is the orthogonal projection of the initial point $(\bar{x}_0, \bar{y_0})$ on the diagonal defined by $x+y=0$. 

\emph{3) $m(x,y) = (1-xy)^2,\quad \partial_x m(x,y) = -2\,y\,(1-xy),\quad \partial_y m(x,y) = -2\,x\,(1-xy)$,}

\emph{$\partial_{xx} m(x,y) = 2\,y^2,\quad \partial_{yy} m(x,y) = 2\,x^2$.}

In {\color{black}this} case, the canonical equation \eqref{eq:canonical_equation} reads:
\begin{equation}
    \frac{d \bar{x}(t)}{dt} = \frac{\bar{y}(t)\left(1-\bar{x}(t)\bar{y}(t)\right)}{1+\int_0^t\bar{y}(s)^2\,ds}\qquad \frac{d \bar{y}(t)}{dt} = \frac{\bar{x}(t)\left(1-\bar{x}(t)\bar{y}(t)\right)}{1+\int_0^t\bar{x}(s)^2\,ds}.
    \label{eq:(1-xy)^2}
\end{equation}
Without lack of generality, we can assume that $\bar{x}_0 \leq \bar{y}_0$.
\begin{prop}
Let $0< \bar{x}_0 \leq \bar{y}_0\leq 2$. Then the {\color{black}dominant alleles of the monomorphic system converge} \bl{toward} the stationary state $(x_F,y_F) \in \left(\R^*_+\right)^2$ that solves:
\begin{equation}
    \begin{aligned}
        \begin{cases}
        x_F\,y_F = 1,\\
        y_F^2-x_F^2 = \bar{y}_0^2-\bar{x}_0^2.
        \end{cases}
    \end{aligned}
    \label{eq:system_x_F_Y_f}
\end{equation}
\label{prop:1-xy}
\end{prop}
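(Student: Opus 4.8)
The plan is to turn the non-local system \eqref{eq:(1-xy)^2} into an autonomous ODE by introducing the auxiliary unknowns $A(t):=1+\int_0^t\bar y(s)^2\,ds$ and $B(t):=1+\int_0^t\bar x(s)^2\,ds$, so that $A'=\bar y^2$, $B'=\bar x^2$, $A(0)=B(0)=1$, and \eqref{eq:(1-xy)^2} becomes $\bar x'=\bar y(1-\bar x\bar y)/A$, $\bar y'=\bar x(1-\bar x\bar y)/B$. First I would record the identity $\frac{d}{dt}(\bar x\bar y)=(1-\bar x\bar y)\left(\frac{\bar y^2}{A}+\frac{\bar x^2}{B}\right)=(1-\bar x\bar y)\,\frac{d}{dt}\log(AB)$, from which the sign of $1-\bar x\bar y$ is constant in time. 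If $\bar x_0\bar y_0=1$ the system is already at rest and \eqref{eq:system_x_F_Y_f} holds trivially; the case $\bar x_0\bar y_0>1$ is symmetric to $\bar x_0\bar y_0<1$, so assume $\bar x_0\bar y_0<1$. Then $\bar x,\bar y$ are non-decreasing, bounded above (e.g. $\bar x\le\bar y\le 1/\bar x_0$ since $\bar x\bar y<1$), hence remain in a fixed compact subset of $(0,+\infty)^2$ and converge to limits $x_F\ge\bar x_0>0$, $y_F\ge\bar y_0>0$; being bounded below by positive constants, $A(t),B(t)\to+\infty$.

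The relation $x_F y_F=1$ I would obtain by integrating the identity above: $AB\,(1-\bar x\bar y)=1-\bar x_0\bar y_0$ for all $t$, and since $AB\to+\infty$ this forces $\bar x\bar y\to 1$. Using in addition $(\bar x A)'=\bar y$ and $(\bar y B)'=\bar x$, hence $\bar x A=\bar x_0+\int_0^t\bar y$ and $\bar y B=\bar y_0+\int_0^t\bar x$, one gets the sharp rates $A(t)\sim(y_F/x_F)\,t$, $B(t)\sim(x_F/y_F)\,t$, and combined with $1-\bar x\bar y=(1-\bar x_0\bar y_0)/(AB)=O(t^{-2})$ also $x_F-\bar x(t)=O(t^{-2})$, $y_F-\bar y(t)=O(t^{-2})$.

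For the second relation I would exploit the first integral $\bar y^2 B-\bar x^2 A=\bar y_0^2-\bar x_0^2$, obtained by checking $\frac{d}{dt}(\bar x^2 A)=\frac{d}{dt}(\bar y^2 B)=\bar x^2\bar y^2+2\bar x\bar y(1-\bar x\bar y)$. Dividing by $B^2$ rewrites it as $\left(\frac{A}{B}\right)'=\frac{\bar y_0^2-\bar x_0^2}{B^2}$; together with the normalisations $\int_0^\infty\frac{\bar x^2}{B^2}\,dt=1$ and $\int_0^\infty\frac{\bar y^2}{A^2}\,dt=1$ (from $(-1/B)'=\bar x^2/B^2$) one passes to the limit $t\to+\infty$ to identify $y_F^2/x_F^2=\lim_{t\to\infty}A(t)/B(t)$ in terms of the data, and then, combined with $x_F y_F=1$, this should yield the second equation of \eqref{eq:system_x_F_Y_f}.

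The main obstacle is exactly this last passage to the limit: $\bar y^2 B$ and $\bar x^2 A$ each grow linearly in $t$, so $\bar y^2 B-\bar x^2 A=\bar y_0^2-\bar x_0^2$ is a statement about the cancellation of those linear parts, and extracting the finite limit requires the second-order asymptotics of $\bar x,\bar y,A,B$ obtained above together with the convergence of the "defect" integrals such as $\int_0^\infty(\bar x(s)^2-x_F^2)\,ds$ (equivalently, control of $\int_0^\infty B^{-2}\,dt$), and then the delicate cancellation itself. I expect essentially all the work to be here; the remaining ingredients — confinement to a compact subset of $(0,+\infty)^2$, monotonicity and convergence, and the two first integrals — are routine.
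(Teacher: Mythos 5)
Your two exact identities are correct and genuinely nice: writing $A(t)=1+\int_0^t\bar y^2$, $B(t)=1+\int_0^t\bar x^2$, the relation $\frac{d}{dt}(\bar x\bar y)=(1-\bar x\bar y)\frac{d}{dt}\log(AB)$ gives $(1-\bar x\bar y)AB\equiv 1-\bar x_0\bar y_0$, and together with monotone bounded convergence and $A,B\to\infty$ this yields $x_Fy_F=1$ more directly than the paper does. The genuine gap is the second equation of \eqref{eq:system_x_F_Y_f}, and it is not just the deferred ``delicate cancellation'': your own identities show that this route cannot close. Indeed, along the true flow \eqref{eq:(1-xy)^2} one has $\frac{d}{dt}\left(\bar y^2-\bar x^2\right)=2\bar x\bar y(1-\bar x\bar y)\left(\frac{1}{B}-\frac{1}{A}\right)$, and for $0<\bar x_0<\bar y_0$ with $\bar x_0\bar y_0<1$ one has $\bar y>\bar x$, hence $A>B$, hence $\bar y^2-\bar x^2$ is strictly increasing for $t>0$. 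Equivalently, your relation $\left(\frac{A}{B}\right)'=\frac{\bar y_0^2-\bar x_0^2}{B^2}$ combined with $\int_0^\infty\frac{\bar x^2}{B^2}\,dt=1$ and $\bar x(t)^2<x_F^2$ for all finite $t$ gives $\frac{y_F^2}{x_F^2}=1+(\bar y_0^2-\bar x_0^2)\int_0^\infty B^{-2}\,dt>1+\frac{\bar y_0^2-\bar x_0^2}{x_F^2}$, i.e. $y_F^2-x_F^2>\bar y_0^2-\bar x_0^2$ strictly. So the quantity $\bar y^2B-\bar x^2A$ is conserved but $\bar y^2-\bar x^2$ is not, and the limit point you construct does not satisfy the second equation of \eqref{eq:system_x_F_Y_f} unless $\bar x_0=\bar y_0$ or $\bar x_0\bar y_0=1$; the cancellation you hope to extract from the second-order asymptotics provably does not occur.

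This is also where your approach departs from the paper's. The paper never works with the true denominators: it sandwiches $(\bar x,\bar y)$ between the solutions $(x^\pm,y^\pm)$ of the auxiliary systems \eqref{eq:sub_sup_solutions_x_y}, in which both equations share a common denominator ($1$, respectively $1+A^2t$), so that ${y^\pm}^2-{x^\pm}^2$ is exactly conserved while $x^\pm y^\pm\to1$, and then transfers the conclusion through the componentwise comparison \eqref{eq:sub_sup_comparison}. Be aware that this comparison is applied to a non-cooperative system ($\partial_y\left[y(1-xy)\right]=1-2xy<0$ once $xy>\frac12$), so it is not covered by the standard quasimonotone comparison principle, and your computation above is in direct tension with its conclusion; this is worth raising with the authors. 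In any case, as a proof of the proposition as stated, your argument is incomplete at the decisive step, and the missing ingredient is not technical bookkeeping but the very mechanism (the paper's frozen-denominator sandwich, or something replacing it) that produces the conservation of $\bar y^2-\bar x^2$ in the limit.
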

\begin{proof}
{\color{black}First, we treat the case where $x_0\,y_0=1$. Then, the function $t\mapsto (x_0,y_0)$ defines a solution of \eqref{eq:(1-xy)^2}. By {\color{black}uniqueness}, it is the only solution and $(x_0,y_0)$ satisfies \eqref{eq:system_x_F_Y_f}.}

Let us now suppose that $x_0\,y_0<1$ (the case where $x_0\,y_0>1$ {\color{black}can be treated following similar arguments}). {\color{black}We define}, for $A>0$ yet to be specified:
\[t_A = \min\left\{\inf\{t\geq 0, (\bar{x}(t),\bar{y}(t)) \notin ]0,A[^2\},\inf\{t\geq 0, \bar{x}(t)\bar{y}(t) \notin ]0,1[\}\right\}.\]
For $t\leq t_A$, we have the following inequalities
\begin{equation}\frac{\bar{y}\left(1-\bar{x}\bar{y}\right)}{1+A^2\,t}\leq \frac{d \bar{x}}{dt}\leq \bar{y}\left(1-\bar{x}\bar{y}\right),\quad \frac{\bar{x}\left(1-\bar{x}\bar{y}\right)}{1+A^2\,t}\leq \frac{d \bar{y}}{dt}\leq \bar{x}\left(1-\bar{x}\bar{y}\right).
\label{eq:inequality_x_bar}
\end{equation}
Let us define $(x^-,y^-)$ and $(x^+,y^+)$ solutions of the {\color{black}following equations}:
\begin{equation}
    \begin{aligned}
    \begin{cases}
    \frac{d x^-(t)}{dt} = \frac{y^-(t)\left(1-x^-(t)y^-(t)\right)}{1+A^2\,t},\quad &\frac{d x^+(t)}{dt} = y^+(t)\left(1-x^+(t)y^+(t)\right),\\
    \frac{d y^-(t)}{dt} = \frac{x^-(t)\left(1-x^-(t)y^-(t)\right)}{1+A^2\,t},\quad &\frac{d y^+(t)}{dt} = x^+(t)\left(1-x^+(t)y^+(t)\right),\\
    (x^-(0),y^-(0){\color{black})} = (\bar{x}_0,\bar{y}_0),\quad &(x^+(0),y^+(0){\color{black})} = (\bar{x}_0,\bar{y}_0).
    \end{cases}
    \end{aligned}
    \label{eq:sub_sup_solutions_x_y}
\end{equation}
By comparison, we deduce that $(x^-,y^-)$ and $(x^+,y^+)$ are respectively subsolution and supersolution of $(\bar{x},\bar{y})$:
\begin{equation}
    \forall t \leq t_A, \quad x^-(t) \leq \bar{x}(t)\leq x^+(t),\quad y^-(t) \leq \bar{y}(t)\leq y^+(t).
    \label{eq:sub_sup_comparison}
\end{equation}
{\color{black}We} define $t_A^+$ by
\[t_A^+ = \min\left\{\inf\{t\geq 0, (x^+(t),y^+(t)) \notin ]0,A[^2\},\inf\{t\geq 0, x^+(t)y^+(t) \notin ]0,1[\}\right\}.\]
{\color{black}We will} show that $x^+y^+$ {\color{black}converges} increasingly toward 1. First one can compute that
\begin{equation*}
    \frac{d\left(x^+y^+\right)}{dt} = \left(1-x^+y^+\right)\left({x^+}^2+{y^+}^2\right).
\end{equation*}
Next, one can notice from \eqref{eq:sub_sup_solutions_x_y} that $x^+$ and $y^+$ both increase on $[0,t_A^+]$. We thus obtain that for $t\in[0,t_A^+]$
\begin{equation*}
    \left(1-x^+y^+\right)\left({x_0^2+y_0^2}\right) \leq \frac{d(x^+y^+)}{dt} \leq 2A^2\left(1-x^+y^+\right).
\end{equation*}
Hence, by comparison, $x^+y^+$ {\color{black}converges} increasingly toward 1.

We next notice {\color{black}thanks to \eqref{eq:sub_sup_solutions_x_y} that}: 
\begin{equation*}
    \frac{1}{2}\frac{d {x^+}^2(t)}{dt} = x^+(t)y^+(t)\left(1-x^+(t)y^+(t)\right) = \frac{1}{2}\frac{d {y^+}^2(t)}{dt},
\end{equation*}
and hence:
\begin{equation*}
    \forall t \leq t_A^+,\quad {{x^{+}}^2}(t) - {y^{+}}^2(t) = \bar{x}_0^2 - \bar{y}_0^2.
\end{equation*}
Therefore, since $(\bar{x}_0,\bar{y}_0) \in [0,2]^2$, and {\color{black}$0< x^+(t)\,y^+(t)< 1$} for $t\leq t_A^+$, the latter implies that, {\color{black}if we choose $A$ large enough}, $x^+$ and $y^+$ {\color{black}remain} uniformly bounded {\color{black}above away from} $A$. {\color{black}Therefore, we can} consider $t_A^+$ arbitrarily large. We deduce that $x^+$ and $y^+$ converge increasingly to $x_F >0$ and $y_F>0$, satisfying \eqref{eq:system_x_F_Y_f}.

We next show that $(x^-,y^-)$ converges toward the same couple $(x_F,y_F)$. Notice that for $t\leq t_A^+$, we have
\[\left(x^-(t),y^-(t)\right) \in ]0,A[^2, \quad x^-(t),y^-(t)\leq 1.\]
Similarly as previously, we show by comparison that $x^-\,y^-$ converges increasingly toward 1, since
\begin{equation*}
    \left(1-x^-y^-\right)\left(\frac{x_0^2+y_0^2}{1+A^2t_A^+}\right) \leq \frac{d(x^-y^-)}{dt} \leq 2A^2\left(1-x^-y^-\right).
\end{equation*}
Next, we notice that we still have: \[\frac{d {x^-}^2(t)}{dt} = \frac{d {y^-}^2(t)}{dt} \implies \forall t \leq t_A^+,\quad {{x^{-}}^2}(t) - {y^{-}}^2(t) = \bar{x}_0^2 - \bar{y}_0^2.\]
We deduce that $x^-$ and $y^-$ converge also increasingly to a solution of \eqref{eq:system_x_F_Y_f}. As \eqref{eq:system_x_F_Y_f} has a unique solution in $\left(\R^*_+\right)^2$, it must be $(x_F,y_F)$.

Finally, we obtain the announced result using \eqref{eq:sub_sup_comparison}.
\end{proof}

{\color{black}\paragraph{Diploid case with the three selection functions.}
Due to the symmetries indicated in \cref{rem:diploid}, the dynamics of the dominant allele \eqref{eq:canonical_diploid} are simpler, because they are limited to occur on the diagonal $x=y$.

1) $m(x,y) = x^2+y^2$.
We obtain that, for $t\geq 0$
\begin{equation*}
        \bar{x}(t)= \bar{y}(t) = \frac{\bar{x}_0}{t+1}.
\end{equation*}
Consequently, the system remains monomorphic and the dominant alleles evolve and converge \bl{toward} $(0,0)$.

2) $m(x,y) = (x+y)^2$.
We obtain that, for $t\geq 0$
\begin{equation*}
        \bar{x}(t) = \bar{y}(t) = \frac{\bar{x}_0}{(t+1)^2}.
\end{equation*}
Consequently, the system remains monomorphic and the dominant alleles evolve and converge \bl{toward} $(0,0)$.

3) $m(x,y) = (1-xy)^2$.
In that case, we deduce from \ref{prop:1-xy} that the dominant alleles of the monomorphic system converge toward $(1,1)$.
}

\paragraph{Numerical analysis.}
{\color{black}Note that \ref{prop:monomorphism} relies on the fact that  equation \eqref{eq:formal_limit_equation_haploid_monom} holds. Due to lack of regularity estimates, in this paper we have proved this property only in a weaker integral form \eqref{eq:limit_equation}. However, we conjecture that this property would hold in a rather general framework.
In \cref{fig:domninant_alleles_trajectories} using numerical simulations, we investigate whether the qualitative results obtained above are consistent in the case of the three examples considered in \cref{fig:monomorphism_robustness}.}
For each selection function above, we display the trajectories of the dominant allelic effects $\bar{x}$ and $\bar{y}$, for 20 numerical \bl{resolutions of} \cref{P_n} with $\varepsilon = 0.01$ (plain lines), with initial conditions uniformly randomized over the square $[-2,2]^2$ (each color corresponds to an initial condition). We confront them to the canonical equations given in \ref{prop:monomorphism}, for the same set of 20 initial conditions (dashed lines). The corresponding trajectories as well as the final states (full circle for the model and cross for the canonical equations) {\color{black}are quite in agreement}.

\begin{figure}
\centering
        \begin{subfigure}{.45\textwidth}
            \centering
            \includegraphics[width=\linewidth]{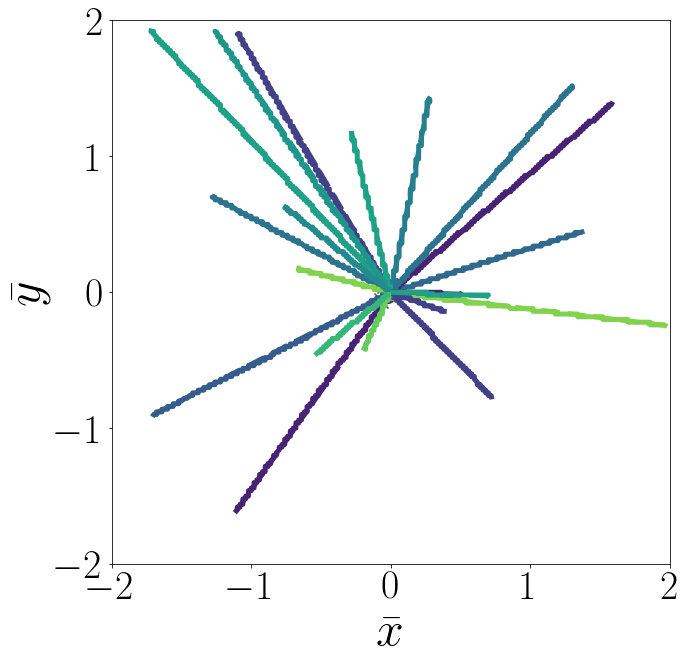}
            \subcaption{$m(x,y) = x^2+y^2.$}
            \label{fig:sum_of_squares}
        \end{subfigure}
        \begin{subfigure}{.45\textwidth}
            \includegraphics[width=\linewidth]{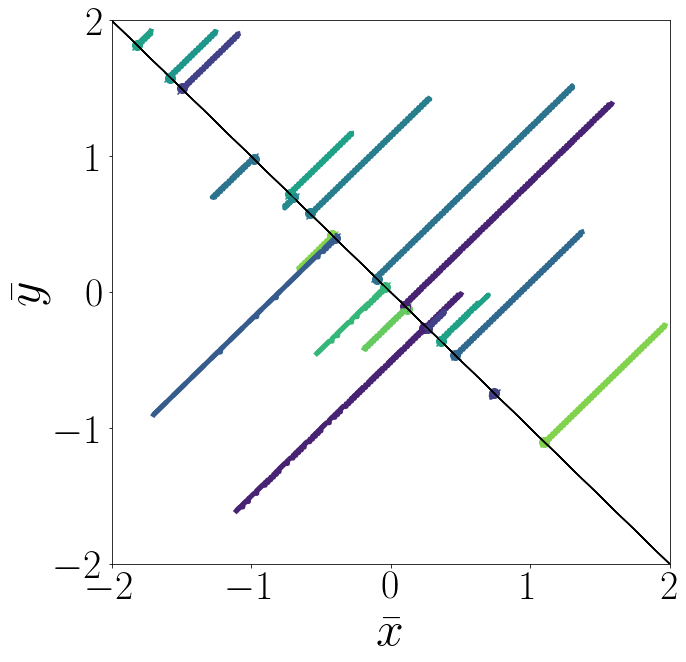}
            \subcaption{$m(x,y) = (x+y)^2.$ (the black line represents the line $x+y=0$).}
            \label{fig:squared_sum}
        \end{subfigure}\\
        \begin{subfigure}{.45\textwidth}
            \centering
            \includegraphics[width=\linewidth]{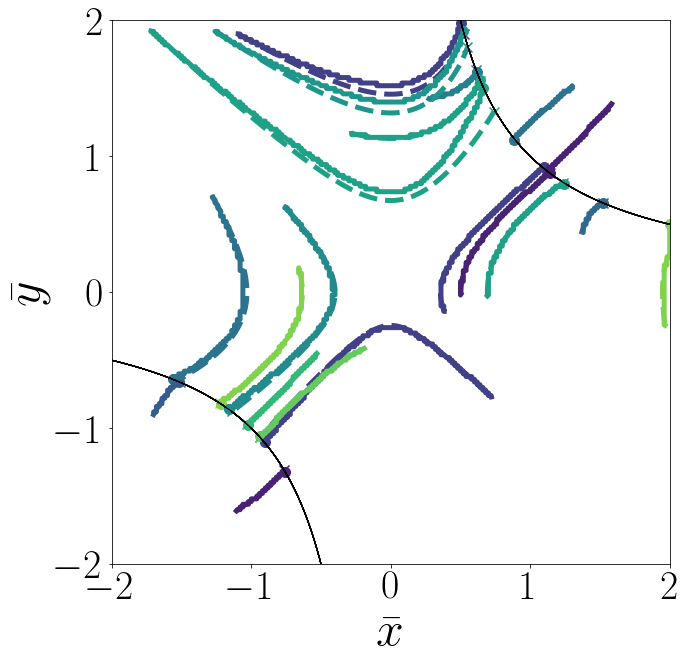}
            \subcaption{$m(x,y) = (1-x\,y)^2.$ (the black line represents the hyperbola $x\,y = 1$).}
            \label{fig:1_xy_squared}
        \end{subfigure}
    \caption{\textbf{Simulated trajectories of the dominant alleles $\bar{x}$ and $\bar{y}$.} The plain lines correspond {\color{Black}to the trajectories of $(x^{\max}(t),y^{\max}(t))$ that realizes the maximum of $n_\varepsilon(t,\cdot,\cdot)$ in the {\color{black}numerical resolution} of \eqref{P_n}}, with $\varepsilon = 0.01$. The dashed lines correspond to the numerical {\color{black}resolution} of the canonical equations given in \ref{prop:monomorphism}. Each color corresponds to one trajectory starting at an initial monomorphic state chosen randomly and uniformly in $[-2,2]^2$. The final states of {\color{black}the trajectories obtained from the discretization} of \eqref{P_n} are indicated by full circles. This figure shows that the qualitative derivation of the section are numerically consistent with the model.}
    \label{fig:domninant_alleles_trajectories}
\end{figure}
\section{Preliminary results on the well-posedness of \ref{P_n}: proof of \cref{thm:well_posedness}}
\label{sec:well_posedness}
{\color{black}In this section, we prove \cref{thm:well_posedness}}.
\paragraph{1. Well-posedness of \ref{P_n}.}  \bl{The proof of this part relies on two lemmas. The first one establishes a priori estimates on solutions of \eqref{P_n} on $[0,T[$ for $T>0$. The second one provides the Lipschitz property which enables us to apply the local Cauchy-Lipschitz theorem to show the existence and uniqueness of a maximal solution on a open subset of $[0,T[$. Finally, we show that this maximal solution is global on $[0,T[$ thanks to the estimates of the first lemma, and extend it on $\R_+$.}
\begin{lem}
Let $T>0$ and $n_\varepsilon\in C^1([0,T]\times I\times J)$ be a solution of \eqref{P_n}. Then, {\color{black}under the assumptions \ref{hyp:m} and \ref{hyp:rho0unifbound}}, we have the following a priori bounds for $t\in[0,T]$
\begin{equation}
\label{eq:bornes_rho}
     \rho_0^- \leq \rho_\varepsilon(t) \leq \rho_0^+, \quad \|\rho^X_\varepsilon(t,\cdot)\|_\infty \leq \|\rho^{X,0}_\varepsilon\|_\infty\,e^{\frac{r\,t}{\varepsilon}},\quad \|\rho^Y_\varepsilon(t,\cdot)\|_\infty \leq \|\rho^{Y,0}_\varepsilon\|_\infty\,e^{\frac{r\,t}{\varepsilon}},
\end{equation}
\begin{equation}
\label{eq:borne_n}
   \begin{aligned}
          \|n_\varepsilon(t,\cdot,\cdot)\|_\infty 
         &\leq N_{T}:=\e^{\frac{rT}{2\varepsilon}}\|n_\varepsilon(0,\cdot,\cdot)\|_\infty +e^{\frac{2rT}{\varepsilon}}\frac{\|\rho^{X,0}_\varepsilon\|_\infty\,\|\rho^{Y,0}_\varepsilon\|_\infty}{\rho_0^-}(e^\frac{rT}{2\varepsilon}-1),
    \end{aligned} 
\end{equation}
\begin{equation}
\label{eq:borne_dxn}
    \begin{aligned}
         \|\partial_x n_\varepsilon(t,\cdot,\cdot)\|_\infty \leq N^x_{T}:=\left(\|\partial_x n_\varepsilon^0\|_\infty+N_{T}\,\frac{T}{\varepsilon}\left\|\partial_x m\right\|_\infty\right)\,e^{\frac{\left\|\rho_\varepsilon^{X,0}\right\|_\infty\,|J|}{2\rho_0^-}e^{\frac{r\,T}{\varepsilon}}+\frac{rT}{2\varepsilon}},
    \end{aligned}
\end{equation}
\begin{equation}
\label{eq:borne_dyn}
    \begin{aligned}
         \|\partial_y n_\varepsilon(t,\cdot,\cdot)\|_\infty \leq N^y_{T}:=\left(\|\partial_y n_\varepsilon^0\|_\infty+N_{T}\,\frac{T}{\varepsilon}\left\|\partial_y m\right\|_\infty\right)\,e^{\frac{\left\|\rho_\varepsilon^{Y,0}\right\|_\infty\,|I|}{2\rho_0^-}e^{\frac{r\,T}{\varepsilon}}+\frac{rT}{2\varepsilon}}.
    \end{aligned}
\end{equation}
\label{lem:TM}

\label{lem:TM}
\end{lem}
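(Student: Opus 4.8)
The plan is to prove the five families of bounds in the order stated, each one feeding into the next; throughout, $n_\varepsilon$ is the given $C^1$ solution on $[0,T]\times I\times J$, so all suprema below are finite and attained. For the bounds on $\rho_\varepsilon$, I would integrate \eqref{P_n} over $I\times J$. Since $\int_I \rho^Y_\varepsilon(t,x)\,dx = \int_J \rho^X_\varepsilon(t,y)\,dy = \rho_\varepsilon(t)$ and $\iint n_\varepsilon = \rho_\varepsilon$, the whole reproduction term integrates to $r\rho_\varepsilon(t)$, yielding the scalar ODE $\varepsilon\rho_\varepsilon' = r\rho_\varepsilon - \iint m\,n_\varepsilon - \kappa\rho_\varepsilon^2$. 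Because $0 \le \iint m\,n_\varepsilon \le \|m\|_\infty\rho_\varepsilon$, this gives $(r-\|m\|_\infty)\rho_\varepsilon - \kappa\rho_\varepsilon^2 \le \varepsilon\rho_\varepsilon' \le r\rho_\varepsilon - \kappa\rho_\varepsilon^2$. The constants $\rho_0^+$ and $\rho_0^-$ are equilibria of the two comparison ODEs, and $\rho_\varepsilon^0$ lies strictly between $\rho_0^-$ and $\rho_0^+$ by \ref{hyp:rho0unifbound}; a standard scalar comparison argument — inspect the first time, if any, that $\rho_\varepsilon$ would reach either barrier, where the relevant differential inequality forces it back — then gives $\rho_0^- \le \rho_\varepsilon(t) \le \rho_0^+$. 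Here \ref{hyp:m} enters only through $\|m\|_\infty < r$, which makes $\rho_0^- > 0$.

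\emph{Bounds on $\rho^X_\varepsilon$, $\rho^Y_\varepsilon$ and $n_\varepsilon$.} From \eqref{eq:rhoX_rhoY}, the positivity of $m$, $n_\varepsilon$ and $\kappa\rho_\varepsilon$ gives, for each fixed $y$, $\varepsilon\partial_t\rho^X_\varepsilon(t,y) \le (r-\kappa\rho_\varepsilon(t))\rho^X_\varepsilon(t,y) \le r\rho^X_\varepsilon(t,y)$, so Gronwall yields $\rho^X_\varepsilon(t,y) \le \rho^{X,0}_\varepsilon(y)\,e^{rt/\varepsilon}$; taking the supremum in $y$ gives the stated bound, and $\rho^Y_\varepsilon$ is symmetric. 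For $n_\varepsilon$, I would rewrite \eqref{P_n} as $\varepsilon\partial_t n_\varepsilon = \tfrac r2\tfrac{\rho^Y_\varepsilon\rho^X_\varepsilon}{\rho_\varepsilon} + \bigl(\tfrac r2 - m - \kappa\rho_\varepsilon\bigr)n_\varepsilon$ and integrate in $t$ at fixed $(x,y)$ via Duhamel's formula. Since $\tfrac r2 - m - \kappa\rho_\varepsilon \le \tfrac r2$, every exponential factor is bounded by $e^{r(t-s)/(2\varepsilon)}$, and by the previous steps $\tfrac{\rho^Y_\varepsilon(s,x)\rho^X_\varepsilon(s,y)}{\rho_\varepsilon(s)} \le \tfrac{\|\rho^{X,0}_\varepsilon\|_\infty\|\rho^{Y,0}_\varepsilon\|_\infty}{\rho_0^-}e^{2rT/\varepsilon}$ on $[0,T]$; the elementary identity $\int_0^t e^{r(t-s)/(2\varepsilon)}\,ds = \tfrac{2\varepsilon}{r}(e^{rt/(2\varepsilon)}-1)$ then produces exactly $N_T$.

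\emph{Bounds on $\partial_x n_\varepsilon$, $\partial_y n_\varepsilon$.} Differentiating \eqref{P_n} in $x$ gives $\varepsilon\partial_t\partial_x n_\varepsilon = \tfrac r2\tfrac{\partial_x\rho^Y_\varepsilon\,\rho^X_\varepsilon}{\rho_\varepsilon} - \partial_x m\,n_\varepsilon + \bigl(\tfrac r2 - m - \kappa\rho_\varepsilon\bigr)\partial_x n_\varepsilon$. The crucial observation is $\partial_x\rho^Y_\varepsilon(t,x) = \int_J \partial_x n_\varepsilon(t,x,y')\,dy'$, whence $|\partial_x\rho^Y_\varepsilon(t,x)| \le |J|\,\|\partial_x n_\varepsilon(t,\cdot,\cdot)\|_\infty$; so $q(t) := \|\partial_x n_\varepsilon(t,\cdot,\cdot)\|_\infty$ satisfies the closed Gronwall inequality $\varepsilon q'(t) \le \bigl(\tfrac r2 + \tfrac r2\tfrac{\|\rho^{X,0}_\varepsilon\|_\infty|J|}{\rho_0^-}e^{rt/\varepsilon}\bigr)q(t) + \|\partial_x m\|_\infty N_T$, using the bounds on $\rho^X_\varepsilon/\rho_\varepsilon$ and on $n_\varepsilon$ from the previous step. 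Integrating, the accumulated exponent is $\tfrac{rt}{2\varepsilon} + \tfrac{\|\rho^{X,0}_\varepsilon\|_\infty|J|}{2\rho_0^-}(e^{rt/\varepsilon}-1)$, and crudely majorizing it and the source integral on $[0,T]$ gives $N^x_T$; $N^y_T$ follows by the symmetric computation in $y$.

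The main technical point — and the reason for this cascade — is the last step: differentiating the nonlocal reproduction term reinjects $\partial_x n_\varepsilon$ through $\partial_x\rho^Y_\varepsilon$, so $\partial_x n_\varepsilon$ cannot be controlled directly and one must close a Gronwall loop on the sup-norm $q(t)$. I would justify that this is legitimate because $n_\varepsilon \in C^1([0,T]\times I\times J)$ with $I\times J$ compact, so $q$ is locally Lipschitz, differentiable almost everywhere, and the differential inequality holds at its points of differentiability (equivalently, one may argue entirely on the Duhamel/integral form of the equation for $\partial_x n_\varepsilon$, which becomes a linear Volterra inequality). Everything else reduces to scalar ODE comparison and Gronwall's lemma; the slightly unwieldy shapes of $N_T$, $N^x_T$, $N^y_T$ stem only from choosing deliberately crude but $\varepsilon$-explicit majorizations of the time integrals.
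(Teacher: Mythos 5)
Your proposal is correct and follows essentially the same route as the paper's proof: the scalar comparison (logistic) argument for $\rho_\varepsilon$, Gronwall for $\rho^X_\varepsilon,\rho^Y_\varepsilon$, Duhamel's formula for $n_\varepsilon$, and a Gronwall loop on $\|\partial_x n_\varepsilon(t,\cdot,\cdot)\|_\infty$ after differentiating the equation, noting that $\partial_x\rho^Y_\varepsilon$ reinjects $\partial_x n_\varepsilon$. The only cosmetic difference is that the paper compares $\rho_\varepsilon$ with the explicit solution of the logistic ODE rather than using your barrier-at-equilibria phrasing, which amounts to the same comparison principle.
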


{\color{black}The proof of \cref{lem:TM} relies on classical {\color{black}computations} and is left to be consulted in \cref{app:proof_lem_TM}.} 

\bl{Let $T>0$. Before stating the next lemma, let us define, for $n \in C^1(I\times J,\, \R_+)$
\begin{equation*}
    \rho(n):=\iint_{I\times J} n(x,y)\,dx\,dy,\quad \rho^X(n) (y) := \int_{I} n(x,y)\,dx, \quad \rho^Y(n) (x) := \int_{J} n(x,y)\,dy.
\end{equation*}
We also define
\begin{align*}
    \mathcal{A}^0 = & \left\{n \in \bl{C^1(I\times J,\, \R_+)}\,|\, \rho(n)\in[\rho_0^-,\rho_0^+]\right\},
\end{align*}
which is a closed subspace of $ \mathcal{X}:= \bl{C^1(I\times J,\, \R_+)}$ and has a Banach space structure with the norm $\|.\|_\mathcal{X} := \|.\|_\infty +  \|\partial_x\cdot\|_\infty+\|\partial_y\cdot\|_\infty $. Finally, let us also define
\begin{equation}
f : 
\begin{cases}
\begin{aligned}
 &\mathcal{A}^0  \rightarrow   \mathcal{X} \\
 &n  \mapsto  f(n) : (x,y) \mapsto \frac{r}{2}\frac{\rho^X(n)(y) \, \rho^Y(n)(x)}{\rho(n)} - \left[m(x,y)+ \kappa \rho(n) - \frac{r}{2}\right]n(x,y).
\end{aligned}
\end{cases}
\label{eq:f}
\end{equation}
Next, we state the following lemma, whose proof can be found in \cref{app:proof_lem_contraction}.
\begin{lem}
Under the assumption \ref{hyp:m}, $f$ is locally Lipschitz on $\mathcal{A}^0$.
\label{lem:contraction}
\end{lem}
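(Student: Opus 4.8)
The plan is to show that the map $f$ defined in \eqref{eq:f} is Lipschitz on every bounded subset of $\mathcal{A}^0$, which is exactly the statement that $f$ is locally Lipschitz. Fix a bounded set: say we restrict to $n$ with $\|n\|_{\mathcal X}\le R$ for some $R>0$. Since $\rho(n)\in[\rho_0^-,\rho_0^+]$ on all of $\mathcal A^0$ by definition, the denominator $\rho(n)$ is bounded away from zero uniformly, so the only genuinely nonlinear piece, the fraction $\rho^X(n)(y)\,\rho^Y(n)(x)/\rho(n)$, is a smooth (bilinear-over-linear) function of the linear functionals $n\mapsto\rho^X(n)$, $n\mapsto\rho^Y(n)$, $n\mapsto\rho(n)$, each of which is bounded linear from $\mathcal X$ to the relevant sup-norm space. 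The $W^{1,\infty}$-type norm $\|\cdot\|_{\mathcal X}$ controls all the derivatives we need, because $\partial_x$ and $\partial_y$ of the fraction only involve $\partial_x\rho^Y(n)(x)$ and $\partial_y\rho^X(n)(y)$, which are again bounded linear functions of $n$ (differentiation under the integral sign).

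Concretely, I would proceed term by term. Write $f(n)=f_1(n)-f_2(n)$ with $f_1(n)(x,y)=\tfrac r2\,\rho^X(n)(y)\rho^Y(n)(x)/\rho(n)$ and $f_2(n)(x,y)=\big[m(x,y)+\kappa\rho(n)-\tfrac r2\big]n(x,y)$. For the difference $f_2(n)-f_2(\tilde n)$ one inserts and subtracts a mixed term:
\begin{align*}
f_2(n)-f_2(\tilde n) &= \big[m+\kappa\rho(n)-\tfrac r2\big](n-\tilde n) + \kappa\,\big(\rho(n)-\rho(\tilde n)\big)\,\tilde n,
\end{align*}
and one estimates each summand in $\mathcal X$-norm. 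The first summand is controlled using $\|m\|_{C^1}$, $|\rho(n)|\le\rho_0^+$, and $\|n-\tilde n\|_{\mathcal X}$ (the product rule for $\partial_x,\partial_y$ brings in $\|\partial_x m\|_\infty$ and $\|\partial_x(n-\tilde n)\|_\infty$, all harmless); the second summand uses $|\rho(n)-\rho(\tilde n)|\le |I\times J|\,\|n-\tilde n\|_\infty$ and the bound $\|\tilde n\|_{\mathcal X}\le R$. For $f_1$, one similarly telescopes over the three arguments $\rho^X,\rho^Y,\rho$: writing $a=\rho^X(n)$, $b=\rho^Y(n)$, $c=\rho(n)$ and likewise $\tilde a,\tilde b,\tilde c$, use
\begin{align*}
\frac{ab}{c}-\frac{\tilde a\tilde b}{\tilde c} = \frac{(a-\tilde a)b}{c} + \frac{\tilde a(b-\tilde b)}{c} + \tilde a\tilde b\,\frac{\tilde c - c}{c\,\tilde c},
\end{align*}
and bound each term using $|c|,|\tilde c|\ge\rho_0^-$, the sup-norm bounds $\|\rho^X(n)\|_\infty,\|\rho^Y(n)\|_\infty\le C_R$ coming from $\|n\|_{\mathcal X}\le R$, and the Lipschitz-in-$n$ estimates $\|\rho^X(n)-\rho^X(\tilde n)\|_\infty\le |I|\,\|n-\tilde n\|_\infty$, $\|a-\tilde a\|_{C^1}\lesssim\|n-\tilde n\|_{\mathcal X}$, etc. Since $\mathcal X$ is a norm on $C^1$ and $f_1(n)$ depends on $x$ only through $b=\rho^Y(n)(x)$ and on $y$ only through $a=\rho^X(n)(y)$, the derivative bounds reduce to the same linear functional estimates applied to $\partial_x\rho^Y$ and $\partial_y\rho^X$. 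Assembling all the pieces gives a Lipschitz constant depending only on $R$, $\rho_0^-$, $\rho_0^+$, $r$, $\kappa$, $\|m\|_{C^1}$, and $|I\times J|$, which is precisely local Lipschitzness.

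The only point requiring a little care — and the closest thing to an obstacle — is making sure the $C^1$ (not merely $C^0$) structure is respected, i.e. that the image $f(n)$ genuinely lies in $\mathcal X=C^1(I\times J,\R_+)$ and that the $\mathcal X$-norm difference is controlled: this is where one needs $m\in C^1$ (assumption \ref{hyp:m}) so that $\partial_x\big[m(x,y)n(x,y)\big]$ makes sense and is continuous, and where one uses that differentiation commutes with the partial integrations defining $\rho^X,\rho^Y$ (legitimate since $n\in C^1$ on the compact box $I\times J$). Positivity of $f(n)$ is not actually part of the claim to be proved — only the Lipschitz bound — so one does not need to worry about the codomain's $\R_+$ constraint beyond noting that $\mathcal X$ is regarded as the ambient Banach space; in fact $f$ maps into $C^1(I\times J,\R)$, which is fine since the lemma's conclusion is purely a metric (Lipschitz) statement and the Cauchy–Lipschitz argument in the well-posedness proof handles positivity separately via the a priori bounds of \cref{lem:TM}. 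Everything else is a routine "add and subtract" estimate, so I would not belabor the constants.
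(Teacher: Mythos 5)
Your proposal is correct and follows essentially the same route as the paper's proof: both isolate the two genuinely nonlinear terms (the fraction $\rho^X(n)\,\rho^Y(n)/\rho(n)$ and the $\rho(n)\,n$ part) and control them by add-and-subtract estimates on a bounded set, using the uniform bounds $\rho_0^-\leq\rho(n)\leq\rho_0^+$ built into $\mathcal{A}^0$, the boundedness of the linear maps $n\mapsto\rho(n),\rho^X(n),\rho^Y(n)$, and the fact that the $\|\cdot\|_{\mathcal{X}}$ norm (a Banach-algebra norm on $C^1$) controls products and the derivative terms. The paper phrases the product estimates via the Banach algebra structure where you track $\partial_x$, $\partial_y$ explicitly, but this is only a cosmetic difference.
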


Finally, the definition of $f$ in \eqref{eq:f} implies that \ref{P_n} can be reformulated as the following Cauchy problem:

\begin{equation*}
\tag{$P\left(n_\varepsilon\right)$}
    \begin{aligned}
    \begin{cases}
    \partial_tn_\varepsilon(t,x,y) = \frac{1}{\varepsilon} f(n(t,\cdot,\cdot))(x,y),\qquad  (t,x,y) \in \R_+\times I\times J,\\
    \\
    n_\varepsilon(0,x,y) = n^0_\varepsilon(x,y).
    \end{cases}
    \end{aligned}
\end{equation*}
For $T>0$, under the assumption \ref{hyp:rho0unifbound}, and thanks to \cref{lem:TM} and \cref{lem:contraction}, we apply the local Cauchy-Lipschitz theorem to show that there exists a unique maximal solution $n_\varepsilon \in C^1(\Omega\times I \times J)$ to \ref{P_n}, where $\Omega$ is an open subset of $[0,T[$. Next, the estimates on $\left\|n_\varepsilon(t,\cdot,\cdot)\right\|_\mathcal{X}$ stated in \cref{lem:TM} and uniform for $t\in[0,T[$ imply that $\Omega=[0,T[$, which means that the maximal solution is global on $[0,T[$. As the latter holds for any $T>0$, we deduce that that there exists a unique solution $n_\varepsilon \in C^1(\R_+\times I \times J)$ to \ref{P_n}}.

~\paragraph{2. Weak convergence of $(n_\varepsilon)$.}
From \cref{lem:TM}, for $\varepsilon>0$ and $t\in[0,T]$, {\color{black}we have}:
\[\left\|n_\varepsilon(t,\cdot,\cdot)\right\|_{L^1(I\times J)} \leq \rho^+_0.\]
Consequently, $(n_\varepsilon)$ is bounded in $L^\infty\left([0,T],L^1(I\times J)\right)$. Hence, there exists a subsequence that converges in $L^\infty\left(w^*-[0,T],M(I\times J)\right)$ to a measure $n$.

\section{Proof of \ref{prop:nu} and regularity estimates on $u_\varepsilon$}
\label{sec:lipschitz}
In this section, we provide the proofs of the {\color{black}regularity estimates that will be used {\color{black}in the proof of} \cref{thm:convergence_u}.}

\subsection{Proof of \ref{prop:nu}}
In this subsection, we prove the {\color{black}\ref{prop:nu}, which is a key step to prove the additive separation of variables for} $u$ (see \cref{thm:convergence_u}).

Let $t\in [0,T]$, $(x,y) \in I\times J$. We differentiate \bl{$\nu_\varepsilon=\frac{\rho^X_\varepsilon\,\rho^Y_\varepsilon}{n_\varepsilon\rho_\varepsilon}$} with regard to $t$ to find:
\begin{equation}
\label{eq:calcul_dt_nu}
    \begin{aligned}
    \varepsilon\,\partial_t\nu_\varepsilon (t,x,y) &= \frac{\rho^X_\varepsilon\,\varepsilon\,\partial_t\rho^Y_\varepsilon +\rho^Y_\varepsilon\,\varepsilon\,\partial_t\rho^X_\varepsilon}{n_\varepsilon\,\rho_\varepsilon} -\left(\frac{\varepsilon\,\partial_t n_\varepsilon}{n_\varepsilon}+\frac{\varepsilon\,\partial_t \rho_\varepsilon}{\rho_\varepsilon}\right)\nu_\varepsilon\\
    &= 2\,(r-\kappa\,\rho_\varepsilon)\,\nu_\varepsilon - {\color{black}\int_I m(x',y)\,\frac{n_\varepsilon(x',y)\,\rho_\varepsilon^Y(x)}{n_\varepsilon(x,y)\rho_\varepsilon}\,dx'-\int_Jm(x,y')\,\frac{n_\varepsilon(x,y'),\rho_\varepsilon^X(y)}{n_\varepsilon(x,y)\rho_\varepsilon}\,dy'}\\
    &- \nu_\varepsilon\left(\frac{r}{2}\nu_\varepsilon+\frac{r}{2} - m(x,y) -\kappa\,\rho_\varepsilon + r-\kappa\,\rho_\varepsilon - \iint_{I\times J}m(x,y)\,\frac{n_\varepsilon(x,y)}{\rho_\varepsilon}dx\,dy\right)\\
    & = \frac{r}{2}\nu_\varepsilon(1-\nu_\varepsilon) + \nu_\varepsilon\left(m(x,y)+\iint_{I\times J}m(x,y)\,\frac{n_\varepsilon(x,y)}{\rho_\varepsilon}dx\,dy\right)\\&- \iint_{I\times J}\left(m(x',y)+m(x,y')\right)\frac{n_\varepsilon(x',y)\,n_\varepsilon(x,y')}{n_\varepsilon(x,y)\,\rho_\varepsilon}\,dx'\,dy'.
\end{aligned}
\end{equation}
Since $m\geq0$ and $n_\varepsilon,\rho^X_\varepsilon,\rho^Y_\varepsilon,\rho_\varepsilon>0$, we get:
\begin{equation*}
     \varepsilon\,\partial_t\nu_\varepsilon (t,x,y)\leq \left(\frac{r}{2}+2\,\|m\|_\infty\right)\,\nu_\varepsilon - \frac{r}{2}\,\nu_\varepsilon^2.
\end{equation*}
Hence:
\begin{align*}
    \nu_\varepsilon(t,x,y) &\leq \frac{1}{\frac{1}{\nu^0_\varepsilon(x,y)}\,e^{- \left(\frac{r}{2}+2\|m\|_\infty\right)\,\frac{t}{\varepsilon}}+\frac{r}{\left(r+4\|m\|_\infty\right)}\left(1-e^{- \left(\frac{r}{2}+2\|m\|_\infty\right)\,\frac{t}{\varepsilon}}\right)}\\
    &\leq \frac{1}{\min\left(\frac{1}{\nu^0_\varepsilon(x,y)},\frac{r}{\left(r+4\|m\|_\infty\right)} \right)} \leq \max\left(\|\nu_\varepsilon^0\|_\infty,\, \left(1+\frac{4\|m\|_\infty}{r}\right)\right)\,\leq \nu_M.
\end{align*} Similarly, from \eqref{eq:calcul_dt_nu}, we have:
\[\varepsilon\,\partial_t\nu_\varepsilon (t,x,y)\geq \left(\frac{r}{2}-2\|m\|_\infty \right)\nu_\varepsilon - \frac{r}{2}\nu_\varepsilon^2.\]
Recall from \ref{hyp:m} that: $r>2\|m\|_\infty $. Hence:
\begin{align*}
    \nu_\varepsilon(t,x,y) &\geq \frac{1}{\frac{1}{\nu^0_\varepsilon(x,y)}\,e^{- \left(\frac{r}{2}-2\,\|m\|_\infty\right)\,\frac{t}{\varepsilon}}+\frac{r}{ r-4\,\|m\|_\infty}\left(1-e^{- \left(\frac{r}{2}-2\,\|m\|_\infty\right)\,\frac{t}{\varepsilon}}\right)}\\
    &\geq \frac{1}{\max\left(\frac{1}{\nu^0_\varepsilon(x,y)},\frac{r}{r-4\|m\|_\infty} \right)} \geq \min\left(\|\nu_\varepsilon^0\|_\infty,\, \left(1
    -\frac{4\|m\|_\infty}{r}\right)\right) = \nu_m.
\end{align*}

\subsection{Regularity estimates on $u_\varepsilon$}
In this subsection, we prove the regularity estimates that underlie the convergence {\color{black}of $u_\varepsilon$ based on the Arzela-Ascoli theorem}.
\begin{prop}
\label{prop:uniform_estimates_u}
{\color{black}Assume that \ref{hyp:m}, \ref{hyp:u0W1infty} and \ref{hyp:rho0unifbound} hold.} Let $\varepsilon>0$, $T>0$, and $u_\varepsilon \in C^1([0,T]\times I\times J)$ be the solution of \ref{P_u}. Then, $u_\varepsilon$ is Lipschitz continuous in time and in space,  and is bounded in $C([0,T]\times I\times J)$, all the bounds being uniform with regard to $\varepsilon$.
\end{prop}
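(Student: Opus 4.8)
The plan is to treat the three assertions separately: the uniform $C^0$ bound and the Lipschitz bound in time are essentially immediate from the equation in \ref{P_u} together with \ref{prop:nu} and \cref{thm:well_posedness}, whereas the Lipschitz bound in space is the substantive point and rests on a maximum principle for the gradient of $u_\varepsilon$.

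First, for the $C^0$ and time-Lipschitz bounds I would simply read off the equation $\partial_t u_\varepsilon = \tfrac r2 \nu_\varepsilon - \bigl(m + \kappa\rho_\varepsilon - \tfrac r2\bigr)$. By \ref{prop:nu} one has $\nu_m \le \nu_\varepsilon \le \nu_M$, by \cref{thm:well_posedness} one has $\rho_0^- \le \rho_\varepsilon \le \rho_0^+$, and by \ref{hyp:m} one has $0 \le m \le \|m\|_\infty$; hence $|\partial_t u_\varepsilon|$ is bounded by a constant $C_1$ independent of $\varepsilon$, which is the time-Lipschitz bound. Integrating in time and using $\|u^0_\varepsilon\|_\infty \le M$ from \ref{hyp:u0W1infty} then gives $\|u_\varepsilon\|_{C^0([0,T]\times I\times J)} \le M + TC_1$, uniformly in $\varepsilon$.

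The heart of the proof is the gradient estimate. Writing $p_\varepsilon := \partial_x u_\varepsilon$ and using $\partial_x n_\varepsilon = (p_\varepsilon/\varepsilon)\,n_\varepsilon$ (so that $\partial_x \rho^Y_\varepsilon(t,x) = \int_J (p_\varepsilon(t,x,y')/\varepsilon)\,n_\varepsilon(t,x,y')\,dy'$), a short computation should reduce $\partial_x\nu_\varepsilon$ to $\tfrac{\nu_\varepsilon}{\varepsilon}\bigl(\langle p_\varepsilon(t,x,\cdot)\rangle_{t,x} - p_\varepsilon(t,x,y)\bigr)$, where $\langle\,\cdot\,\rangle_{t,x}$ is the average against the probability density $n_\varepsilon(t,x,\cdot)/\rho^Y_\varepsilon(t,x)$ on $J$. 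Differentiating the equation for $u_\varepsilon$ in $x$ then yields $\partial_t p_\varepsilon = \tfrac{r\nu_\varepsilon}{2\varepsilon}\bigl(\langle p_\varepsilon(t,x,\cdot)\rangle_{t,x} - p_\varepsilon(t,x,y)\bigr) - \partial_x m(x,y)$: after differentiation the reproduction operator has become a relaxation of $p_\varepsilon(t,x,y)$ toward a weighted mean of $p_\varepsilon(t,x,\cdot)$ over the complementary variable. I would exploit this via a perturbed-supersolution argument: for $\psi_\delta(t) := M + t\|\partial_x m\|_\infty + \delta(1+t)$ with $\delta>0$, at a first contact time $t_1\in(0,T]$ and contact point $(x_1,y_1)$ one has $p_\varepsilon(t_1,x_1,y_1)\ge p_\varepsilon(t_1,x_1,y')$ for all $y'\in J$, so the average is $\le p_\varepsilon(t_1,x_1,y_1)$ and, since $\nu_\varepsilon>0$, the singular term is $\le 0$; hence $\partial_t p_\varepsilon(t_1,x_1,y_1)\le \|\partial_x m\|_\infty < \psi_\delta'(t_1)$, contradicting that $t\mapsto p_\varepsilon(t,x_1,y_1)-\psi_\delta(t)$ is $\le 0$ on $[0,t_1]$ with equality at $t_1$. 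Letting $\delta\to 0$, arguing symmetrically from below, and repeating for $\partial_y u_\varepsilon$ with $\|\partial_y m\|_\infty$, one obtains $\|\partial_x u_\varepsilon\|_\infty,\|\partial_y u_\varepsilon\|_\infty \le M + T\max(\|\partial_x m\|_\infty,\|\partial_y m\|_\infty)$, uniformly in $\varepsilon$. The joint continuity and $C^1$-in-$t$ regularity of $p_\varepsilon$ needed to run this argument follow from $n_\varepsilon\in C^1(\R_+\times I\times J)$ (\cref{thm:well_posedness}), positivity of $n_\varepsilon$, and differentiation under the time integral in the Duhamel form of the equation for $u_\varepsilon$.

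The main obstacle is precisely this gradient estimate: the Hopf--Cole scaling leaves an $O(1/\varepsilon)$ factor in front of the nonlocal term in the equation for $p_\varepsilon$, so there is no hope of bounding $\partial_t p_\varepsilon$ by brute force; the argument only works because, once differentiated, the reproduction operator collapses to a \emph{dissipative} relaxation toward an average in the other variable, which has the favourable sign at spatial extrema. A pleasant consequence of this structure is that the argument never differentiates $p_\varepsilon$ in space, so — in contrast to the asexual reaction--diffusion setting — no diffusion term and no boundary contribution on $\partial(I\times J)$ enter the analysis.
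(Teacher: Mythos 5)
Your proof is correct and, for the substantive part (the spatial Lipschitz bound), it follows essentially the paper's argument: differentiating the equation in $x$ turns the reproduction term into a nonnegative kernel acting on $\partial_x u_\varepsilon(t,x,y')-\partial_x u_\varepsilon(t,x,y)$ --- your rewriting as a relaxation of $p_\varepsilon$ toward the weighted average $\langle p_\varepsilon(t,x,\cdot)\rangle_{t,x}$ is exactly the same object --- and a first-contact-time argument at a spatial maximum kills the $O(1/\varepsilon)$ term by sign; your perturbed barrier $\psi_\delta$ plays the role of the paper's $w_\varepsilon=\partial_x u_\varepsilon-2\|\partial_x m\|_\infty t-\|\partial_x u^0_\varepsilon\|_\infty-1$ (the $\delta(1+t)$ tilt even handles the degenerate case $\partial_x m\equiv 0$ slightly more cleanly than the paper's strict inequality). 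The time-Lipschitz bound is identical, and note that it uses the upper bound $\nu_M$ from \ref{prop:nu}, so like the paper you are implicitly invoking \ref{hyp:nu0} although it is not listed among the hypotheses. Where you genuinely diverge is the $C^0$ bound: you simply integrate $\|\partial_t u_\varepsilon\|_\infty\le C_1$ from the initial bound \ref{hyp:u0W1infty}, which is sufficient for the proposition as stated, whereas the paper proves the sharper \cref{lem:bounds_u} ($-\delta<\max u_\varepsilon<\delta$ for $\varepsilon$ small, obtained from $\rho_0^-\le\rho_\varepsilon\le\rho_0^+$ combined with the spatial Lipschitz estimate). Keep in mind that this finer information ($\max u_\varepsilon\to 0$) is precisely what is used later to obtain $u\le 0$ and the constraint $\max u^X(t,\cdot)=\max u^Y(t,\cdot)=0$ in \cref{thm:convergence_u}, so your shortcut proves the proposition but does not substitute for \cref{lem:bounds_u} in the overall scheme of the paper.
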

\begin{proof}[Proof of \ref{prop:uniform_estimates_u}.]
~\paragraph{Lipschitz bounds in time.}

\bl{From \ref{hyp:m} and \cref{thm:well_posedness}, $\nu_\varepsilon$ is the only term in \ref{P_u} whose boundness is not a priori ensured. However,} \ref{prop:nu} provides an upper bound for $\nu_\varepsilon$ which implies directly the following uniform Lipschitz bound in time on $u_\varepsilon$:
\[\|\partial_t u_\varepsilon\|_\infty \leq \|m\|_\infty +\kappa\,\rho_0^+ + {\color{black}\frac{r}{2}\left(1+\nu_M\right)}.\]

~\paragraph{Lipschitz bounds in space.}

In this paragraph, we rely on a maximum principle to show the following inequalities for all $(t,x,y) \in [0,T]\times I\times J$:
\begin{equation}
\label{eq:lip_space}
    \left|\partial_x u_\varepsilon(t,x,y)\right| <2\|\partial_x m\|_\infty T +\|\partial_xu^0_\varepsilon\|_\infty + 1, \quad \left|\partial_y u_\varepsilon(t,x,y)\right| <2\|\partial_y m\|_\infty T +\|\partial_y u^0_\varepsilon\|_\infty + 1.
\end{equation}
The latter together with \ref{hyp:u0W1infty} implies that $(u_\varepsilon)$ is uniformly Lipschitz continuous in space.

Let us show \eqref{eq:lip_space}.
For $t\in [0,T], (x,y,x',y') \in \mathring{I}^4$, define $\Delta_\varepsilon (x',y',x,y,t) = u_\varepsilon(t,x',y)+u_\varepsilon(t,x,y') - u_\varepsilon(t,x,y)$. Differentiating the equation on $u_\varepsilon$ from \ref{P_u} with regard to $x${\color{black}, we obtain}:
\[\partial_t \,\partial_x u_\varepsilon = -\partial_x m + \frac{r}{2\rho_\varepsilon}\displaystyle \iint_{I\times J}\frac{1}{\varepsilon}\left[\partial_x u_\varepsilon (x,y') - \partial_x u_\varepsilon(x,y)\right] e^{\Delta_\varepsilon (x',y',x,y,t)}dx'\,dy'.\]
Let us define for $(x,y) \in I\times J$:
\[w_\varepsilon(t,x,y) = \partial_x u_\varepsilon(t,x,y) - 2 \|\partial_x m\|_\infty \,t - \|\partial_xu^0_\varepsilon\|_\infty - 1.\]
First, we have that for all $(x,y) \in I\times J$: $w_\varepsilon(0,x,y) <0$. Next, let us assume that there exists $t>0$ such that $\underset{I\times J}{\max}\; w_\varepsilon(t,\cdot) \geq 0$. Then we can define:
\[t_0 = \inf \{t>0, \underset{I\times J}{\max}\; w_\varepsilon(t,\cdot)\geq 0\}.\]
By continuity of $\partial_x u_\varepsilon$ {\color{black}at $t=0$} and compactness of $I$, we have: $t_0>0$. Let $(x_0,y_0) \in I\times J$ be such that: $w_\varepsilon(t_0,x_0,y_0) = \underset{I\times J}{\max}\; w_\varepsilon(t_0,\cdot)$. Then, we have:
\begin{align*}
    &0\leq \partial_t\,w_\varepsilon(t_0,x_0,y_0)\\
    &= - \partial_x m(x_0,y_0) + \frac{r}{2\rho_\varepsilon(t_0)}\displaystyle \iint_{I\times J}\frac{1}{\varepsilon}\left[w_\varepsilon (x_0,y') - w_\varepsilon(x_0,y_0)\right] e^{\Delta_\varepsilon (x',y',x,y,t)}dx'\,dy - 2\|\partial_x m\|_\infty\\
 &\leq -\|\partial_x m\|_\infty <0.
\end{align*}
which is a contradiction. Therefore:
\[\forall t\in[0,t],\,(x,y)\in I\times J, w_\varepsilon(t,x,y) < 0,\] which yields:
\[\partial_x u_\varepsilon(t,x,y) <2\|\partial_x m\|_\infty t +\|\partial_xu^0_\varepsilon\|_\infty + 1. \]
Next, let us consider, for $(t,x,y) \in [0,T]\times I\times J$:
\[v_\varepsilon(t,x,y) = \partial_xu_\varepsilon(t,x,y) + 2 \|\partial_x m\|_\infty\,t + \|\partial_x u^0_\varepsilon\|+1.\]
We can repeat the argument above switching maximum to minimum. First, we have that $v_\varepsilon(0,\cdot,\cdot) >0$. If we assume that there exists $t>0$ such that $\min v_\varepsilon(t,\cdot,\cdot) \leq 0$ and define:
\[t_0 = \inf\{t>0, \min v_\varepsilon(t,\cdot,\cdot) \leq 0\} >0,\]
and $(x_0,y_0)$ realising that minimum, we would have:
\begin{align*}
    &0\geq \partial_t v_\varepsilon(t_0,x_0,y_0)\\
    &= - \partial_x m(x_0,y_0) + \frac{r}{2\rho_\varepsilon(t_0)}\displaystyle \iint_{I\times J}\frac{1}{\varepsilon}\left[w_\varepsilon (x_0,y') - w_\varepsilon(x_0,y_0)\right] e^{\Delta_\varepsilon (x',y',x,y,t)}dx'\,dy + 2\|\partial_x m\|_\infty\\
    &\geq \|\partial_x m\|_\infty >0.
\end{align*}
Which is a contradiction. Thus $v_\varepsilon >0$ and for all $(t,x,y) \in [0,T]\times I\times J$:
\[\partial_x u_\varepsilon > -2\|\partial_x m\|_\infty \,t - \|\partial_x u^0_\varepsilon\|-1.\]
The bound on $\partial_y u_\varepsilon$ can be obtained using similar arguments.

~\paragraph{Uniform $L^\infty$ bounds on $u_\varepsilon$.}
Let us show the following lemma:
\begin{lem}
For any $\delta >0$, there exists $\varepsilon_0 >0$ such that for all $0<\varepsilon\leq \varepsilon_0$:
\[-\delta < \max u_\varepsilon <\delta,\qquad \min u_\varepsilon > -\delta - |I|\,\left(\|\partial_x u_\varepsilon\|_\infty+\|\partial_y u_\varepsilon\|_\infty\right).\]
Hence, $(u_\varepsilon)$ is uniformly bounded for $\varepsilon$ small.
\label{lem:bounds_u}
\end{lem}
\begin{proof}
~\paragraph{1. Bounds on $\max u_\varepsilon$.}
Let $\delta>0$. On the one hand, we have:
\[\rho^-_0 \leq \iint_{I\times J} \frac{\exp\left(\frac{u_\varepsilon(x,y)}{\varepsilon}\right)}{\varepsilon}\,dx\,dy \leq |I|^2\,\frac{\exp\left(\frac{\max u_\varepsilon}{\varepsilon}\right)}{\varepsilon},\]
which leads to:
\[\max u_\varepsilon \geq \varepsilon\,\log\left(\frac{\varepsilon\,\rho^-_0}{|I|^2}\right) \underset{\varepsilon\rightarrow 0}{\longrightarrow} 0.\]
That implies that there exists $\varepsilon_0>0$, such that:
\[\forall \,0<\varepsilon\leq \varepsilon_0, \quad -\delta < \max u_\varepsilon.\]
On the other hand, if $\max u_\varepsilon = u_\varepsilon(x_m,y_m) >0$, then, for all $(x,y)\in I\times J$, we have:
\begin{equation}
\label{eq:lower_bound_u}
    u_\varepsilon (x,y) \geq u_\varepsilon(x_m,y_m) - \left\|\partial_x u_\varepsilon\right\|_\infty |x-x_m| - \left\|\partial_y u_\varepsilon\right\|_\infty |y-y_m|.
\end{equation}
Therefore, using the fact that $u_\varepsilon$ is Lipschitz continuous in space, we obtain, for $(x,y) \in I\times J$ such that $|x-x_m| \leq \frac{u_\varepsilon(x_m,y_m)}{4\left\|\partial_x u_\varepsilon\right\|_\infty},\; |y-y_m| \leq \frac{u_\varepsilon(x_m,y_m)}{4\left\|\partial_y u_\varepsilon\right\|_\infty}$:
\[u_\varepsilon(x,y) \geq  \frac{u_\varepsilon(x_m,y_m)}{2}.\]
We deduce that:
\begin{align*}
    \rho^+_0 \geq \iint_{I\times J} \frac{\exp\left(\frac{u_\varepsilon(x,y)}{\varepsilon}\right)}{\varepsilon}\,dx\,dy &\geq
    \iint_{|x-x_m| \leq \frac{u_\varepsilon(x_m,y_m)}{4\left\|\partial_x u_\varepsilon\right\|_\infty},\; |y-y_m| \leq \frac{u_\varepsilon(x_m,y_m)}{4\left\|\partial_y u_\varepsilon\right\|_\infty}} \frac{\exp\left(\frac{u_\varepsilon(x,y)}{\varepsilon}\right)}{\varepsilon}\,dx\,dy\\&\geq \frac{u_\varepsilon(x_m,y_m)^2}{4\,\left\|\partial_x u_\varepsilon\right\|_\infty\, \left\|\partial_y u_\varepsilon\right\|_\infty}\,\frac{\exp\left(\frac{u_\varepsilon(x_m,y_m)}{2\varepsilon}\right)}{\varepsilon}.
\end{align*}
The latter yields that if $u_\varepsilon(x_m,y_m) \geq \delta$, then:
\[\rho^+_0 \geq \frac{\delta^2}{4\,\left\|\partial_x u_\varepsilon\right\|_\infty\, \left\|\partial_y u_\varepsilon\right\|_\infty}\,\frac{\exp\left(\frac{\delta}{2\varepsilon}\right)}{\varepsilon}\underset{\varepsilon\rightarrow 0}{\longrightarrow} +\infty.\]
Therefore, there exists $\varepsilon_0>0$ such that:
\[\forall \,0<\varepsilon\leq \varepsilon_0,\quad -\delta < \max u_\varepsilon < \delta.\]

\paragraph{2. Bound on $\min u_\varepsilon$.}
From \eqref{eq:lower_bound_u}, for all $(x,y) \in I\times J$, we have:
\[u_\varepsilon(x,y) > \max u_\varepsilon - |I|\left(\left\|\partial_x u_\varepsilon\right\|_\infty+\left\|\partial_y u_\varepsilon\right\|_\infty\right) > -\delta- |I|\left(\left\|\partial_x u_\varepsilon\right\|_\infty+\left\|\partial_y u_\varepsilon\right\|_\infty\right).\]
Thanks to \eqref{eq:lip_space}, the r.h.s is uniformly bounded.
\end{proof}
\end{proof}

\section{Proof of \cref{thm:convergence_u}}
\label{sec:u}
In this section, we provide the proof for the main result of this paper, which is the convergence of $u_\varepsilon$ \bl{toward} a non-positive limit $u$ that separates additively the variables. We also link the support of $n$ to the zeros of $u$ and derive a limit equation.

\paragraph{Limit $\boldsymbol{u}$.}
From \ref{prop:uniform_estimates_u}, there exists $\varepsilon_0>0$ such that $(u_\varepsilon)_{\varepsilon\leq \varepsilon_0}$ is uniformly bounded in $C^0([0,T]\times I\times J)$, and uniformly Lipschitz continuous in space and time. Hence, from the theorem of Arzela-Ascoli, after extraction of a subsequence, $(u_\varepsilon)$ converges uniformly toward a limit $u\in C^0([0,T]\times I\times J)$, that is also Lipschitz continuous.

\paragraph{$\boldsymbol{u(t,x,y) \leq \max u(t,x,\cdot) + \max u(t,\cdot,y)}$.}
From \ref{hyp:nu0} and \ref{prop:nu}, there exists $\nu_m>0$ such that:
\begin{align*}
    \forall (t,x,y) \in [0,T]\times I\times J, \quad \nu_m &\leq \nu_\varepsilon(t,x,y)\\ 
    &= \displaystyle\iint_{I\times J} \frac{1}{\varepsilon\,\rho_\varepsilon}\,\exp\left[{\frac{u_\varepsilon(t,x,y')+u_\varepsilon(t,x',y)-u_\varepsilon(t,x,y)}{\varepsilon}}\right]\bl{dx'\,dy'}\\
    &\leq \frac{|I|^2}{\rho^-_0} \frac{1}{\varepsilon}\,\exp\left[{\frac{\max(u_\varepsilon(t,x,\cdot))+\max u_\varepsilon(t,\cdot,y))-u_\varepsilon(t,x,y)}{\varepsilon}}\right].\\
\end{align*}
Moreover, for all $(t,x,y)\in [0,T]\times I\times J$, and $\delta>0$, there exists $\varepsilon_0>0$ such that for $0<\varepsilon\leq \varepsilon_0$, 
\[\max(u_\varepsilon(t,x,\cdot)) \leq \max(u(t,x,\cdot))+\delta,\quad\max(u_\varepsilon(t,\cdot,y)) \leq \max(u(t,\cdot,y))+\delta.\]
We deduce that, for $0<\varepsilon<\varepsilon_0$:
\begin{align*}
    u_\varepsilon(t,x,y)-u(t,x,y) + \varepsilon\log\left(\varepsilon\,\frac{\rho^-_0\,\nu_m}{|I|^2}\right) -2\delta\leq \max(u(t,x,\cdot))+\max(u(t,\cdot,y)) - u(t,x,y).
\end{align*}
Letting $\delta$ and $\varepsilon$ vanish yields:
\[u(t,x,y)\leq \max(u(t,x,\cdot))+\max(u(t,\cdot,y)).\]

\paragraph{$\boldsymbol{u(t,x,y) \geq \max u(t,x,\cdot) + \max u(t,\cdot,y)}$.}
For $\delta>0$, there exists $\varepsilon_0>0$ such that for $0<\varepsilon\leq \varepsilon_0$, for $(t,x,y)\in [0,T]\times I\times J$,
\[\max(u_\varepsilon(t,x,\cdot)) \geq \max(u(t,x,\cdot))-\delta,\quad\max(u_\varepsilon(t,\cdot,y)) \geq \max(u(t,\cdot,y))-\delta.\]
Let $\varepsilon \leq \varepsilon_0$ and $y_\varepsilon(x)$ be such that: $u_\varepsilon(t,x,y_\varepsilon(x))= \max(u_\varepsilon(t,x,\cdot))$. Since $u_\varepsilon$ is uniformly Lipschitz in space (\ref{prop:uniform_estimates_u}), we can choose $M>0$ such that:
\begin{align*}
    \forall (y,y')\in I\times J,\; |u_\varepsilon(t,x,y)-u_\varepsilon(t,x,y')| \leq M |y-y'|.
\end{align*}
Combining the last two estimations leads to:
\[|y-y_\varepsilon(x)|\leq \frac{\delta}{M} \implies u_\varepsilon(t,x,y') > \max u(t,x,\cdot) - 2\delta.\]
The same holds for $\max(u(t,\cdot,y))$. Hence, from \ref{prop:nu}, there exists $\nu_M$ such that:
\begin{align*}
    \rho_0^+\,\nu_M &\geq \displaystyle\iint_{I\times J} \frac{1}{\varepsilon}\,\exp\left[{\frac{u_\varepsilon(t,x,y')+u_\varepsilon(t,x',y)-u_\varepsilon(t,x,y)}{\varepsilon}}\right]\bl{dx'\,dy'}\\
    &\geq \left(\frac{\delta}{M}\right)^2\frac{1}{\varepsilon}\exp\left[{\frac{\max(u(t,x,\cdot))+\max(u(t,\cdot,y))-4\delta-u_\varepsilon(t,x,y)}{\varepsilon}}\right].
\end{align*}
We next obtain:
\begin{align*}
     \varepsilon\,\log\left(\varepsilon\,\frac{\rho^+_0\nu_M\,M^2}{\delta^2}\right)+4\delta+u_\varepsilon(t,x,y) - u(t,x,y)\geq \max(u(t,x,\cdot))+\max(u(t,\cdot,y))-u(t,x,y).
\end{align*}
Letting $\delta$ and $\varepsilon$ vanish yields:
\[u(t,x,y)\geq \max(u(t,x,\cdot))+\max(u(t,\cdot,y)).\]
{\color{black}This concludes the proof of \eqref{eq:u_additive}.}
\paragraph{$u$ is non-positive.}
This property follows directly {\color{black}from} the uniform convergence of $u_\varepsilon$ \bl{toward} $u$ and the \bl{uniform estimates on $\max(u_\varepsilon)$} from \cref{lem:bounds_u}.

~\paragraph{Support of $n$ and zeros of $u$.}
Let $t\in [0,T]$. Let: $(x_0,y_0) \notin \left\{(x,y)\,|\,u(t,x,y)=0\right\}$. Since $u(t,\cdot,\cdot)$ is \bl{uniformly continuous}, there exists $\delta>0$ such that: $\max\left(|x'-x_0|,|y'-y_0|\right)\leq \delta \implies u(t,x',y') \leq \frac{u(t,x_0,y_0)}{2} \bl{<0}$. Also, \bl{thanks to the strong convergence of $(u_\varepsilon)$ toward $u$}, there exists $\varepsilon_0>0$ such that, for all $\varepsilon<\varepsilon_0$, we have: $\|u_\varepsilon-u\|_\infty \leq \frac{\left|u(t,x_0,y_0)\right|}{4}$. Then, for $\varepsilon<\varepsilon_0$, we have:
\begin{align*}
    \int_{[x_0-\delta,x_0+\delta]\times [y_0-\delta,y_0+\delta]}n_\varepsilon(t,x',y')\,dx'\,dy' &=\int_{[x_0-\delta,x_0+\delta]\times  [y_0-\delta,y_0+\delta]}\frac{e^{\frac{u_\varepsilon(t,x',y')}{\varepsilon}}}{\varepsilon}\,dx'\,dy' \\
    &=\bl{\int_{[x_0-\delta,x_0+\delta]\times  [y_0-\delta,y_0+\delta]}\frac{e^{\frac{u(t,x',y') + \frac{\left|u(t,x_0,y_0)\right|}{4}}{\varepsilon}}}{\varepsilon}\,dx'\,dy'} \\
    &\leq 4\delta^2 \,\frac{e^{\frac{u(t,x_0,y_0)}{4\varepsilon}}}{\varepsilon} \\
    & \underset{\varepsilon \to 0}{\longrightarrow} 0.
\end{align*}
From the weak convergence result of \cref{thm:well_posedness}, $(x_0,y_0) \notin \operatorname{supp}(n(t,\cdot,\cdot))$.
~\paragraph{Limit equation on $u^X$.}
Let $\varepsilon>0$. {\color{black}From the equation \eqref{eq:rhoX_rhoY} verified by $\rho^X_\varepsilon$}, we get, by integration:
\begin{equation}\forall (t,y) \in [0,T]\times {\color{black}J},\quad
\varepsilon\,\log\left(\frac{\rho^X_\varepsilon(t,y)}{\rho^X_  {\varepsilon}(0,y)}\right) = rt- \kappa\int_0^t\rho_\varepsilon(s)ds -\int_0^t \int_I m(x,y)\,\frac{n_\varepsilon(s,x,y)}{\rho^X_\varepsilon(s,y)}dx\,ds.
    \label{eq:eq_rho_X}
\end{equation}
Let us define $\phi^X_\varepsilon \in C\left([0,T]\times I\times J\right)$ by:
\[\phi^X_\varepsilon(t,x,y) = \frac{n_\varepsilon(t,x,y)}{\rho^X_\varepsilon(t,y)}.\]
\paragraph{1. Convergence of $\phi^X_\varepsilon$ to $\phi^X$.}
For $(t,y) \in [0,T]\times {\color{black}J}$, we have:
\[\int_I \phi^X_\varepsilon(t,x',y)\,dx' = 1.\]
Hence, $\left(\phi^X_\varepsilon\right)_{\varepsilon>0}$ is bounded {\color{black}in} $L^\infty([0,T]\times {\color{black}J},L^1(I))$. Thus, there exists a subsequence still denoted $\left(\phi^X_\varepsilon\right)_{\varepsilon>0}$  that converges in $L^\infty(w^*-[0,T]\times {\color{black}J},M(I))$ toward a measure $\phi^X$.
\paragraph{Support of $\phi^X(t,\cdot,y)$.}
\ref{prop:nu} implies that, for $f\in C_c(I,\R_+)$, {\color{black}for a.e. $(t,y)$}
\[\frac{1}{\nu_M\,\rho_0^-}\int_I \rho^Y(t,x)\,f(x)\,dx \leq \left\langle \phi^X(t,\cdot,y),\,f\right\rangle \leq \frac{1}{\nu_m\,\rho_0^+}\int_I \rho^Y(t,x)\,f(x)\,dx.\]
Hence, {\color{black}for a.e. $(t,y)$}, $\phi^X(t,\cdot,y)$ and {\color{black}$\rho^Y(t,\cdot)$} share the same support. As $n(t,\cdot,\cdot)$ is supported at $u(t,\cdot,\cdot)^{-1}\left(\left\{(0,0)\right\}\right) = u^X(t,\cdot)^{-1}\left(\left\{0\right\}\right)\times u^Y(t,\cdot)^{-1}\left(\left\{0\right\}\right)$ {\color{black}for a.e. $t$}, we obtain that {\color{black}$\rho^Y(t,\cdot)$} (and therefore $\phi^X(t,\cdot,y)$) is supported at the zeros of $u^Y(t,\cdot)$.
\paragraph{2. $\boldsymbol{\varepsilon\,\log\left(\rho^X_\varepsilon\right)\underset{\varepsilon\to 0}{\longrightarrow} u^X \in C^0([0,T]\times I).}$}

We fix $\delta>0$ and let $\varepsilon_0>0$ be such that: $\forall \varepsilon< \varepsilon_ 0, \,\left\|u_\varepsilon-u\right\|_\infty \leq \delta$. Next, we compute:
\begin{equation}
\label{eq:logrhoX_uX_1}
   \begin{aligned}
    \varepsilon\,\log\left(\rho^X_\varepsilon(t,y)\right) &= \varepsilon \log\left(\int_I \frac{e^{\frac{u_\varepsilon(t,x,y)}{\varepsilon}}}{\varepsilon}\,dx\right) \\
    &\leq \varepsilon\,\log\left(\int_I \frac{e^{\frac{u(t,x,y)+\delta}{\varepsilon}}}{\varepsilon}\,dx\right) \\
    &= \varepsilon\,\log\left(\int_I \frac{e^{\frac{u^X(t,y)+\delta}{\varepsilon}}}{\varepsilon}\,e^{\frac{u^Y(t,x)}{\varepsilon}}\,dx\right)\\
    &\leq u^X(t,y) + \delta -\varepsilon\,\log(\varepsilon) + \varepsilon\,\log\left(\int_I e^{\frac{u^Y(t,x)}{\varepsilon}}\,dx\right)\\
    &\leq u^X(t,y) + \delta -\varepsilon\,\log(\varepsilon) + \varepsilon\,\log\left(|I|\right).
\end{aligned} 
\end{equation}
Similarly, we have:
\begin{equation}
\label{eq:logrhoX_uX_2}
    \varepsilon\,\log\left(\rho^X_\varepsilon(t,y)\right) \geq u^X(t,y) - \delta -\varepsilon\,\log(\varepsilon) +\varepsilon\,\log\left(\int_I e^{\frac{u^Y(t,x)}{\varepsilon}}\,dx\right).
\end{equation}
For all $t\in[0,T]$, we have shown at the step $(ii)$ that there exists $x_0(t) \in I$ such that $u^Y\left(\bl{t}, x_0(t)\right)=0$. We have therefore the following lower bound:
\begin{align*}
    \varepsilon\,\log\left(\int_I e^{\frac{u^Y(t,x)}{\varepsilon}}\,dx\right)&\geq \varepsilon\,\log\left(\int_{x_0(t)-\varepsilon}^{x_0(t)+\varepsilon} e^{\frac{u^Y(t,x)}{\varepsilon}}\,dx\right)\\
    & \geq \varepsilon\,\log\left(\int_{x_0(t)-\varepsilon}^{x_0(t)+\varepsilon} e^{\frac{-M|x-x_0(t)|}{\varepsilon}}\,dx\right)\\
    & \geq -\varepsilon\,M+\varepsilon\,\log\left(2\,\varepsilon\right)\\
\end{align*}
where the intermediate inequality is obtained due to the fact that there exists $M>0$ such that $u(t,\cdot,\cdot)$ is $M$-lipschitz in space, and thus, so is $u^Y(t,\cdot)$.

The two inequalities \eqref{eq:logrhoX_uX_1} and \eqref{eq:logrhoX_uX_2} above ensure the convergence of $\varepsilon\,\log\left(\rho^X_\varepsilon\right)$ toward $u^X$ uniformly {\color{black}in} $[0,T ]\times {\color{black}J}$.
\paragraph{3. Limit equation.}

For all $(t,y) \in [0,T]\times {\color{black}J}$:
\[\int_{0}^t \int_{I} m(x,y) \phi_\varepsilon^X(s,x,y)\,dx\,ds =
\varepsilon\,\log\left(\frac{\rho^X_\varepsilon(t,y)}{\rho^X_  {\varepsilon}(0,y)}\right) - rt + \kappa\int_0^t\rho_\varepsilon(s)ds.\]
From the strong convergence {\color{black}$\varepsilon\,\log\left(\rho^X_\varepsilon\right)\underset{\varepsilon\to 0}{\longrightarrow} u^X \in C^0([0,T]\times J)$} shown previously, the r.h.s of the equality above converges toward a function in $C^0\left([0,T]\times {\color{black}J}\right)$ as $\varepsilon$ vanishes. 
Hence, $G_\varepsilon := (t,y)\mapsto\int_{0}^t \int_{I} m(x,y) \phi_\varepsilon^X(t,x,y)\,dx\,ds$ converges uniformly toward a function denoted $G(t,y)$ in $C^0\left([0,T]\times {\color{black}J}\right)$.

We aim to show that for all $t\in [0,T]$: \[G(t,\cdot) = y\mapsto \int_{0}^t \left\langle \phi^X(t,\cdot,y),m(\cdot,y)\right\rangle\,ds \in L^\infty({\color{black}J}),\]
which would yield \eqref{eq:limit_equation}. Let $f\in L^1({\color{black}J})$. We have, for $t\in[0,T]$:

\begin{align*}
     &\left|\int_{\color{black}J} \left(G(t,y)-\int_{0}^t \left\langle \phi^X(t,\cdot,y),m(\cdot,y)\right\rangle\,ds\right)\,f(y)\,dy\right| \\
     &\leq \left\|G-G_\varepsilon\right\|_\infty\,\left\|f\right\|_1 + \left|\int_0^T\int_{ J}\left\langle\phi^X_\varepsilon(s,\cdot,y) - \phi^X(s,\cdot,y),\,m(\cdot,y)\right\rangle\mathbf{1}_{[0,t]}(s)\,f(y)ds\,dy\right|.
\end{align*}
The first term vanishes because of the uniform convergence of $G_\varepsilon$ to $G$. The second term does the same because of the weak convergence of $\phi^X_\varepsilon$ to $\phi^X$ in $L^\infty(w^*-[0,T]\times {\color{black}J},M(I))$ applied to $(s,x,y) \mapsto \mathbf{1}_{[0,t]}(s)f(y)m(x,y) \in L^1([0,T]\times {\color{black}J},C^0(I))$, since $f \in L^1({\color{black}J})$ and $m\in C^1(I\times J)$. We obtain that for all $t\in[0,T]$, $f\in L^1({\color{black}J})$:

\[\int_{\color{black}J} \left(G(t,y)-\int_{0}^t \left\langle \phi^X(t,\cdot,y), m(\cdot,y)\right\rangle\,ds\right)\,f(y)\,dy= 0.\]
{\color{black}Therefore}, we deduce that {for all $t\in[0,T]$, \color{black}for a.e $y$}, $G(t,y) = \int_{0}^t \left\langle \phi^X(t,\cdot,y),m(\cdot,y)\right\rangle\,ds$.

\section{Convergence of $(\rho_\varepsilon)$ toward a BV function: proof of \cref{thm:BV}}
\label{sec:BV}
In this section, we provide the proof of \cref{thm:BV} under the additional hypothesis that the selection function $m$ is additive \eqref{eq:m_add}.

Let $\varepsilon>0$. First, we have, for {\color{black}all} $t\in[0,T]$:
\begin{align*}
    \int_0^t \left|\frac{d\rho_\varepsilon}{dt}(s)\right|\,ds &= \int_0^t \frac{d\rho_\varepsilon}{dt}(s)\,ds + 2\int_0^t \left(\frac{d\rho_\varepsilon}{dt}\right)_- (s)\,ds\\
    &= \rho_\varepsilon(t) - \rho_\varepsilon(0) + 2\int_0^t \left(\frac{d\rho_\varepsilon}{dt}\right)_- (s)\,ds\\
    &\leq \rho_0^+ + 2\int_0^t \left(\frac{d\rho_\varepsilon}{dt}\right)_- (s)\,ds,
\end{align*}
using the estimates of \cref{lem:TM}. Let us define: \[\mathcal{I}_\varepsilon(t) := \frac{d\rho_\varepsilon}{dt}(t) = \frac{r-\kappa\,\rho_\varepsilon(t)}{\varepsilon}\,\rho_\varepsilon(t) - \iint_{I\times J}m(x,y)\,\frac{n_\varepsilon(t,x,y)}{\varepsilon}\,dx\,dy.\]
To prove that $\rho_\varepsilon$ is locally uniformly bounded in $W^{1,1}([0,T])$, it is sufficient to give an upper bound on $\int_0^t\left(\mathcal{I}_\varepsilon\right)_-\,ds$. To this end, let us notice that {\bl{for a.e. $t$}}:
\begin{equation*}
    \frac{d\left(\mathcal{I}_\varepsilon\right)_-}{dt} (t) = -\mathbf{1}_{\mathcal{I}_\varepsilon\leq0}\,\frac{d\mathcal{I_\varepsilon}}{dt}.
\end{equation*}
We deduce that, \bl{for a.e. $t$}
\begin{equation}
\label{eq:aux_I}
    \begin{aligned}
    \frac{d\left(\mathcal{I}_\varepsilon\right)_-}{dt} (t) &= -\frac{d\mathcal{I_\varepsilon}}{dt}(t)\,\mathbf{1}_{\{\mathcal{I_\varepsilon}(t)\leq 0\}}\\
    &= -\left[\frac{r-2\,\kappa\,\rho_\varepsilon}{\varepsilon}\,\mathcal{I_\varepsilon}(t) -\iint_{I\times J}m(x,y)\,\frac{\partial_t n_\varepsilon(t,x,y)}{\varepsilon}\,dx\,dy\right]\,\mathbf{1}_{\{\mathcal{I_\varepsilon}(t)\leq 0\}}\\
    & = \left[\frac{r-2\,\kappa\,\rho_\varepsilon}{\varepsilon}\,\left(\mathcal{I_\varepsilon}\right)_-(t)\right.\\
    &\left.+\iint_{I\times J}m(x,y)\,\frac{1}{\varepsilon^2}\left(\frac{r}{2}\left[\frac{\rho_\varepsilon^X\,\rho_\varepsilon^Y}{\rho_\varepsilon}+n_\varepsilon\right] - \kappa\,\rho_\varepsilon\,n_\varepsilon -m(x,y)\,n_\varepsilon\right)\,dx\,dy\right]\,\mathbf{1}_{\{\mathcal{I_\varepsilon}(t)\leq 0\}}.
\end{aligned}
\end{equation}
Let us show that the following term is non positive:
\begin{align*}
   &\mathbf{1}_{\{\mathcal{I_\varepsilon}(t)\leq 0\}} \iint_{I\times J}m(x,y)\,\left(\frac{r}{2}\left[\frac{\rho_\varepsilon^X\,\rho_\varepsilon^Y}{\rho_\varepsilon}+n_\varepsilon\right] - \kappa\,\rho_\varepsilon\,n_\varepsilon -m(x,y)\,n_\varepsilon\right)\,dx\,dy \\&=\mathbf{1}_{\{\mathcal{I_\varepsilon}(t)\leq 0\}}\left[ \frac{r}{2}\iint_{I\times J}m(x,y)\frac{\rho_\varepsilon^X\,\rho_\varepsilon^Y}{\rho_\varepsilon} dx\,dy +\left(\frac{r}{2}-\kappa\rho_\varepsilon\right)\iint_{I\times J}m(x,y)\,n_\varepsilon dx dy- \iint_{I\times J}m^2(x,y)\,n_\varepsilon \,dx\,dy\right].
\end{align*}
On the one hand, from the Cauchy-Schwartz inequality, we get:
\[-\rho_\varepsilon\,\iint_{I\times J}m^2(x,y)\,n_\varepsilon\,dx\,dy=-\iint_{I\times J}\sqrt{n_\varepsilon}^2\iint_{I\times J} \left(m(x,y)\sqrt{n_\varepsilon}\right)^2 dx\,dy \leq - \left(\iint_{I\times J} m(x,y)\,n_\varepsilon\,dx\,dy\right)^2.\]
On the other hand, thanks to the additional hypothesis on $m$ \eqref{eq:m_add}, we have
\begin{align*}
    \iint_{I\times J}m(x,y)\frac{\rho_\varepsilon^X\,\rho_\varepsilon^Y}{\rho_\varepsilon}dx\,dy&= \iint_{I\times J}\left[m^X(x)+m^Y(y)\right]\frac{\rho_\varepsilon^X(y)\,\rho_\varepsilon^Y(x)}{\rho_\varepsilon}dx\,dy \\
    &=\int_{I}m^X(x)\rho^Y_\varepsilon(x)\,dx + \int_{I}m^Y(y)\rho^X_\varepsilon(y)\,dy\\
    &=\iint_{I\times J}m^X(x)\,n_\varepsilon(x,y)\,dx\,dy +\iint_{I\times J}m^Y(y)\,n_\varepsilon(x,y)\,dx\,dy\\
    &= \iint_{I\times J}m(x,y)\,n_\varepsilon(x,y)\,dx\,dy.
\end{align*}
We deduce that
\begin{align*}
   &\mathbf{1}_{\{\mathcal{I_\varepsilon}(t)\leq 0\}} \iint_{I\times J}m(x,y)\,\left(\frac{r}{2}\left[\frac{\rho_\varepsilon^X\,\rho_\varepsilon^Y}{\rho_\varepsilon}+n_\varepsilon\right] - \kappa\,\rho_\varepsilon\,n_\varepsilon -m(x,y)\,n_\varepsilon\right)\,dx\,dy\\
   &\leq \mathbf{1}_{\{\mathcal{I_\varepsilon}(t)\leq 0\}} \left[r\iint_{I\times J}m\,n_\varepsilon-\kappa\,\rho_\varepsilon\iint_{I\times J}m\,n_\varepsilon - \frac{1}{\rho_\varepsilon}\left(\iint_{I\times J} m(x,y)\,n_\varepsilon\,dx\,dy\right)^2\right]
   \\&\leq\mathbf{1}_{\{\mathcal{I_\varepsilon}(t)\leq 0\}} \frac{\iint_{I\times J}m\,n_\varepsilon\,dx\,dy}{\rho_\varepsilon}\left[(r-\kappa\,\rho_\varepsilon)\,\rho_\varepsilon - \iint_{I\times J}m\,n_\varepsilon\,dx\,dy\right]\\
   &\leq \mathbf{1}_{\{\mathcal{I_\varepsilon}(t)\leq 0\}} \frac{\iint_{I\times J}m\,n_\varepsilon\,dx\,dy}{\rho_\varepsilon}\,\varepsilon\,\mathcal{I}_\varepsilon(t) \leq 0.
\end{align*}
Consequently, \eqref{eq:aux_I} implies the following inequality
\begin{align*}
    \frac{d\left(\mathcal{I}_\varepsilon\right)_-}{dt} (t) &\leq\frac{r-2\,\kappa\,\rho_\varepsilon}{\varepsilon}\,\left(\mathcal{I_\varepsilon}\right)_-(t)\\
    &\leq \frac{r-2\,\kappa\,\rho_0^-}{\varepsilon}\,\left(\mathcal{I_\varepsilon}\right)_-(t)\\
    &= \frac{2\|m\|_\infty - r}{\varepsilon}\,\left(\mathcal{I_\varepsilon}\right)_-(t).
\end{align*}
Let us define $\delta := r- 2\|m\|_\infty >0$ from \ref{hyp:m}. From the last differential inequality, we deduce that
\begin{equation}
    \left(\mathcal{I}_\varepsilon\right)_- (t) \leq \left(\mathcal{I}_\varepsilon\right)_- (0)\,e^{-\frac{\delta}{\varepsilon}t},
    \label{eq:I_relaxation}
\end{equation}
which concludes the first part of the proof.

For the last part of the theorem, recall that $I_\varepsilon(t) = \frac{d\rho_\varepsilon}{dt}(t)$. Then, the inequality \eqref{eq:I_0} implies that there exists $C>0$ such that
\begin{equation*}
    {\color{black}\left(\mathcal{I}_{\varepsilon}(0)\right)_-} \leq C\,{\color{black}\frac{e^{\frac{o(1)}{\varepsilon}}}{\varepsilon}}.
\end{equation*}
As a corollary of \eqref{eq:I_relaxation}, we obtain that for all $t\in[0,T]$
\begin{equation*}
    \left(\mathcal{I}_\varepsilon\right)_- (t) \leq C\,{\color{black}\frac{e^{\frac{o({\color{black}1})-\delta{\color{black}t}}{\varepsilon}}}{\varepsilon}}.
\end{equation*}
We deduce that the limit $\rho$ is non decreasing. 
\appendix

{\color{black}\section{Proof of \cref{lem:TM}.}
\label{app:proof_lem_TM}
\begin{proof}
[Proof of \cref{lem:TM}]
\item\paragraph{1. Bounds on $\rho_\varepsilon$.}
Integrating \eqref{P_n} leads to $\rho_\varepsilon$ being solution of:
\begin{equation}
    \begin{aligned}
    \begin{cases}
     \varepsilon\, \partial_t \rho_\varepsilon = \left(r - \kappa\,\rho_\varepsilon\right)\rho_\varepsilon - \iint_{I\times J} m(x,y)\,n_\varepsilon(t,x,y) \,dx\,dy\bl{,}\\
     \rho(0) = \rho^0_\varepsilon. 
    \end{cases}
    \end{aligned}
\end{equation}
Since $m, n_\varepsilon\geq 0$, we get that $\rho_\varepsilon$ is a subsolution of the Cauchy problem:
\begin{equation*}
\begin{aligned}
\begin{cases}
\varepsilon \,d_t \tilde{\rho}_\varepsilon = \left(r-\kappa\,\tilde{\rho}_\varepsilon\right)\,\tilde{\rho}_\varepsilon,\\
\tilde{\rho}_\varepsilon(0) = \rho^0_\varepsilon.
\end{cases}
\end{aligned}
\end{equation*}
whose solution is:
\[\forall t\geq 0, \quad \tilde{\rho}_\varepsilon(t) = \frac{1}{\frac{e^{-\frac{rt}{\varepsilon}}}{\rho_\varepsilon^0}+\frac{\kappa}{r}\left(1-e^{-\frac{rt}{\varepsilon}}\right)} \leq \max\left(\rho^0_\varepsilon,\,\frac{r}{\kappa}\right),\]
since $\rho^0_\varepsilon \geq 0$ from assumptions. Using the comparison principle, we obtain 
\[\forall t\geq 0, \quad\rho_\varepsilon(t) \leq \tilde{\rho}_\varepsilon(t)\leq \rho_0^+.\]
Similarly, we get:
\[\forall t \geq 0, \quad\rho_0^-=\leq\min\left(\rho^0_\varepsilon,\,\frac{r-\|m\|_\infty}{\kappa}\right)\leq\rho_\varepsilon(t).\]

\item\paragraph{2. Bounds on $\rho^{X}_\varepsilon,\rho^{Y}_\varepsilon.$}

Integrating \eqref{P_n} {\color{black}with regard to $x$} leads to:
\begin{equation*}
    \begin{aligned}
    \begin{cases}
     \varepsilon\, \partial_t \rho^X_\varepsilon = \left(r - \kappa\,\rho_\varepsilon\right)\rho^X_\varepsilon - \int_{I} m(x,y)\,n_\varepsilon(t,x,y) \,dx\leq r\rho^X_\varepsilon,\\
     \rho^X(0) = \rho^{X,0}_\varepsilon. 
    \end{cases}
    \end{aligned}
\end{equation*}
The upper bound on $\rho^X_\varepsilon$ is then obtained by comparison with $\rho^{X,0}_\varepsilon\,e^{\frac{r\,t}{\varepsilon}}$. The upper bound on $\rho^Y_\varepsilon$ can be proved using similar arguments.

\item\paragraph{3. Bound on $n_\varepsilon$.}

From Duhamel's formula, we obtain, for all $0\leq t\leq T, (x,y)\in I\times J$:
\begin{equation*}
    n_\varepsilon(t,x,y) = n^0_{\varepsilon}(x,y)e^{-\int_0^t\frac{m+\kappa\,\rho_\varepsilon(s)-\frac{r}{2}}{\varepsilon}ds} + \frac{r}{2\varepsilon}\int_0^t \frac{\rho^X_\varepsilon(y,\tau)\rho^Y_\varepsilon(x,\tau)}{\rho_\varepsilon(\tau)}e^{-\int_\tau^t\frac{m+\kappa\,\rho_\varepsilon(s)-\frac{r}{2}}{\varepsilon}ds}d\tau.
\end{equation*}
Hence, using the bounds on $\rho^X_\varepsilon$ and $\rho^Y_\varepsilon$ from \eqref{eq:bornes_rho}, we deduce that:
\begin{equation*}
\begin{aligned}
 \left\|n_\varepsilon(t,\cdot,\cdot)\right\|_\infty &\leq e^{\frac{rt}{2\varepsilon}}\left\|n^0_\varepsilon\right\|_\infty+\frac{\|\rho^{X,0}_\varepsilon\|_\infty\,\|\rho^{Y,0}_\varepsilon\|_\infty}{\rho_0^-}\int_0^t\frac{r}{2\varepsilon} e^{\frac{2r\tau}{\varepsilon}}e^\frac{r(t-\tau)}{2\varepsilon}d\tau \\
 & \leq e^{\frac{rt}{2\varepsilon}}\left\|n^0_\varepsilon\right\|_\infty+\frac{\|\rho^{X,0}_\varepsilon\|_\infty\,\|\rho^{Y,0}_\varepsilon\|_\infty}{\rho_0^-}e^{\frac{2rt}{\varepsilon}}\left(e^\frac{rt}{2\varepsilon}-1\right)
 \\
 & \bl{\leq N_T.}
\end{aligned}
\end{equation*}

\item\paragraph{4. Bound on $\partial_x n_\varepsilon$ and $\partial_y n_\varepsilon$.}

We differentiate \ref{P_n} with respect to $x$ to obtain:
\begin{equation*}
    \begin{aligned}
    \begin{cases}
     \varepsilon\, \partial_t \partial_x n_\varepsilon = \frac{r}{2}\,\left[\frac{\partial_x \rho^Y_{\varepsilon}(x)\,\rho^X_{\varepsilon}(y)}{\rho_\varepsilon}+\partial_x n_\varepsilon(x,y)\right] - \partial_x m(x,y)\,n_\varepsilon - \left(m+\kappa\,\rho_\varepsilon\right)\partial_x n_\varepsilon,\\
     \partial_x n_\varepsilon(0) = \partial_x n^{0}_\varepsilon. 
    \end{cases}
    \end{aligned}
\end{equation*}
Putting the latter under integral form yields:
\[\partial_x n_\varepsilon = \partial_x n^0_\varepsilon\,e^{-\int_0^t\frac{m+\kappa\,\rho_\varepsilon(s)}{\varepsilon}ds}+\frac{1}{\varepsilon} \,\int_0^t \left(\frac{r}{2}\left[\frac{\partial_x \rho^Y_{\varepsilon}(x)\,\rho^X_{\varepsilon}(y)}{\rho_\varepsilon}+\partial_x n_\varepsilon(x,y)\right]-n_\varepsilon\,\partial_x m\right)e^{-\int_\tau^t\frac{m+\kappa\,\rho_\varepsilon(s)}{\varepsilon}ds}d\tau.\]
Hence, by first using the previous bounds \eqref{eq:bornes_rho} {\color{black}and \eqref{eq:borne_n}} and next \bl{using Gronwall's inequality} on $t\mapsto \left\|\partial_x n_\varepsilon(t,\cdot,\cdot)\right\|_\infty$ (second line), we obtain that:
\begin{align*}
    \left\|\partial_x n_\varepsilon(t,\cdot,\cdot)\right\|_{{\color{black}\infty}} &\leq \left\|\partial_x n_\varepsilon^0\right\|_\infty + N_{T_M} \frac{t}{\varepsilon}\left\|\partial_x m\right\|_\infty + \int_0^t\frac{r}{2\varepsilon}\left[\frac{\left\|\rho_\varepsilon^{X,0}\right\|_\infty\,|J|}{\rho_0^-}e^{\frac{r\,\tau}{\varepsilon}}+1\right]\left\|\partial_x n_\varepsilon(\tau,\cdot,\cdot)\right\|_\infty\,d\tau\\
    &\leq \left(\|\partial_x n_\varepsilon^0\|_\infty+N_{T_M}\,\frac{t}{\varepsilon}\left\|\partial_x m\right\|_\infty\right)\,e^{\frac{\left\|\rho_\varepsilon^{X,0}\right\|_\infty\,|J|}{2\rho_0^-}e^{\frac{r\,t}{\varepsilon}}+\frac{rt}{2\varepsilon}}\\
    &\bl{\leq N^X_T}.
\end{align*}
\bl{The bound on $\partial_y n_\varepsilon(t,\cdot,\cdot)$ is obtained similarly.}
\end{proof}}
\section{Proof of \cref{lem:contraction}}
\label{app:proof_lem_contraction}
\bl{\begin{proof}[Proof of \cref{lem:contraction}.]

Let us recall the definition of $\mathcal{A}^0_\varepsilon$
\begin{align*}
    \mathcal{A}^0 = & \left\{n \in \bl{C^1(I\times J,\, \R_+)}\,|\, \rho(n)\in[\rho_0^-,\rho_0^+]\right\},
\end{align*}
and the definition of $f$
\begin{equation*}
f : 
\begin{cases}
\begin{aligned}
 &\mathcal{A}^0  \rightarrow  \mathcal{X} \\
 &n  \mapsto  f(n) : (x,y) \mapsto \frac{r}{2}\frac{\rho^X(n)(y) \, \rho^Y(n)(x)}{\rho(n)} - \left[m(x,y)+ \kappa \rho(n) - \frac{r}{2}\right]n(x,y).
\end{aligned}
\end{cases}
\end{equation*}

From \ref{hyp:m} ($C^1$-regularity of $m$ on $I\times J$) and because bounded linear functionals are Lipschitz, the only terms in the expression of $f$ for which the Lipschitz bound requires additional computations are $\tilde{f}: n \mapsto \left[(x,y) \mapsto \frac{\rho^X(n)(y) \, \rho^Y(n)(x)}{\rho(n)}\right]$ and $\hat{f} : n \mapsto \rho(n)n$.

Let $\eta>0$ and $n$ and $\Tilde{n}$ be in $\mathcal{A}^0$ such that $\max\left(\|n\|_\mathcal{X},\|\tilde{n}\|_\mathcal{X}\right)<\eta$. We compute
\begin{align*}
    \left\|\hat{f}(n) - \hat{f}(\tilde{n})\right\|_\mathcal{X} &= \left\|\rho(n) n - \rho(\tilde{n})\tilde{n}\right\|_\mathcal{X}\\
    &\leq \left\|\rho(n) (n - \tilde{n})\right\|_\mathcal{X} + \left\|\tilde{n} (\rho(n) - \rho(\tilde{n}))\right\|_\mathcal{X}\\
     &\leq  \rho(n) \left\|n - \tilde{n}\right\|_\mathcal{X} + \left\|\tilde{n}\right\|_\mathcal{X} \left\|\rho(n) - \rho(\tilde{n})\right\|_\mathcal{X}\\
    &\leq \left(\rho_0^+ + \eta\right)\left\|n - \tilde{n}\right\|_\mathcal{X},
    \end{align*}
    where we used the structure of Banach algebra of $(\mathcal{X},\|\cdot\|_\mathcal{X})$ at the third line. Similarly, we compute
\begin{align*}
    \left\|\tilde{f}(n) - \tilde{f}(\tilde{n})\right\|_\mathcal{X} &=\left\|\frac{\rho^X(n)\, \rho^Y(n)}{\rho(n)} - \frac{\rho^X(\tilde{n}) \, \rho^Y(\tilde{n})}{\rho(\tilde{n})}\right\|_\mathcal{X}\\
    &\leq \left\|\left(\rho^X(n)-\rho^X(\tilde{n})\right)\frac{\rho^Y(n)}{\rho(n)}\right\|_\mathcal{X} +  \left\|\rho^X(\tilde{n})\left(\frac{\rho^Y(n)}{\rho(n)}-\frac{\rho^Y(\tilde{n})}{\rho(\tilde{n})}\right)\right\|_\mathcal{X}\\
    &\leq \left\|\rho^X(n)-\rho^X(\tilde{n})\right\|_\mathcal{X}\left\|\frac{\rho^Y(n)}{\rho(n)}\right\|_\mathcal{X} +  \left\|\rho^X(\tilde{n})\right\|_\mathcal{X}\left\|\frac{\rho^Y(n)}{\rho(n)}-\frac{\rho^Y(\tilde{n})}{\rho(\tilde{n})}\right\|_\mathcal{X},\\
    &\leq \frac{|I||J|\eta}{\rho_0^-}\left\|n-\tilde{n}\right\|_\mathcal{X}  \\
    &+\left\|\rho^X(\tilde{n})\right\|_\mathcal{X}\left(\frac{\left\|\rho^Y(n)-\rho^Y(\tilde{n})\right\|_\mathcal{X}}{\rho(n)} + \frac{\left\|\rho^Y(\tilde{n})\right\|_\mathcal{X}}{\rho(n)\rho(\tilde{n})}\left\|\rho(n)-\rho(\tilde{n})\right\|_\mathcal{X}\right),\\
    &\leq \left( 2\frac{|I||J|\eta}{\rho_0^-} +\left[\frac{|I||J|\eta}{\rho_0^-}\right]^2 \right)\left\|n-\tilde{n}\right\|_\mathcal{X},
\end{align*}
where we used the structure of Banach algebra of $(\mathcal{X},\|\cdot\|_\mathcal{X})$ at the third and fourth line. Consequently, we obtain that $f$ is locally Lipschitz on $\mathcal{A}^0$.
\end{proof}}

\section*{Acknowledgments}

Both authors thank Vincent Calvez for the introduction of the model and fruitful discussions and Denis Roze for valuable discussions about the biological motivations.
L.D. thanks also Sarah Otto for enlightening initial conversations.   Both authors have received partial funding from the
ANR project DEEV ANR-20-CE40-0011-01 and the chaire Modélisation Mathématique et Biodiversité of
Véolia Environment - École Polytechnique - Museum National d'Histoire Naturelle - Fondation X. L.D. has received funding through a Mitacs Globalink Research Award.
\printbibliography
\end{document}